\documentclass[a4paper,11pt]{article}
\usepackage{amsmath,amssymb}
\usepackage{epsfig}
\usepackage{algorithm,algorithmic}
\usepackage{amsthm}
\usepackage{geometry}
\usepackage{lineno}

\makeatletter
\@addtoreset{equation}{section}

\makeatother

\geometry{left=27mm,right=27mm,top=27mm,bottom=27mm}

%

\numberwithin{equation}{section}
\newtheorem{thm}{\bfseries Theorem}[section]
\newtheorem{lem}[thm]{\bfseries Lemma}       
\newtheorem{prop}[thm]{\bfseries Proposition} 
\newtheorem{cor}[thm]{\bfseries Corollary}     
\theoremstyle{definition}
\newtheorem{algo}[thm]{\bfseries Algorithm}
\newtheorem{ex}[thm]{\bfseries Example}
\newtheorem{rem}[thm]{\bfseries Remark}

\title{A representation of antimatroids by Horn rules and its application to educational systems} 
\author{Hiyori Yoshikawa\footnote{Department of Mathematical Informatics, 
		Graduate School of Information Science and Technology,   
		University of Tokyo, Tokyo, 113-8656, Japan.
		\texttt{hyr480.yskw1103@gmail.com}}
	\quad 
Hiroshi Hirai\footnote{
	Department of Mathematical Informatics, 
	Graduate School of Information Science and Technology,   
	University of Tokyo, Tokyo, 113-8656, Japan.
	\texttt{hirai@mist.i.u-tokyo.ac.jp}}
 \quad
Kazuhisa Makino\footnote{
Research Institute for Mathematical Science,
Kyoto University,
Kyoto 606--8502, Japan
\texttt{makino@kurims.kyoto-u.ac.jp}}}

\begin{document}
	
\maketitle

\begin{abstract}
We study a representation of an antimatroid by Horn rules, 
motivated by its recent application to computer-aided educational systems. 
We associate any set $\mathcal{R}$ of Horn rules with 
the unique maximal antimatroid $\mathcal{A}(\mathcal{R})$ that is contained in
the union-closed family $\mathcal{K}(\mathcal{R})$ naturally determined by ${\cal R}$.
We address algorithmic and Boolean function theoretic aspects 
on the association ${\cal R} \mapsto \mathcal{A}(\mathcal{R})$, 
where ${\cal R}$ is viewed as the input.
We present linear time algorithms to solve the membership problem 
and the inference problem for ${\cal A}({\cal R})$. 
We also provide efficient algorithms for generating
all members and all implicates of ${\cal A}({\cal R})$.
We show that this representation is essentially equivalent to 
the Korte-Lov\'{a}sz representation of antimatroids by rooted sets.
Based on the equivalence,  
we provide a quadratic time algorithm to construct 
the uniquely-determined minimal representation.
These results have potential applications to computer-aided educational systems, 
where an antimatroid is used as a model of the space of possible knowledge states of learners, 
and is constructed by giving Horn queries to a human expert. 
\end{abstract}

\begin{quote}
{\bf Keywords:} Antimatroids; Horn rules; Implicational systems; Learning spaces; Knowledge spaces; Educational systems
\end{quote}

\section{Introduction}

An {\em antimatroid} is a family $\mathcal{K}$ of subsets of a finite set $Q$ satisfying the following conditions: 
\begin{description}
\item[(Union-closedness)] For members $X,Y$ of $\mathcal{K}$, the union $X\cup Y$ is also a member of $\mathcal{K}$.
\item[(Accessibility)] For every nonempty member $X$ of $\mathcal{K}$, there exists an element $x$ in $X$ such that $X\setminus \{x\}$ is a member of $\mathcal{K}$.
\end{description}
(We do not impose the usual condition $Q \in {\cal K}$.)
An antimatroid (or its dual, {\em convex geometry}) is an axiomatic abstraction of a finite point set in Euclidean space, and 
ubiquitously arises from various areas of discrete mathematics and theoretical computer science.
Examples appear from graph search, tree shelling, posets, and so on~\cite{Dietrich89,EdelmanJamison85}. 
One of the major applications of antimatroids is 
the analysis of greedily solvable structures in combinatorial optimization; 
see~\cite{KLS1991}.

A remarkable application of antimatroids has been emerging 
from the design of computer-aided education systems~\cite{DF1999,FADEX13, FD2011}.
In {\em Knowledge Space Theory (KST)}, 
an antimatroid is called a {\em learning space}, whereas a union-closed family is called a {\em knowledge space}.
They are used as mathematical models of the space of all possible knowledge states of learners, 
where the ground set $Q$ is a set of questions and 
the knowledge state of a learner is associated with 
the subset of questions that he/she answers correctly.
KST-based educational systems have already been in practical use, e.g., ALEKS\footnote{http://www.aleks.com/.}.  
In this application, the size of antimatroids can be quite large. 
Thus efficient representation, construction, and implementation of antimatroids are of great importance.

In the literature of KST, 
Dowling-M\"{u}ller~\cite{Dowling93, M1989} and Koppen and Doignon~\cite{KD1990} 
independently introduced a rule-based representation of union-closed families 
by certain binary relations, called {\em entailments}.
They established a Galois connection between union-closed families and entailments. 
In fact, their result may be viewed as a sharpening of 
the representation of families of 
Horn rules (or Horn formulas), 
a well-known concept in artificial intelligence and 
Boolean function theory; see \cite[Chapter 6]{Booleanfunctions}.
By a {\em Horn rule} (or {\em rule})
we mean a pair $(A,q)$ of a set $A \subseteq Q$ and an element $q$ in $Q$.
We say that {\em a rule $(A,q)$ accepts 
a subset $X$} if $q\in X$ implies $X\cap A\neq\emptyset$.  
For a set $\mathcal{R}$ of rules, let $\mathcal{K}(\mathcal{R})$ denote the set of subsets
accepted by all rules in $\mathcal{R}$. 
A classical result~\cite{Horn51,M1943} in formal logic says that 
a family ${\cal K}$ of subsets (including $\emptyset$) 
is union-closed if and only if it is represented 
by a set ${\cal R}$ of Horn rules as ${\cal K} = {\cal K}({\cal R})$; see~\cite[Theorem 6.6]{Booleanfunctions}.
 {\em Theory of implicational systems}~\cite{BM10,CM03,Wild94,Wild14survey} provides 
 a unified and systematic approach to this classical result, generalizations, and ramifications, 
 obtained in different fields of mathematical sciences 
(e.g., formal logic, lattice theory, formal concept analysis, relational database, and KST).
In this theory,  a Horn rule is called a {\em unit implication}, 
and a set ${\cal R}$ of Horn rules with ${\cal K} = {\cal K}({\cal R})$
is called a {\em unit implicational basis} of~${\cal K}$.

By definition, an antimatroid is a union-closed family.
Thus an antimatroid ${\cal A}$ can be realized in computers 
by maintaining some implicational base ${\cal R}$ of ${\cal A} = {\cal K}({\cal R})$.
Various operations on ${\cal K}({\cal R})$ 
can be efficiently conducted by accessing ${\cal R}$.
In practice, 
a large family can often be represented by a small set of rules.
Also in the literature of implicational systems,
several  ``useful" or ``compact" implicational bases 
of antimatroids (or convex geometries) and related closure systems
have been investigated; see \cite{Adaricheva12, AN14, ANR13, KN13, Wild94}.
However this way of representing an antimatroid by ${\cal K}({\cal R})$ has one obvious drawback:
Not every set of rules corresponds to an antimatroid. 
Therefore we need a special care to keep ${\cal K}({\cal R})$ 
an antimatroid when ${\cal R}$ is frequently varied by addition/deletion. 
Such a situation naturally occurs in the design of KST-based educational systems, 
and this drawback has been one of main difficulties for practical use of antimatroids.

In this paper, we overcome this drawback by another way of associating 
any set of rules with an antimatroid.
Our starting point is the following:
\begin{quote}
	{\em There exists a unique maximal antimatroid contained in any union-closed family.}
\end{quote}
This fundamental fact was recently noticed by Doignon~\cite[p. 14]{Doignon14ICFCA} in KST, 
and is a direct corollary of a classical result~\cite[Theorem 2.2]{Edelman80} of
Edelman that for two antimatroids ${\cal A}, {\cal A}'$
in the same ground set, the family 
$\{ X \cup X' \mid X \in {\cal A}, X' \in {\cal A}'\}$ is again an antimatroid.
We will use the following explicit characterization of this maximal antimatroid.
For finite sets $X$ and $Y$ with $Y\subseteq X$, a {\em tight path from $Y$ to $X$} is a sequence $Y = Y_{0}, Y_{1},\ldots , Y_{k} =X$ of subsets of $Q$ satisfying $Y_{i}\subseteq Y_{i+1}$ and $|Y_{i+1}\setminus Y_{i}|=1$ for $i=0, \ldots , k-1$. For a union-closed family $\mathcal{K}$ on $Q$, let $\check{\mathcal{K}}$ 
denote the family of subsets $K$ in $\mathcal{K}$ 
such that there is a tight path from $\emptyset$ to $K$ in $\mathcal{K}$. 
Then it holds:
%
\begin{thm}\label{thm:uniqueness}
For a union-closed family $\mathcal{K}$, the family $\check{\mathcal{K}}$ 
is the unique maximal antimatroid contained in $\mathcal{K}$. 
\end{thm}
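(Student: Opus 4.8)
The plan is to establish three things: (i) $\check{\mathcal{K}}$ is an antimatroid, (ii) $\check{\mathcal{K}} \subseteq \mathcal{K}$, and (iii) every antimatroid $\mathcal{A} \subseteq \mathcal{K}$ satisfies $\mathcal{A} \subseteq \check{\mathcal{K}}$; together these say $\check{\mathcal{K}}$ is the unique maximal antimatroid in $\mathcal{K}$. Item (ii) is immediate from the definition, since a tight path from $\emptyset$ to $K$ in $\mathcal{K}$ in particular has all its members in $\mathcal{K}$, so $K \in \mathcal{K}$. Item (iii) is also short: if $\mathcal{A} \subseteq \mathcal{K}$ is an antimatroid and $K \in \mathcal{A}$, then repeatedly applying accessibility inside $\mathcal{A}$ produces a tight path $\emptyset = Y_0, Y_1, \ldots, Y_k = K$ with every $Y_i \in \mathcal{A} \subseteq \mathcal{K}$; hence $K \in \check{\mathcal{K}}$. (One should check $\emptyset \in \mathcal{A}$, which follows from accessibility applied down to the empty set together with $\mathcal{A}$ being nonempty, or note that the claim is vacuous if $\mathcal{A} = \emptyset$.)

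The substance is item (i): verifying that $\check{\mathcal{K}}$ is union-closed and accessible. Accessibility is the easy half --- if $K \in \check{\mathcal{K}}$ is nonempty, take a tight path $\emptyset = Y_0, \ldots, Y_k = K$ witnessing membership; then $Y_{k-1} = K \setminus \{x\}$ for some $x$, and $Y_0, \ldots, Y_{k-1}$ is a tight path in $\mathcal{K}$ from $\emptyset$ to $Y_{k-1}$, so $K \setminus \{x\} \in \check{\mathcal{K}}$. Union-closedness is the main obstacle. Given $X, Y \in \check{\mathcal{K}}$, we know $X \cup Y \in \mathcal{K}$ by union-closedness of $\mathcal{K}$, but we must produce a tight path from $\emptyset$ to $X \cup Y$ lying entirely in $\mathcal{K}$. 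The natural strategy is: first walk up a tight path from $\emptyset$ to $X$ (inside $\mathcal{K}$, by hypothesis on $X$), and then append elements of $Y \setminus X$ one at a time, at each stage taking the union with an appropriate prefix of a tight path to $Y$. Concretely, let $\emptyset = Z_0, Z_1, \ldots, Z_m = Y$ be a tight path in $\mathcal{K}$ witnessing $Y \in \check{\mathcal{K}}$; then consider the sequence $X = X \cup Z_0, X \cup Z_1, \ldots, X \cup Z_m = X \cup Y$. Each term lies in $\mathcal{K}$ because it is the union of $X \in \mathcal{K}$ and $Z_i \in \mathcal{K}$, and consecutive terms differ by $Z_{i+1} \setminus Z_i$, a single element that is either already in the current set (in which case the two terms coincide and we skip the step) or is genuinely new (a valid tight-path step). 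Concatenating the tight path $\emptyset \to X$ with this second sequence, after deleting repeated terms, yields a tight path from $\emptyset$ to $X \cup Y$ in $\mathcal{K}$, so $X \cup Y \in \check{\mathcal{K}}$.

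The one point requiring care in the union-closedness argument is the bookkeeping around repeated terms: when $Z_{i+1} \setminus Z_i \subseteq X \cup Z_i$ we get $X \cup Z_{i+1} = X \cup Z_i$, so we must delete duplicates to obtain a genuine tight path (in which consecutive sets differ by exactly one element); this is routine but should be stated. I expect no deeper difficulty --- the whole proof is elementary once the "take unions with prefixes of a tight path" idea is in hand, and it does not even invoke the Edelman result cited in the excerpt, giving a self-contained argument.
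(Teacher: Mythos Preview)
Your proof is correct and follows essentially the same approach as the paper's own proof: accessibility is immediate from the definition of $\check{\mathcal{K}}$, union-closedness is obtained by concatenating a tight path to $X$ with the (de-duplicated) sequence $X\cup Z_0,\ldots,X\cup Z_m$ built from a tight path $\emptyset=Z_0,\ldots,Z_m=Y$, and maximality follows by peeling off elements via accessibility of any antimatroid $\mathcal{A}\subseteq\mathcal{K}$. Your write-up is in fact slightly more explicit than the paper's about the duplicate-removal bookkeeping and about the inclusion $\check{\mathcal{K}}\subseteq\mathcal{K}$.
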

For a set ${\cal R}$ of rules, we define ${\cal A}({\cal R})$ 
as the maximal antimatroid $\check{\cal K}({\cal R})$
in the union-closed family ${\cal K}({\cal R})$. 

The main subjects of this paper are fundamental algorithmic aspects on
the association ${\cal R} \mapsto {\cal A}({\cal R})$,
and their implications to the educational system design.
To begin with,
let us formalize our algorithmic setting.
We are given a set ${\cal R}$ of rules as an input, 
where the size of ${\cal R}$ 
is its coding length  $l({\cal R}) := \sum_{(A,q)\in\mathcal{R}}(|A|+1)$.
We address algorithms and computational complexity 
for basic problems of handling ${\cal A}({\cal R})$ by ${\cal R}$.
Notice that this is different from a standard setting in implicational systems: 
an implication basis of a family in question is given.
Indeed, ${\cal R}$ is not necessarily 
an implicational basis of ${\cal A}({\cal R})$.

We first consider the membership problem for ${\cal A}({\cal R})$:
\begin{description}
\item[Membership problem]\ \\ 
\begin{description}
	\vspace{-3ex}
\item[{\rm Input:}]  A set ${\cal R}$ of rules and a set $X$. 
\item[{\rm Task:}] Determine whether $X$ belongs to $\mathcal{A}(\mathcal{R})$.
\end{description}
\end{description}
Whereas the membership problem for ${\cal K}({\cal R})$ 
is easily solved (in linear time), 
the computational complexity of the membership problem for ${\cal A}({\cal R})$ 
is not trivial,  
and is in NP since a tight path 
from $\emptyset$ to $X$ in $\mathcal{K}(\mathcal{R})$ is a polynomial certificate. 
If the membership problem can be solved efficiently, 
then one might say that an antimatroid ${\cal A}$ can be realized in computers 
by maintaining ${\cal R}$ with ${\cal A} = {\cal A}({\cal R})$.
We show that the membership problem for ${\cal A}({\cal R})$ can also be solved in linear time.
\begin{thm}\label{thm:membership}
	The membership problem for ${\cal A}({\cal R})$ can be solved in linear time. 
\end{thm}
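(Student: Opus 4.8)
The plan is to reduce the problem, via Theorem~\ref{thm:uniqueness}, to a purely combinatorial question about $\mathcal{K}(\mathcal{R})$: by definition of $\check{\mathcal{K}}$, we have $X\in\mathcal{A}(\mathcal{R})$ if and only if the elements of $X$ can be ordered $x_{1},\dots,x_{k}$ so that every prefix $\{x_{1},\dots,x_{i}\}$ belongs to $\mathcal{K}(\mathcal{R})$, i.e.\ there is a tight path from $\emptyset$ to $X$ inside $\mathcal{K}(\mathcal{R})$. I would decide this greedily: set $S:=\emptyset$ and, while $S\neq X$, pick \emph{any} $x\in X\setminus S$ with $S\cup\{x\}\in\mathcal{K}(\mathcal{R})$ and replace $S$ by $S\cup\{x\}$; output ``$X\in\mathcal{A}(\mathcal{R})$'' precisely when this process reaches $X$. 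Every set produced lies in $\check{\mathcal{K}}(\mathcal{R})$, because a prefix of a tight path from $\emptyset$ is again a tight path from $\emptyset$, so the only question is whether the greedy choice is safe.

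Safety follows from a confluence property of $\check{\mathcal{L}}$ for an arbitrary union-closed family $\mathcal{L}$ with $\emptyset\in\mathcal{L}$: if $S,T\in\check{\mathcal{L}}$ and $T\not\subseteq S$, then $S\cup\{t\}\in\mathcal{L}$ for some $t\in T\setminus S$. To prove it, fix a tight path $\emptyset=Z_{0},Z_{1},\dots,Z_{m}=T$ witnessing $T\in\check{\mathcal{L}}$. Since $S\in\mathcal{L}$ and $\mathcal{L}$ is union-closed, every $S\cup Z_{i}$ lies in $\mathcal{L}$, and consecutive members of the chain $S=S\cup Z_{0},\,S\cup Z_{1},\dots,\,S\cup Z_{m}=S\cup T$ differ by at most one element; as $T\not\subseteq S$ we have $S\cup T\neq S$, so the first index $i$ with $S\cup Z_{i+1}\neq S\cup Z_{i}$ produces $t\in T\setminus S$ with $S\cup\{t\}=S\cup Z_{i+1}\in\mathcal{L}$. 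Applying this with $\mathcal{L}=\mathcal{K}(\mathcal{R})$, $T=X$ (under the assumption $X\in\mathcal{A}(\mathcal{R})$), and $S$ the current greedy set shows that the process can never get stuck before reaching $X$; conversely, if it does reach $X$ the sequence of sets is a tight path, so $X\in\mathcal{A}(\mathcal{R})$ by Theorem~\ref{thm:uniqueness}. Hence the greedy decision is correct. (This confluence lemma is essentially what underlies Theorem~\ref{thm:uniqueness} itself, so it may already be available.)

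It remains to run greedy in linear time. First, in one $O(l(\mathcal{R})+|X|)$ pass (using a boolean indicator array for $X$ and, for each rule, for its body), discard every rule $(A,q)$ with $q\notin X$ or $q\in A$: such rules accept every subset of $X$, so they do not affect $\mathcal{K}(\mathcal{R})$ restricted to subsets of $X$. After this reduction, for $S\subseteq X$ and $x\in X\setminus S$ we have $S\cup\{x\}\in\mathcal{K}(\mathcal{R})$ iff $S\cap A\neq\emptyset$ for every remaining rule $(A,x)$. I would therefore maintain, for each rule $r=(A,q)$, the counter $c_{r}=|A\cap S|$, and for each $x\in X$ the number $n_{x}$ of rules with head $x$ that are still unsatisfied (i.e.\ have $c_{r}=0$), together with a queue holding the elements $x\in X\setminus S$ with $n_{x}=0$. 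Building the body-incidence lists $\{r\mid y\in A\}$ and initializing the counters costs $O(l(\mathcal{R})+|X|)$. When an element $x$ is popped and added to $S$, scan the rules whose body contains $x$, increment their $c_{r}$, and whenever some $c_{r}$ passes from $0$ to $1$ decrement $n_{q}$ for its head $q$ and enqueue $q$ once $n_{q}$ reaches $0$ (and $q\notin S$). Each rule $r=(A,q)$ is touched at most $|A|$ times in total and each element is enqueued at most once, so the whole run takes $O(l(\mathcal{R})+|X|)$ time; the answer is ``yes'' iff $|S|=|X|$ at termination.

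The only genuinely non-routine point is the confluence lemma making the greedy rule correct — which, incidentally, also strengthens the ``in NP'' observation from the introduction to membership in P. The linear-time implementation is then just the counter/queue bookkeeping sketched above, in the spirit of unit propagation for Horn formulas.
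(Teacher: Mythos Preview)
Your proposal is correct and follows essentially the same route as the paper: your confluence lemma is the paper's Proposition~\ref{prop:path_maximal} (proved by the same union-with-a-tight-path trick), and your counter/queue implementation is the unit-propagation style bookkeeping the paper carries out with lists $H_x$, $T_x$, $E$ in Algorithm~\ref{algo:Xcirc}. The only cosmetic difference is that the paper phrases the algorithm as computing $X^{\circ}$ and then testing $X=X^{\circ}$, whereas you test membership directly; the data structures (doubly-linked deletion versus counters) are interchangeable and yield the same $O(l(\mathcal{R})+|X|)$ bound.
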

Based on this linear time membership algorithm, 
we will provide an efficient algorithm to enumerate all members of ${\cal A}({\cal R})$.

We next consider the inference problem, which is motivated 
by the query learning for educational systems; see below. 
A rule $(A, q)$ is called an {\em implicate} of a family $\mathcal{K}$ 
if $(A,q)$ accepts all the members of $\mathcal{K}$. 
\begin{description}
\item[Inference problem] \ \\ 
\begin{description}
\vspace{-3ex}
\item[\rm Input: ] A set $\mathcal{R}$ of rules and a rule $(A,q)$.
\item[\rm Task: ] Determine whether $(A,q)$ is an implicate of $\mathcal{A}(\mathcal{R})$.
\end{description}
\end{description}
We show that this problem can also be solved efficiently.
\begin{thm}\label{thm:inference}
The inference problem for ${\cal A}({\cal R})$ can be solved in linear time. 
\end{thm}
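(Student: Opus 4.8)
The plan is to reduce the inference problem to a single run of a forward-chaining (unit-resolution) procedure, essentially the membership algorithm of Theorem~\ref{thm:membership} run on a modified ground set.

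First I would reformulate the task. By definition $(A,q)$ fails to be an implicate of $\mathcal{A}(\mathcal{R})$ precisely when there is a member $X\in\mathcal{A}(\mathcal{R})$ with $q\in X$ and $X\cap A=\emptyset$, i.e.\ $q\in X\subseteq Q\setminus A$. If $q\in A$ this is impossible, so $(A,q)$ is trivially an implicate; assume $q\notin A$ and put $S:=Q\setminus A$. A tight path from $\emptyset$ to such an $X$ in $\mathcal{K}(\mathcal{R})$ uses only subsets of $X\subseteq S$, so it is a tight path in the union-closed family $\mathcal{L}:=\mathcal{K}(\mathcal{R})\cap 2^{S}$; hence $\{X\in\mathcal{A}(\mathcal{R}):X\subseteq S\}=\check{\mathcal{L}}$. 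By Theorem~\ref{thm:uniqueness}, $\check{\mathcal{L}}$ is an antimatroid on $S$, and therefore has a unique maximal member $M$ (the union of all its members, which is again a member by union-closedness). Thus $(A,q)$ is an implicate of $\mathcal{A}(\mathcal{R})$ if and only if $q\notin M$, and it suffices to compute $M$ in linear time.

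To compute $M$ I would run the greedy process: start from $Z:=\emptyset$ and, while possible, add to $Z$ any $x\in S\setminus Z$ with $Z\cup\{x\}\in\mathcal{K}(\mathcal{R})$. Every intermediate $Z$ lies in $\check{\mathcal{L}}$ (append $x$ to a tight path reaching $Z$), and conversely every one-element extension of $Z$ inside $\check{\mathcal{L}}$ is a legal greedy move; so the process explores exactly inside the antimatroid $\check{\mathcal{L}}$. Since a member of an antimatroid other than the maximum one always admits a one-element extension within the antimatroid (a standard augmentation property, see \cite{KLS1991}), the process cannot terminate before $Z=M$. Efficiency comes from the local characterization: for $Z\in\mathcal{K}(\mathcal{R})$ and $x\notin Z$, one has $Z\cup\{x\}\in\mathcal{K}(\mathcal{R})$ if and only if every rule $(B,x)\in\mathcal{R}$ with $x\notin B$ has $B\cap Z\neq\emptyset$ — rules whose head is not $x$ cannot become newly violated, and rules $(B,x)$ with $x\in B$ are automatically satisfied once $x$ enters. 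Consequently $x\in S$ can never be added if some rule $(B,x)$ with $x\notin B$ has $B\cap S=\emptyset$ (a permanent obstruction); otherwise $x$ becomes available once, for each rule $(B,x)$ with $x\notin B$, some element of $B$ has entered $Z$.

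This is exactly a forward-chaining computation in the spirit of the classical linear-time algorithm for Horn formulas underlying Theorem~\ref{thm:membership}: for each $x\in S$ record a permanent-obstruction flag and a counter equal to the number of rules $(B,x)$ with $x\notin B$ and $B\cap S\neq\emptyset$; seed a queue with the unobstructed elements of $S$ whose counter is $0$; when an element is popped into $Z$, scan the precomputed list of rules having it in the body, and for each such rule $(B,p)$ not yet "satisfied" mark it satisfied, decrement the counter of $p$, and enqueue $p$ when its counter reaches $0$ (provided $p\in S$ is unobstructed). Each rule is touched a constant number of times per occurrence of an element in its body, so the total time is $O(l(\mathcal{R})+|A|)$, i.e.\ linear; at termination $Z=M$, and we output "implicate" iff $q\notin Z$. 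The one genuinely non-routine step is the correctness claim that the greedy stops precisely at $M$: this is where Theorem~\ref{thm:uniqueness} together with the augmentation property of antimatroids is essential, and once it is in place the linear-time realization is a direct adaptation of the forward-chaining algorithm already used for Theorem~\ref{thm:membership}.
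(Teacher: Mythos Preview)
Your proposal is correct and follows essentially the same approach as the paper: both reduce the inference problem to computing the maximum member of $\mathcal{A}(\mathcal{R})$ contained in $S=Q\setminus A$ (the paper writes this as $(Q\setminus A)^{\circ}$ and invokes Lemma~\ref{lem:imp_equiv}) via a greedy forward-chaining procedure whose correctness rests on the tight-path/augmentation property and whose linear-time implementation is the standard Horn unit-propagation data structure. The only cosmetic difference is that the paper packages the greedy correctness as a standalone Proposition~\ref{prop:path_maximal} (which it proves directly) and a general $X^{\circ}$ algorithm reused for both membership and inference, whereas you derive the reformulation and the augmentation step inline and cite~\cite{KLS1991} for the latter.
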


It turns out that this construction ${\cal R} \mapsto {\cal A}({\cal R})$ of an antimatroid 
is essentially equivalent to the construction of an antimatroid 
from rooted sets or circuits by Korte and Lov\'{a}sz \cite{KL1984}; 
see \cite[Section III. 3]{KLS1991}. 
We will establish this equivalence. 
Korte and Lov\'{a}sz showed the existence of the unique minimal representation of 
antimatroids by special circuits, called {\em critical circuits}. 
Translating their result, 
for an antimatroid ${\cal A}$,
there is a uniquel minimal set ${\cal R^*}$ of rules 
such that ${\cal A}({\cal R^*}) = {\cal A}$, where 
${\cal R^*}$ is minimal in its cardinality as well as its size. 
A rule in ${\cal R^*}$ is said to be {\em critical} for ${\cal A}$.
We will provide a quadratic time algorithm to construct 
this minimal set ${\cal R^*}$ from a given ${\cal R}$.
\begin{thm}\label{thm:critical}
For a given set $\mathcal{R}$ of rules, 
the set of critical rules for ${\cal A}({\cal R})$ can be obtained in quadratic time.
\end{thm}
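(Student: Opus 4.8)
The plan is to combine the equivalence with the Korte--Lov\'asz rooted-circuit representation from the previous section with the two linear-time oracles of Theorems~\ref{thm:membership} and~\ref{thm:inference}. The first step is to recast ``$(A,q)$ is critical for $\mathcal{A}(\mathcal{R})$'' as a condition checkable by a bounded number of oracle queries. By the equivalence, the critical rules of $\mathcal{A}(\mathcal{R})$ correspond to the critical rooted circuits of Korte and Lov\'asz~\cite{KL1984}; unwinding their description, a rule $(A,q)$ is critical for an antimatroid $\mathcal{A}$ exactly when (i) $(A,q)$ is an implicate of $\mathcal{A}$, (ii) $A$ is inclusion-minimal for~(i), i.e.\ for every $a\in A$ there is $X\in\mathcal{A}$ with $q\in X$ and $X\cap A=\{a\}$, and (iii) a ``root condition'' that distinguishes the critical circuits among the minimal-body implicates and that, through the equivalence, can be phrased via membership and reachability assertions about $A$, its one-element deletions, and $A\cup\{q\}$ in $\mathcal{A}$. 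For a candidate $(A,q)$, items (i) and (ii) cost at most $|A|+1$ inference queries and (iii) costs $O(|A|)$ membership queries; by Theorems~\ref{thm:membership} and~\ref{thm:inference} each query runs in $O(l(\mathcal{R}))$ time, so testing $(A,q)$ takes $O((|A|+1)\,l(\mathcal{R}))$.

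The second step is to locate $\mathcal{R}^*$ inside a small set of candidates read off from the input. The structural observation is that no critical rule ``escapes'' $\mathcal{R}$: if $q$ heads no rule of $\mathcal{R}$, then $\mathcal{A}(\mathcal{R})$ is closed under adjoining $q$ --- a tight path from $\emptyset$ to $X$ in $\mathcal{K}(\mathcal{R})$ extends to one from $\emptyset$ to $X\cup\{q\}$ --- so no nontrivial implicate, hence no critical rule, is rooted at $q$; and every critical rule rooted at a head $q$ has body contained in the body of some rule of $\mathcal{R}$ with head $q$ (the degenerate rule $(\emptyset,q)$, for $q$ in no member of $\mathcal{A}(\mathcal{R})$, arising as a body-minimization of such a rule). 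Accordingly the algorithm scans $\mathcal{R}$; for each $(A_0,q)\in\mathcal{R}$ it generates the minimal-body sub-implicates $A\subseteq A_0$ rooted at $q$, tests each against~(i)--(iii), and finally deletes every retained rule that is implied by the others --- a single inference query against the rules kept so far, which is legitimate because, by Theorem~\ref{thm:uniqueness} and the monotonicity of $\mathcal{K}\mapsto\check{\mathcal{K}}$, any family of implicates of $\mathcal{A}(\mathcal{R})$ that refines $\mathcal{R}$ still represents $\mathcal{A}(\mathcal{R})$. By the Korte--Lov\'asz uniqueness of the critical representation, the surviving rules form $\mathcal{R}^*$.

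For the running time, the work charged to $(A_0,q)\in\mathcal{R}$ is $O(|A_0|)$ oracle calls for the body-minimization and screening plus $O(1)$ calls per surviving candidate, each call costing $O(l(\mathcal{R}))$; with $\sum_{(A_0,q)\in\mathcal{R}}(|A_0|+1)=l(\mathcal{R})$ and $O(l(\mathcal{R}))$ retained rules overall, the screening and the final redundancy pass each take $O(l(\mathcal{R})^2)$, which is the claimed bound. The main obstacle is the structural step of the second paragraph --- proving that passing from $\mathcal{K}(\mathcal{R})$ to its maximal antimatroid $\check{\mathcal{K}}(\mathcal{R})$ cannot create a critical rooted circuit whose body lies outside the bodies supplied by $\mathcal{R}$ at that head, identifying and oracle-testing the correct root condition~(iii), and bounding the number of candidate bodies produced per input rule --- which I expect to establish by analysing tight paths in $\mathcal{K}(\mathcal{R})$ via the rooted-circuit correspondence and invoking the Korte--Lov\'asz uniqueness theorem once a valid refinement of $\mathcal{R}$ is available.
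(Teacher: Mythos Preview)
Your structural observation in the second paragraph --- that every critical rule $(A,q)$ of $\mathcal{A}(\mathcal{R})$ satisfies $A\subseteq A'$ for some $(A',q)\in\mathcal{R}$ --- is exactly the paper's key Lemma~\ref{lem:critical_in_rep}, and the overall shape (minimize bodies inside $\mathcal{R}$, discard redundant rules, invoke Korte--Lov\'asz uniqueness) matches the paper's. But you are making the argument harder than necessary in two places, and those are precisely the points you flag as obstacles.

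First, the unspecified ``root condition''~(iii) is not needed at all. The paper never tests criticality directly. It greedily shrinks each $(A_0,q)\in\mathcal{R}$ to \emph{one} prime implicate by dropping $a\in A_0$ whenever $(A_0-a,q)$ is still an implicate of $\mathcal{A}(\mathcal{R})$, and then removes any rule that is an implicate of $\mathcal{A}(\mathcal{R}-(A,q))$. The essential observation is that each such replacement or deletion leaves $\mathcal{A}(\mathcal{R})$ invariant (via Theorem~\ref{thm:uniqueness}), so the output is a set of circuits representing $\mathcal{A}(\mathcal{R})$ that is minimal under single deletions; Theorem~\ref{thm:circuit} then forces it to coincide with the set of critical circuits. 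No separate oracle test for ``critical versus merely prime'' is required.

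Second, your worry about ``bounding the number of candidate bodies produced per input rule'' suggests you intend to enumerate \emph{all} inclusion-minimal $A\subseteq A_0$ with $(A,q)$ an implicate. This is both unnecessary and potentially expensive. Greedy minimization yields exactly one prime implicate per input rule, and no critical rule is lost: since $\mathcal{A}(\mathcal{R})$ is preserved throughout, your own structural lemma applied to the \emph{output} set --- whose rules are now all prime --- shows that every critical rule already sits there verbatim (a critical $A$ contained in a prime $A'$ with the same head must equal $A'$). With this simplification the total number of inference calls is exactly $\sum_{(A_0,q)\in\mathcal{R}}(|A_0|+1)=l(\mathcal{R})$, and the $O(l^2)$ bound follows immediately.
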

As an application, 
we can determine, in quadratic time, whether two sets of rules 
define the same antimatroid. 

The representation ${\cal R} \mapsto {\cal A}({\cal R})$ 
fits naturally into the query learning of antimatroids 
arising from the design of KST-based educational systems, 
which is our practical motivation of this paper.
Koppen~\cite{K1993} and others~\cite{KD1990,M1989} considered a procedure {\em QUERY} 
to build a space of knowledge states
by asking a series of queries $(A_1,q_1), (A_2,q_2),\ldots,$ to a human expert. 
Here a query $(A, q)$ is the question:  
{\em does a learner fail to solve question $q$ provided he/she fails to solve every question in $A$?}
Then, for the set ${\cal R}$ of `yes' queries,  
the space of knowledge states is estimated as~${\cal K}({\cal R})$.
The QUERY procedure is designed for the case where the space is modeled as 
a union-closed family (a knowledge space).
Therefore, the resulting ${\cal K}({\cal R})$ is not necessarily an antimatroid. 
In practical situations, however, educational systems 
need to use an antimatroid (a learning space) as a model.
This leads to the following question given by Doignon and Falmagne in \cite[p. 335]{FD2011}: 
\begin{quote}
	{\em This raises the following problem: assuming that,
		except for errors, the responses to the queries are dictated by a latent learning
		space ${\cal L}$, can a learning space approximating ${\cal L}$ be derived by the querying
		method through some elaboration of QUERY~?}
\end{quote}
They developed a relatively complicated adaptation of QUERY, called {\em adapted QUERY}, 
that always keeps the estimated space an antimatroid  
by careful managing of `yes' queries and the surmise function; see \cite[Chapter 16]{FD2011} for detail.
 
Our results suggest a simple revision of QUERY  
{\em to use ${\cal A}({\cal R})$ instead of ${\cal K}({\cal R})$}.
Actually this approach was also suggested by Doignon under the name of {\em adjusted QUERY}, 
though an effective way of handling~${\cal A}({\cal R})$ 
was ``{\em still under investigation}"~\cite[p.14]{Doignon14ICFCA}.
Now the adjusted QUERY is efficiently implementable.
We believe that this is a desired elaboration of QUERY, which affirmatively answers the above question.
Indeed, by Theorem~\ref{thm:uniqueness}, 
the resulting space ${\cal A}({\cal R})$ 
is always an antimatroid that includes the target antimatroid ${\cal L}$,
 and might be a reasonable approximation of ${\cal L}$.
As seen in Theorem~\ref{thm:membership},  
the association ${\cal R} \mapsto {\cal A}({\cal R})$ is manageable in computer.
Moreover we can avoid giving redundant queries to the expert 
by making use of the algorithm in Theorem~\ref{thm:inference}.

\paragraph{Related work.} 
Eppstein, Falmagne, and Uzun~\cite{EFU09} consider a different approach to 
the above question of Doignon and Falmagne according to base families of antimatroids; see also \cite[Section 16.3]{FD2011}.
Here the {\em base} of a (union-closed) family ${\cal K}$ consists of members of ${\cal K}$
that are not able to be represented as a union of other members.
Clearly the base ${\cal B}$ of ${\cal K}$ can recover the original ${\cal K}$ completely.
Eppstein, Falmagne, and Uzun study 
several algorithmic questions on the base of an antimatroid 
(a well-graded family, more generally).
They developed a polynomial time algorithm to determine whether given a family ${\cal B}$
is the base of an antimatroid.
Moreover they also provided a polynomial time algorithm to construct (the base of) 
a minimal antimatroid containing given a (union-closed) family.
These results lead to another elaboration of the QUERY algorithm, which
first estimates a union-closed family ${\cal K}$ 
by the original QUERY algorithm, and, from the base of ${\cal K}$, 
constructs and outputs a minimal antimatroid ${\cal A}$ containing ${\cal K}$. 
Our approach may be viewed as a counter part of theirs, since it constructs
a maximal antimatroid contained in a union-closed family.

Wild~\cite{Wild14} applies a compression technique to the antimatroid construction in KST.
This technique encodes a union-closed family ${\cal L}$ 
into a $\{0,1,2,n\}$-valued matrix
for which each row vector is a compressed expression of a subfamily of ${\cal L}$ 
and ${\cal L}$ is the disjoint union of these subfamilies.
This matrix is constructed from an implicational basis of ${\cal L}$.
He discusses how to work the adapted QUERY with this encoding.
It would be an interesting research direction to 
incorporate this compression technique into our revision (or adjusted QUERY).

\paragraph{Organization.}
The rest of the paper is organized as follows. 
In Section~\ref{sec:preliminaries}, 
we give preliminary arguments including 
a proof of Theorem~\ref{thm:uniqueness}.
We also summarize some basic relationship among Horn rules, entailments, 
closure operators, and convex geometries, with the help of results in implicational systems,  
and then explain the Korte-Lov\'asz representation.
In Section~\ref{sec:algorithms}, we give algorithmic results.
We present algorithms for Theorems~\ref{thm:membership}, \ref{thm:inference}, and \ref{thm:critical}.
Also we give an efficient algorithm to generate all members of ${\cal A}({\cal R})$ 
and a resolution-type algorithm to generate all implicates of ${\cal A}({\cal R})$.
In Section~\ref{sec:application}, 
we discuss the above revised QUERY algorithm in more detail. 
Preliminary experimental results show that 
the revised QUERY algorithm reduces $30\%$ of queries that are needed 
by the original QUERY when the target space is an antimatroid.
In Section~\ref{sec:concluding}, 
we mention open problems and future research issues.

\section{Preliminaries}\label{sec:preliminaries}
Throughout the paper, $Q$ denotes a finite set.
By a family we mean a family of subsets of $Q$.  
The union $A \cup \{a\}$ of a set $A$ and an element $a$ is denoted by $A + a$.
The difference $A \setminus \{a\}$ is denoted by $A - a$. 
We assume that any union-closed family ${\cal K}$ considered in this paper contains the empty set, i.e.,
\[
\emptyset \in {\cal K},
\]
whereas any intersection-closed family contains the whole set $Q$.
For a union-closed family ${\cal K}$ 
and a set $X \subseteq Q$, there uniquely 
exists a maximal subset $Y \in {\cal K}$ contained in $X$.
This $Y$ is denoted by $X^{\circ}$.

An antimatroid is usually defined as a family ${\cal K}$ on $Q$ 
satisfying ({\bf Union-closedness}), ({\bf Accessibility}), and
\[
Q \in {\cal K}
\]
We here call such an antimatroid {\em proper}.
For an improper antimatroid ${\cal A}$, 
every subset is contained in $Q^{\circ} (\in {\cal A})$.
Hence ${\cal A}$ is regarded as a family on $Q^{\circ}$, and is a proper antimatroid on $Q^{\circ}$.
Thus known results and properties for (proper) antimatroids 
are easily adjusted for improper ones.
We remark that a union-closed family may or may not contain a proper antimatroid.
Notice that a union-closed family ${\cal K}$ contains a proper antimatroid if 
and only if there is a tight path from $\emptyset$ to $Q$ in ${\cal K}$.

As explained in several papers of implicational systems,
a set ${\cal R}$ of Horn rules is naturally identified with a (pure) Horn Boolean CNF $\varphi$, 
and ${\cal K}({\cal R})$ is identified with the set of true points of $\varphi$; 
see \cite[Section 5]{BM10} and \cite[Section 3.4]{Wild14survey}.
Various Boolean function theoretic concepts and algorithms are easily adapted to our setting.
In Appendix, we briefly summarize this relation to Horn Boolean CNFs.

\subsection{The unique maximal antimatroid in a union-closed family}
Here we prove Theorem \ref{thm:uniqueness}.
Let ${\cal K}$ be a union-closed family.
We first show that $\check{\mathcal{K}}$ is an antimatroid. 
Since the accessibility immediately follows from the definition of $\check{\cal K}$, 
it suffices to show that $\check{\mathcal{K}}$ is union-closed. 
Let $X$ and $Y$ be members of $\check{\mathcal{K}}$. 
By definition, there are tight paths $\emptyset = X_{0}, X_{1}, \ldots , X_{k}=X$ and $\emptyset =Y_{0}, Y_{1}, \ldots , Y_{m}=Y$ in $\mathcal{K}$. 
Then the distinct members of $\{X\cup Y_{i}\mid i=0,\ldots, m\}$ form a tight path from $X$ to $X\cup Y$. 
By the union-closedness of $\mathcal{K}$, all of them are members of $\mathcal{K}$. 
Combining it with the tight path from $\emptyset$ to $X$, we obtain a tight path from $\emptyset$ to $X\cup Y$ in $\mathcal{K}$. 
Hence we have $X\cup Y \in \check{\mathcal{K}}$. 

Finally we show the maximality of $\check{\mathcal{K}}$. Let $\mathcal{L}$ be an antimatroid contained by $\mathcal{K}$. 
By the accessibility, for every member $X$ of $\mathcal{L}$, there is a tight path from $\emptyset$ to $X$ in $\mathcal{K}$. 
Hence $\mathcal{L}\subseteq \check{\mathcal{K}}$, and 
Theorem~\ref{thm:uniqueness} is proved.

\begin{ex}
	Let $Q=\{0,1,2,3,4,5,6\}$.  Let ${\cal R}$ be the set of rules consisting of
	\begin{eqnarray*}
&&		(\{0, 1, 2, 3 \},6), (\{0, 1, 3, 5, 6 \},2), (\{0, 3, 4, 5, 6 \},2), (\{0, 3, 5, 6 \},1),  (\{0, 5, 6 \},4), (\{6\},0), \\ 
&& (\{1, 2, 3 \},0), (\{1, 4, 5 \},2), (\{1, 5 \},4), (\{2, 3, 4 \},1), (\{2, 3, 6 \},4), (\{2, 5, 6 \},4), \mbox{and } (\{2, 6 \},3).
\end{eqnarray*}	
As in Figure \ref{fig:KRAR}, there are 26 members in ${\cal A}({\cal R})$ and 53 members in ${\cal K}({\cal R})$,
	where members in ${\cal A}({\cal R})$ are colored gray. 
\end{ex}

\begin{figure}
\centering
\includegraphics[scale=0.2]{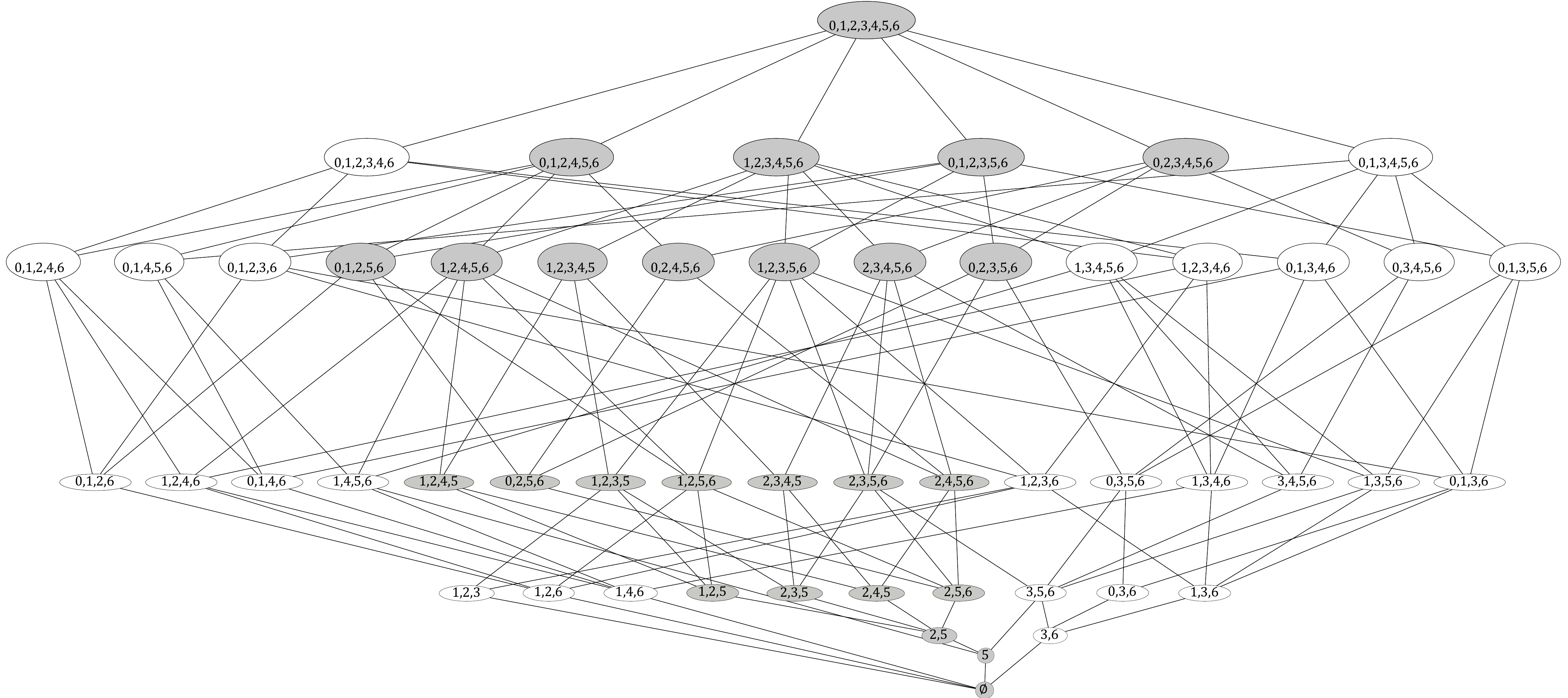}
\caption{$\mathcal{K}(\mathcal{R})$ and $\mathcal{A}(\mathcal{R})$. }
\label{fig:KRAR}
\end{figure}

\subsection{Closure operator and entailment}
Here we first summarize the basic relationship 
between Horn rules, entailments, and closure operators.
This is essentially one given by Dowling~\cite[Section 2]{Dowling93};
see~\cite{BM10, Wild14survey} from the view of implicational systems.
We then discuss entailments for antimatroids (or convex geometries).

We allow rule $(A,q)$ to be $q \in A$; 
such a rule accepts every subset, and is called {\em trivial}.
Thus a set of rules is a subset of $2^{Q} \times Q$, and is a binary relation between $2^Q$ and $Q$
(called an {\em implication relation} by Dowling~\cite{Dowling93}).
An {\em entailment} ({\em entail relation}) on $Q$ is 
a binary relation ${\cal R} \subseteq 2^{Q} \times Q$ satisfying
\begin{itemize}
	\item[(E1)] for all $A \subseteq Q$ and $q \in A$, it holds $A {\cal R} q$.
	\item[(E2)] for all nonempty $A,B \subseteq Q$ and $q \in Q$, 
	if $A{\cal R}b$ holds for all $b \in B$ and $B {\cal R} q$ holds, 
	then $A{\cal R}q$ holds.
\end{itemize}
Here we say ``{\em $A {\cal R} q$ holds}" if $(A,q) \in {\cal R}$.
In the terminology of implicational systems,
an entailment is a {\em full unit implicational system (full UIS)}~\cite{BM10}.

The following is a fundamental relationship between union-closed families and entailments;
recall that an implicate of a family ${\cal K}$ is 
a rule $(A,q)$ accepted by all members of ${\cal K}$, i.e., 
$A \cap X \neq \emptyset$ or $q \not \in X$ for all $X \in {\cal K}$. 
\begin{thm}[{Galois connection~\cite{KD1990,M1989}; see~\cite[Chapter 7]{FD2011}}]\label{thm:connection}
	\begin{itemize}
		\item[{\rm (1)}] For ${\cal K} \subseteq 2^{Q}$, 
		 the set ${\cal R}$ of all implicates of ${\cal K}$ is an entailment, and ${\cal K}({\cal R})$ is the unique minimal union-closed family containing ${\cal K}$.
		 In particular, if ${\cal K}$ is union-closed, then ${\cal K} = {\cal K}({\cal R})$. 
		 \item[{\rm (2)}] For  ${\cal R} \subseteq 2^{Q} \times Q$, the set of all implicates of ${\cal K}({\cal R})$ is the unique minimal entailment containing ${\cal R}$.
		 In particular, if ${\cal R}$ is an entailment, then ${\cal R}$ is equal to the set of all implicates of ${\cal K}({\cal R})$. 
	\end{itemize}
\end{thm}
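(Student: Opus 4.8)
The plan is to verify the two directions of the Galois connection by elementary arguments about acceptance, postponing the ``closure'' statements (minimality) to the very end. Throughout, write $\mathrm{Impl}({\cal K})$ for the set of all implicates of a family ${\cal K}$, and recall that $(A,q)$ is an implicate of ${\cal K}$ iff every $X \in {\cal K}$ with $q \in X$ meets $A$.

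\medskip
\noindent\textbf{Part (1): $\mathrm{Impl}({\cal K})$ is an entailment, and ${\cal K}(\mathrm{Impl}({\cal K}))$ is the minimal union-closed family containing ${\cal K}$.}
First I would check axioms (E1) and (E2) for ${\cal R} := \mathrm{Impl}({\cal K})$. (E1) is immediate: if $q \in A$ and $q \in X$ then $A \cap X \ni q$, so every trivial rule is an implicate of every family. For (E2), suppose $A {\cal R} b$ for all $b \in B$ and $B {\cal R} q$, and take $X \in {\cal K}$ with $q \in X$. Since $B {\cal R} q$, we have $X \cap B \neq \emptyset$; pick $b \in X \cap B$. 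Since $A {\cal R} b$ and $b \in X$, we get $X \cap A \neq \emptyset$. Hence $A {\cal R} q$. Next, ${\cal K} \subseteq {\cal K}({\cal R})$ holds trivially because every member of ${\cal K}$ is, by definition of implicate, accepted by every rule in $\mathrm{Impl}({\cal K})$. The family ${\cal K}({\cal R})$ is union-closed (being of the form ${\cal K}(\cdot)$, by the classical Horn-representation result cited in the excerpt, or directly: if $X,Y$ are accepted by $(A,q)$ and $q \in X \cup Y$ then $q$ lies in one of them, which meets $A$). For minimality, let ${\cal L}$ be any union-closed family with ${\cal K} \subseteq {\cal L}$; by the classical result ${\cal L} = {\cal K}({\cal S})$ for some set of rules ${\cal S}$, and every rule of ${\cal S}$ accepts all of ${\cal L} \supseteq {\cal K}$, hence ${\cal S} \subseteq \mathrm{Impl}({\cal K}) = {\cal R}$; more rules means fewer accepted sets, so ${\cal K}({\cal R}) \subseteq {\cal K}({\cal S}) = {\cal L}$. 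Finally, if ${\cal K}$ is itself union-closed, then it is one such ${\cal L}$ containing itself, so ${\cal K}({\cal R}) \subseteq {\cal K}$; combined with ${\cal K} \subseteq {\cal K}({\cal R})$ this gives equality.

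\medskip
\noindent\textbf{Part (2): $\mathrm{Impl}({\cal K}({\cal R}))$ is the minimal entailment containing ${\cal R}$.}
First, ${\cal R} \subseteq \mathrm{Impl}({\cal K}({\cal R}))$: every rule of ${\cal R}$ accepts every member of ${\cal K}({\cal R})$ by the very definition of ${\cal K}({\cal R})$. By Part~(1) applied to the family ${\cal K}({\cal R})$, the set $\mathrm{Impl}({\cal K}({\cal R}))$ is an entailment. For minimality, let ${\cal S}$ be any entailment with ${\cal R} \subseteq {\cal S}$. I would argue ${\cal K}({\cal S}) = {\cal K}({\cal R})$: the inclusion ${\cal K}({\cal S}) \subseteq {\cal K}({\cal R})$ is clear since ${\cal S}$ has more rules; conversely, one shows every rule of ${\cal S}$ already accepts every member of ${\cal K}({\cal R})$, i.e.\ ${\cal S} \subseteq \mathrm{Impl}({\cal K}({\cal R}))$ — this is the crux and I expect it to be the main obstacle. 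The point is that ${\cal S}$, being an entailment, is closed under the derivation rules (E1)–(E2), and $\mathrm{Impl}({\cal K}({\cal R}))$ is likewise an entailment containing ${\cal R}$; one needs that ${\cal R}$ \emph{generates} all of $\mathrm{Impl}({\cal K}({\cal R}))$ under (E1)–(E2), equivalently that the smallest entailment containing ${\cal R}$ is exactly $\mathrm{Impl}({\cal K}({\cal R}))$. Granting this ``completeness of the entailment calculus with respect to the semantics ${\cal K}(\cdot)$'' (a standard fact from implicational systems / Horn logic, citing \cite{BM10,Wild14survey}), we get ${\cal S} \supseteq \mathrm{Impl}({\cal K}({\cal R}))$ whenever ${\cal S}$ is an entailment with ${\cal R}\subseteq{\cal S}$: indeed then ${\cal K}({\cal S}) = {\cal K}({\cal R})$, so $\mathrm{Impl}({\cal K}({\cal S})) = \mathrm{Impl}({\cal K}({\cal R}))$, and since ${\cal S}$ is an entailment Part~(1) applied to ${\cal K}({\cal S})$ together with the already-established equality ${\cal S} = \mathrm{Impl}({\cal K}({\cal S}))$ (stated as the ``in particular'' clause we are simultaneously proving) closes the loop. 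To avoid circularity I would instead argue directly: show by induction on derivations that any $(A,q)$ obtainable from ${\cal R}$ via (E1)–(E2) is an implicate of ${\cal K}({\cal R})$ (soundness, easy), and conversely that any implicate of ${\cal K}({\cal R})$ is so derivable (completeness) — for the latter, given $(A,q) \notin \langle {\cal R}\rangle$ (the generated entailment), exhibit the set $X = \{x \in Q : A {\cal R} x \text{ is derivable}\}$, check $X \in {\cal K}({\cal R})$, note $q \in X$ would force derivability of $A{\cal R}q$, hence $q \notin X$ while $A \cap X = \emptyset$ would similarly be impossible... more precisely $A \subseteq X$ by (E1) so $X \cap A = A \neq \emptyset$ unless $A = \emptyset$; handling $A=\emptyset$ and the case $q\notin X$ separately shows $X$ witnesses that $(A,q)$ is not an implicate. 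This witness construction is the technical heart; everything else is bookkeeping. The ``in particular'' clause of~(2) then follows: if ${\cal R}$ is already an entailment it is the minimal entailment containing itself, hence equals $\mathrm{Impl}({\cal K}({\cal R}))$.
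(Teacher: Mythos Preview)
The paper does not prove this theorem; it is stated as a known result with citations to \cite{KD1990,M1989} and \cite[Chapter~7]{FD2011}, and used as a black box. So there is nothing in the paper to compare against --- you are supplying a proof the paper omits.

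Your Part~(1) is fine. In Part~(2) the overall plan --- soundness and completeness of the (E1)--(E2) calculus with respect to the semantics ${\cal K}(\cdot)$ --- is the standard one, but your witness construction for completeness is backwards. You set $X=\{x\in Q:(A,x)\in\langle{\cal R}\rangle\}$ and then try to use this $X$ to show that $(A,q)$ is not an implicate of ${\cal K}({\cal R})$. For that you need a member of ${\cal K}({\cal R})$ that \emph{contains} $q$ and is \emph{disjoint} from $A$; you have instead established $q\notin X$ and $A\subseteq X$, which only says that $(A,q)$ \emph{accepts} $X$ --- useless as a counter-model. Worse, your $X$ need not lie in ${\cal K}({\cal R})$ at all: with $Q=\{1,2,3\}$, ${\cal R}=\{(\{3\},1)\}$, $A=\{1\}$, one gets $X=\{1\}$, which is rejected by the unique rule in~${\cal R}$.

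The fix is to take the complement. Put $Y:=\{x:(A,x)\in\langle{\cal R}\rangle\}$ and $W:=Q\setminus Y$. Then $A\subseteq Y$ by (E1), so $W\cap A=\emptyset$; and $q\notin Y$ by hypothesis, so $q\in W$. Finally $W\in{\cal K}({\cal R})$: if some $(B,p)\in{\cal R}$ had $p\in W$ and $W\cap B=\emptyset$, then $B\subseteq Y$, so $(A,b)\in\langle{\cal R}\rangle$ for every $b\in B$, and (E2) together with $(B,p)\in\langle{\cal R}\rangle$ gives $(A,p)\in\langle{\cal R}\rangle$, i.e.\ $p\in Y$, contradicting $p\in W$. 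Thus $W$ witnesses that $(A,q)$ is not an implicate of ${\cal K}({\cal R})$, and with this correction the rest of your argument goes through.
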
	
For a family ${\cal K}$, let ${\cal K}^*$ denote the {\em dual}  of ${\cal K}$ defined by
\[
{\cal K}^* := \{ Q \setminus X \mid X \in {\cal K} \}.
\]
The dual of a union-closed family is an intersection-closed family (or a {\em closure system}). 
For an intersection-closed family ${\cal J}$ on $Q$, 
we can define a map $\tau:2^{Q} \to 2^{Q}$, 
called the {\em closure operator}, 
by 
\begin{equation*}
\tau(X) := \bigcap \{ Y  \in {\cal J} \mid X \subseteq Y\} \quad (X \subseteq Q).
\end{equation*}
For a union-closed family ${\cal K}$ and 
the closure operator $\tau$ of dual ${\cal K}^*$, it obviously holds that
\begin{equation}\label{eqn:tau}
Q \setminus \tau (X) = (Q \setminus X)^{\circ} \quad (X \subseteq Q).
\end{equation}
We will often use 
the following relation between implicates of ${\cal K}$ and 
closure operator $\tau$ of ${\cal K}^*$, 
which was given by Dowling-M\"{u}ller~\cite{Dowling93,M1989} in the literature of KST.
\begin{lem}[{\cite[Proposition 4.4 (i)]{M1989}; 
		see \cite[Proposition 2.8]{Dowling93}}]\label{lem:imp_equiv}
	Let ${\cal K}$ be a union-closed family and let $\tau$ be the closure operator of the dual ${\cal K}^*$. For a rule $(A,q)$,  the following conditions are equivalent:
	\begin{itemize}
		\item[{\rm (i)}] $(A,q)$ is an implicate of ${\cal K}$.
		\item[{\rm (ii)}] $q \in \tau (A)$.
		\item[{\rm (iii)}] $q \not \in (Q \setminus A)^\circ$.
	\end{itemize}
\end{lem}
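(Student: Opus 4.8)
The plan is to prove the chain (ii)~$\Leftrightarrow$~(iii)~$\Leftrightarrow$~(i), since (ii)~$\Leftrightarrow$~(iii) is essentially a restatement of the already-established identity \eqref{eqn:tau}, while the genuine content lies in relating the implicate condition to the maximal-member operator $(\cdot)^{\circ}$. I would organize the whole argument around $(\cdot)^{\circ}$, using that $(Q \setminus A)^{\circ}$ is by definition the unique maximal member of ${\cal K}$ contained in $Q \setminus A$ (well-defined because $\emptyset \in {\cal K}$).

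For (ii)~$\Leftrightarrow$~(iii), I would simply apply \eqref{eqn:tau} with $X = A$, giving $Q \setminus \tau(A) = (Q \setminus A)^{\circ}$. Taking complements, $q \in \tau(A)$ holds if and only if $q \notin Q \setminus \tau(A) = (Q \setminus A)^{\circ}$, which is precisely condition (iii). No further work is needed for this part.

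For (i)~$\Leftrightarrow$~(iii), I would first rewrite the implicate condition contrapositively: $(A,q)$ is an implicate of ${\cal K}$ exactly when every member $X \in {\cal K}$ with $X \cap A = \emptyset$ satisfies $q \notin X$. Since $X \cap A = \emptyset$ is the same as $X \subseteq Q \setminus A$, this says that no member of ${\cal K}$ lying below $Q \setminus A$ contains $q$. The key observation is that $(Q \setminus A)^{\circ}$ dominates all such members, so the condition holds for every such $X$ if and only if it holds for the single largest one. Concretely, if $q \notin (Q \setminus A)^{\circ}$ then, because any $X \in {\cal K}$ with $X \subseteq Q \setminus A$ satisfies $X \subseteq (Q \setminus A)^{\circ}$, we get $q \notin X$; conversely, if $q \in (Q \setminus A)^{\circ}$, then $(Q \setminus A)^{\circ}$ itself is a member of ${\cal K}$ contained in $Q \setminus A$ and containing $q$, witnessing that $(A,q)$ is not an implicate. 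This yields (i)~$\Leftrightarrow$~(iii) and closes the equivalence.

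I do not anticipate a serious obstacle: the only point requiring care is recognizing that $(Q \setminus A)^{\circ}$ is the unique maximal member of ${\cal K}$ contained in $Q \setminus A$, so that verifying the acceptance condition over all members $X \subseteq Q \setminus A$ reduces to verifying it on $(Q \setminus A)^{\circ}$ alone. Everything else is a direct unwinding of the definition of implicate together with identity \eqref{eqn:tau}.
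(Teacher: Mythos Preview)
Your proof is correct and essentially matches the paper's: both obtain (ii)~$\Leftrightarrow$~(iii) directly from \eqref{eqn:tau}, and the remaining equivalence is in each case just the observation that the implicate condition quantifies over all members of ${\cal K}$ disjoint from $A$. The only cosmetic difference is that the paper links (i) to (ii) by expanding $\tau(A)$ as the intersection $\bigcap\{Q\setminus X \mid X\in{\cal K},\ X\cap A=\emptyset\}$, whereas you link (i) to (iii) by using that $(Q\setminus A)^{\circ}$ is the largest such $X$; these are dual unpackings of the same fact.
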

\begin{proof}
	The equivalence between (ii) and (iii) follows from (\ref{eqn:tau}).
	Condition (ii) is equivalent to: $q$ belongs to $\bigcap \{ Q \setminus X \mid  X \in {\cal K}: X \cap A =\emptyset \}$.
	This is further equivalent to: 
	every $X \in {\cal K}$ disjoint with $A$ satisfies $q \not \in X$. 
	This is nothing but condition (i).
\end{proof}

We next give a characterization of the entailment of an antimatroid.
As Koppen did in~\cite[p.142, (27)]{K1998},
such a characterization of ${\cal R}$ is directly obtained from  {\bf (Accessibility)} as:
\begin{description}
	\item For every subset $X \subseteq Q$ with $X \cap A \neq \emptyset$ or $q \not \in X$ for all $(A,q) \in {\cal R}$, there exists an element $x$ in $X$ such that $(X - x) \cap A \neq \emptyset$ or $q \not \in X - x$ for all $(A,q) \in {\cal R}$.
\end{description}
This is a rather cumbersome condition. 
We here provide another useful characterization, which is directly obtained 
from the anti-exchange property of convex geometries dual to antimatroids.

A {\em convex geometry}~\cite{EdelmanJamison85} is 
an intersection-closed family with its closure operator $\tau$ satisfying
the following {\em anti-exchange property} (AE):
\begin{description}
\item[(AE)] for $X\subseteq Q$ and distinct $y, z\in Q$, if $y, z\notin \tau(X)$ and $z\in\tau(X + y)$ then $y\notin \tau(X + z)$. 
\end{description}
It is well known that a family is a convex geometry 
if and only if its dual is an antimatroid~\cite[Theorem 1.3]{KLS1991}.
Therefore, by using Lemma~\ref{lem:imp_equiv} and Theorem~\ref{thm:connection}, 
the condition (AE) is translated into a condition for an entailment ${\cal R}$ 
to have an antimatroid ${\cal K}({\cal R}) ( = {\cal A}({\cal R}))$.
We will use the following slightly different characterization.    
\begin{prop}\label{prop:AE}
For an entailment ${\cal R}$ on $Q$, 
the family $\mathcal{K}({\cal R})$ is an antimatroid if and only if ${\cal R}$ satisfies:
\begin{description}
	\item[(AE$'$)] for $X\subseteq Q$ and distinct $y, z\in Q$, if 
	$(X + y){\cal R} z$ and $(X + z) {\cal R} y$ hold, 
	then $X {\cal R} y$ and $X {\cal R} z$ hold.
\end{description} 
\end{prop}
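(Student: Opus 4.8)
The plan is to pass to the dual convex geometry and its closure operator, turning (AE$'$) into the anti-exchange property (AE). First I would record that, since $\emptyset\in\mathcal{K}(\mathcal{R})$ and $\mathcal{K}(\mathcal{R})$ is union-closed for an arbitrary set of rules, its dual $\mathcal{J}:=\mathcal{K}(\mathcal{R})^{*}$ is intersection-closed and contains $Q$; write $\tau$ for the closure operator of $\mathcal{J}$. By the duality between antimatroids and convex geometries~\cite[Theorem~1.3]{KLS1991}, $\mathcal{K}(\mathcal{R})$ is an antimatroid exactly when $\mathcal{J}$ is a convex geometry, that is, exactly when $\tau$ satisfies (AE). So it remains to show that (AE) for $\tau$ and (AE$'$) for $\mathcal{R}$ say the same thing.

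The bridge is the hypothesis that $\mathcal{R}$ is an entailment: by Theorem~\ref{thm:connection}(2) it equals the set of all implicates of $\mathcal{K}(\mathcal{R})$, so Lemma~\ref{lem:imp_equiv} supplies the dictionary $A\,\mathcal{R}\,q\iff q\in\tau(A)$ for all $A\subseteq Q$ and $q\in Q$. I would then observe that, under this dictionary, (AE$'$) translates verbatim into the condition $(\star)$: for every $X\subseteq Q$ and distinct $y,z\in Q$, if $z\in\tau(X+y)$ and $y\in\tau(X+z)$, then $y\in\tau(X)$ and $z\in\tau(X)$. Thus the proposition reduces to the equivalence of (AE) and $(\star)$ as statements about the closure operator $\tau$.

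The easy direction is $(\star)\Rightarrow$(AE): under the hypotheses $y,z\notin\tau(X)$ and $z\in\tau(X+y)$ of (AE), were $y\in\tau(X+z)$ also true, $(\star)$ would force $y\in\tau(X)$, a contradiction, so $y\notin\tau(X+z)$. For the converse (AE)$\Rightarrow(\star)$, I would assume $z\in\tau(X+y)$, $y\in\tau(X+z)$, $y\neq z$, and split into cases. If $y\in\tau(X)$, then $X+y\subseteq\tau(X)$, so monotonicity and idempotency of $\tau$ give $\tau(X+y)=\tau(X)$, hence $z\in\tau(X)$ and we are done; the case $z\in\tau(X)$ is symmetric; and in the remaining case $y,z\notin\tau(X)$, condition (AE) applies to $X,y,z$ and yields $y\notin\tau(X+z)$, contradicting the assumption. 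So $(\star)$ holds. Chaining the equivalences $\mathcal{K}(\mathcal{R})$ antimatroid $\iff$ (AE) $\iff$ $(\star)$ $\iff$ (AE$'$) finishes the argument.

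The one delicate point I anticipate is the implication (AE)$\Rightarrow(\star)$: (AE) by itself only delivers the weaker conclusion $y\notin\tau(X+z)$, and the upgrade to ``$y\in\tau(X)$ and $z\in\tau(X)$'' — which is precisely what makes (AE$'$) stronger than a naive elementwise rewriting of (AE) — has to be extracted from idempotency of the closure operator in the side cases where $y$ or $z$ already lies in $\tau(X)$. I expect nothing else in the argument to require more than routine bookkeeping.
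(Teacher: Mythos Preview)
Your proposal is correct and follows essentially the same route as the paper: reduce the proposition to the equivalence of (AE) and (AE$'$), get (AE$'$)$\Rightarrow$(AE) by contraposition, and for (AE)$\Rightarrow$(AE$'$) first extract ``$X\mathcal{R}y$ or $X\mathcal{R}z$'' from (AE) and then upgrade to ``and''. The only cosmetic difference is that the paper performs the upgrade in the entailment language via (E1)--(E2) (from $X\mathcal{R}y$ and $(X+y)\mathcal{R}z$ conclude $X\mathcal{R}z$), whereas you do the equivalent step on the closure side via idempotency of $\tau$; under the dictionary $A\mathcal{R}q\iff q\in\tau(A)$ these are the same argument.
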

\begin{proof}
Observe that (AE$'$) implies (AE). 
We show the converse.
Suppose that entailment ${\cal R}$ satisfies (AE).
To show (AE$'$), suppose that $(X + y)\mathcal{R}z$ and $(X + z)\mathcal{R}y$ hold. 
By (AE), $X\mathcal{R}y$ or $X\mathcal{R}z$ holds.
By symmetry, we can assume that $X\mathcal{R}y$ holds. 
Then, by (E1), $X {\cal R} x$ holds for all $x \in X +  y$. 
By (E2) and $(X + y)\mathcal{R}z$, we have $X\mathcal{R}z$, and obtain (AE$'$). 
\end{proof}
Based on this characterization,  in Section~\ref{sec:algorithms}
we develop an algorithm to generate all implicates of ${\cal A}({\cal R})$.

\subsection{Korte-Lov\'{a}sz representation}\label{sec:circuit}

We here explain the relationship between 
the above construction of an antimatroid from Horn rules
and the construction of an antimatroid from rooted sets by Korte and Lov\'{a}sz \cite{KL1984}; 
see also \cite[Section III. 3]{KLS1991}. 
A {\em rooted set} $(C,r)$ is a pair of a subset $C \subseteq Q$ and an element $r$ in $C$, 
where $r$ is called the {\em root} of $(C,r)$.
For a given family ${\cal C}$ of rooted sets,  
Korte and Lov\'{a}sz  defined a family ${\cal L}({\cal C}) \subseteq 2^{Q}$ as follows:
A subset $X$ is a member of ${\cal L}({\cal C})$ if and only 
if there is an ordering $x_1,x_2,\ldots,x_k$ of the elements in $X$ 
such that $(C,x_i) \in {\cal C}$ implies $C \cap \{x_1,x_2,\ldots,x_{i-1}\} \neq \emptyset$ for each $i$.
In \cite{KN13}, a family (or a language) determined in this way 
is called a {\em transversal precedence structure}.

Korte and Lov\'{a}sz showed that ${\cal L}({\cal C})$ 
is always an antimatroid~\cite[Lemma 3.2]{KLS1991}.
In fact, ${\cal L}({\cal C})$ is equal to ${\cal A}({\cal R})$, 
provided a nontrivial rule $(A,q)$ is associated with a rooted set $(A + q, q)$.
The map $(A,q) \mapsto (A + q,q)$ 
is a bijection between the set of all nontrivial rules and the set of all rooted sets.
\begin{thm}\label{thm:rep_Horn_circuit}
For a set $\mathcal{R}$ of nontrivial rules, let ${\cal C}$ 
be the set of rooted sets defined by ${\cal C} := \{ (A + q, q)  \mid (A,q) \in {\cal R}\}$.
Then it holds $\mathcal{A}(\mathcal{R})=\mathcal{L}({\mathcal{C}})$. 
\end{thm}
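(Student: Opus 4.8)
The plan is to show the two families $\mathcal{A}(\mathcal{R})$ and $\mathcal{L}(\mathcal{C})$ coincide by proving each containment, exploiting the close parallel between the defining mechanisms: a tight path from $\emptyset$ to $X$ in $\mathcal{K}(\mathcal{R})$ versus an admissible ordering of the elements of $X$ with respect to $\mathcal{C}$. First I would unwind the definitions to make the correspondence transparent. A tight path $\emptyset = X_0, X_1, \dots, X_k = X$ in $\mathcal{K}(\mathcal{R})$ is exactly a choice of an ordering $x_1, \dots, x_k$ of the elements of $X$ (with $X_i = \{x_1, \dots, x_i\}$) such that every intermediate set $X_i$ is accepted by every rule in $\mathcal{R}$. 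So $X \in \mathcal{A}(\mathcal{R})$ iff there is an ordering $x_1, \dots, x_k$ of $X$ with the property that for every $i$ and every $(A,q) \in \mathcal{R}$, if $q \in X_i$ then $X_i \cap A \neq \emptyset$. Meanwhile $X \in \mathcal{L}(\mathcal{C})$ iff there is an ordering $x_1, \dots, x_k$ of $X$ such that for every $i$, if $(C, x_i) \in \mathcal{C}$ then $C \cap \{x_1, \dots, x_{i-1}\} \neq \emptyset$; under the bijection $(A,q) \mapsto (A+q, q)$ this reads: for every $i$ and every $(A,q) \in \mathcal{R}$, if $x_i = q$ then $(A+q) \cap \{x_1, \dots, x_{i-1}\} \neq \emptyset$.

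The key observation is that, for a rule $(A,q)$ and a set $X_i = \{x_1, \dots, x_i\}$ with $q \in X_i$, the Horn-rule acceptance condition ``$q \in X_i \Rightarrow X_i \cap A \neq \emptyset$'' should be compared against the Korte--Lov\'asz condition applied at the step where $q$ is added. If $q = x_j$ for some $j \le i$, the Korte--Lov\'asz condition at step $j$ says $(A+q) \cap \{x_1, \dots, x_{j-1}\} \neq \emptyset$, i.e.\ $A \cap \{x_1, \dots, x_{j-1}\} \neq \emptyset$ (since $q \notin \{x_1,\dots,x_{j-1}\}$ and, because $(A,q)$ is nontrivial, $q \notin A$). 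This is stronger than requiring merely $X_i \cap A \ne \emptyset$: it pins the witness to occur strictly before $q$. So the two notions are not literally identical term-by-term, and the argument must show they nonetheless generate the same family. The natural route is: (i) if an ordering is admissible for $\mathcal{C}$ in the Korte--Lov\'asz sense, then every prefix $X_i$ is accepted by every rule of $\mathcal{R}$ (a nontrivial rule $(A,q)$ with $q \in X_i$ has $q = x_j$, $j \le i$, and $A \cap \{x_1,\dots,x_{j-1}\} \ne \emptyset \subseteq X_i \cap A$; trivial rules are absent by hypothesis), hence $\mathcal{L}(\mathcal{C}) \subseteq \mathcal{A}(\mathcal{R})$; (ii) conversely, given $X \in \mathcal{A}(\mathcal{R})$ with a witnessing tight path, the same ordering is already admissible for $\mathcal{C}$ — here one uses accessibility more carefully. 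Actually the clean way for (ii) is to note that if $\emptyset = X_0, \dots, X_k = X$ is a tight path in $\mathcal{K}(\mathcal{R})$ and $x_j$ is the element added at step $j$, then for any $(A,q) \in \mathcal{R}$ with $q = x_j$: since $X_j \in \mathcal{K}(\mathcal{R})$ and $q \in X_j$, we get $X_j \cap A \ne \emptyset$; but also we must rule out that the only witness is $x_j$ itself, which is automatic because $q \notin A$ (nontriviality), so $\{x_1, \dots, x_{j-1}\} \cap A = (X_j - x_j) \cap A \ne \emptyset$. That is precisely the Korte--Lov\'asz condition, giving $X \in \mathcal{L}(\mathcal{C})$, so $\mathcal{A}(\mathcal{R}) \subseteq \mathcal{L}(\mathcal{C})$.

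Putting these together yields $\mathcal{A}(\mathcal{R}) = \mathcal{L}(\mathcal{C})$. I would also remark in passing that $\mathcal{L}(\mathcal{C})$ is known to be an antimatroid (cited as \cite[Lemma 3.2]{KLS1991}), which provides a consistency check with Theorem~\ref{thm:uniqueness}, though it is not logically needed once the set equality is established directly. The main obstacle — really the only subtlety — is the asymmetry noted above: the Horn-acceptance condition is a priori weaker than the Korte--Lov\'asz precedence condition, and one must see that nontriviality of the rules ($q \notin A$) forces the witness in $X_j \cap A$ to lie in the strict prefix $\{x_1, \dots, x_{j-1}\}$, which is exactly what closes the gap. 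Everything else is bookkeeping: translating ``tight path'' into ``ordering with accepted prefixes'' and invoking that $(A,q) \mapsto (A+q,q)$ is a bijection between nontrivial rules and rooted sets so that $\mathcal{C}$ and $\mathcal{R}$ carry the same information.
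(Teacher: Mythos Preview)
Your proof is correct and follows essentially the same approach as the paper's: both directions translate a tight path in $\mathcal{K}(\mathcal{R})$ into an admissible ordering for $\mathcal{C}$ and vice versa. You are more explicit than the paper about the role of nontriviality ($q \notin A$) in the direction $\mathcal{A}(\mathcal{R}) \subseteq \mathcal{L}(\mathcal{C})$, which the paper leaves implicit when asserting that ``the ordering $x_1,\dots,x_k$ of $X$ fulfills the definition of $\mathcal{L}(\mathcal{C})$.''
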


\begin{proof}
	Pick $X$ from $\mathcal{L}({\mathcal{C}})$. 
	By definition, there is an ordering $x_1,x_2,\ldots,x_k$ of $X$ such that
    $(A, x_i) \in {\cal R}$ implies 
    $A \cap \{x_1,x_2,\ldots,x_{i-1}\} \neq \emptyset$.
    This means that $\{x_1,x_2,\ldots,x_i\} \in {\cal K}({\cal R})$ for $i=1,2,\ldots,k$.
	Thus $\{x_1,x_2,\ldots,x_i\}$ $(i=1,2,\ldots,k)$ form a tight path from $\emptyset$ to $X$ in ${\cal K}({\cal R})$, and $X \in {\cal A}({\cal R})$.
	Conversely, pick $X$ from ${\cal A}({\cal R})$.
	Then there is a tight path $\emptyset, \{x_{1}\}, \{x_{1}, x_{2}\}, \ldots , \{x_{1}, x_{2}\ldots , x_{k}\}=X$ in $\mathcal{K}(\mathcal{R})$.
    The ordering $x_1,x_2,\ldots,x_k$ of $X$ fulfills the definition of ${\cal L}({\cal C})$, hence  $X \in {\cal L}({\cal C})$. Thus we conclude that $\mathcal{A}(\mathcal{R})=\mathcal{L}({\mathcal{C}})$.
\end{proof}
Korte and Lov\'{a}sz introduced a natural class of rooted sets
determined by and determining an antimatroid.
Let ${\cal A}$ be an antimatroid,  ${\cal A}^*$ the convex geometry dual to ${\cal A}$, 
and $\tau$ the closure operator of ${\cal A}^*$.
A subset $X$ is said to be {\em free} if $\{X \cap K \mid K \in {\cal A}\}$ is equal to $2^X$.
A {\em circuit} is a subset $C$ such that $C$ is not free and every proper subset of $C$ is free\footnote{Any free subset is a subset of $Q^{\circ}$, 
and circuits are for the proper antimatroid on $Q^{\circ}$.}.
It is known in \cite[Lemma 3.2]{KLS1991} that
there is an unique element $r$, called the {\em root},  in a circuit $C$
such that $\tau(C) -  r \not \in {\cal A}^*$ 
and $\tau(C) - s \in {\cal A}^*$ for $s \in C - r$.
Now a circuit $C$ is regarded as a rooted set $(C,r)$ for the root $r$ of $C$.
A {\em critical circuit} is a rooted set $(C,r)$ 
such that $\tau(C) - r \not \in  {\cal A}^*$ 
and $\tau(C) - r - s \in  {\cal A}^*$ for $s \in C - r$. 
It is known in \cite[p.31]{KLS1991} that a critical circuit is indeed a circuit.
Circuits can determine the original antimatroid, 
and such circuits always contain all critical circuits, as follows.
\begin{thm}[{\cite{KL1984}; see \cite[Theorem 3.11]{KLS1991}}]\label{thm:circuit}
	Let ${\cal A}$ be an antimatroid, and let ${\cal S}$ be the set of critical circuits of ${\cal A}$.
	Then the following hold:
	\begin{itemize}
		\item[{\rm (1)}] ${\cal A} = {\cal L}({\cal S})$.
		\item[{\rm (2)}] For any family ${\cal C}$ of circuits of ${\cal A}$, 
		if ${\cal A} = {\cal L}({\cal C})$, then ${\cal S} \subseteq {\cal C}$.
	\end{itemize}
\end{thm}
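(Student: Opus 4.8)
The plan is to prove the two assertions separately, working throughout with the closure operator $\tau$ of the dual convex geometry ${\cal A}^{*}$; since free sets and circuits concern the proper antimatroid on $Q^{\circ}$, we may assume ${\cal A}$ is proper. Two elementary facts will be used repeatedly. First, a set $X$ is free if and only if $x\notin\tau(X-x)$ for every $x\in X$; the nontrivial direction holds because, for such an $X$ and any $Y\subseteq X$, the closed set $\bigcap_{x\in Y}\tau(X-x)$ meets $X$ in exactly $X\setminus Y$. Second, the Krein--Milman-type property of convex geometries (see \cite{EdelmanJamison85,KLS1991}): every closed $F$ satisfies $F=\tau(\mathrm{ex}(F))$, where $\mathrm{ex}(F)$ is its set of extreme points, and $\mathrm{ex}(F)$ is contained in every generating set of $F$. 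Combining these yields, for any circuit $(C,r)$, that $r\in\tau(C-r)$, $\tau(C-r)=\tau(C)$, and $C-r=\mathrm{ex}(\tau(C))$: indeed $\tau(C)-r\notin{\cal A}^{*}$ says $r$ is not extreme in $\tau(C)$, so $\mathrm{ex}(\tau(C))\subseteq C-r$, whence $\tau(C-r)\supseteq\tau(\mathrm{ex}(\tau(C)))=\tau(C)\ni r$, and then freeness of the proper subset $C-r$ gives $C-r=\mathrm{ex}(\tau(C-r))=\mathrm{ex}(\tau(C))$.

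For part~(1), the inclusion ${\cal A}\subseteq{\cal L}({\cal S})$ is the easy half. By the displayed fact, for every critical circuit $(C,r)\in{\cal S}$ the rule $(C-r,r)$ satisfies $r\in\tau(C-r)$, so by Lemma~\ref{lem:imp_equiv} it is an implicate of ${\cal A}$; hence ${\cal A}\subseteq{\cal K}({\cal R})$ where ${\cal R}:=\{(C-r,r)\mid(C,r)\in{\cal S}\}$. Since ${\cal L}({\cal S})={\cal A}({\cal R})=\check{{\cal K}({\cal R})}$ by Theorem~\ref{thm:rep_Horn_circuit} and this is the maximal antimatroid inside ${\cal K}({\cal R})$ by Theorem~\ref{thm:uniqueness}, the antimatroid ${\cal A}$ is contained in it.

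The reverse inclusion ${\cal L}({\cal S})\subseteq{\cal A}$ is where the real work lies, and it rests on the following lemma, which I expect to be the main obstacle: \emph{if $F$ is closed, $q\in F$, and $q\in\tau(F-q)$ (equivalently $F-q\notin{\cal A}^{*}$), then some critical circuit $(C,q)$ has $C\subseteq F$.} To prove it I would pick $F'$ minimal among closed subsets of $F$ that contain $q$ with $q\in\tau(F'-q)$, and set $C:=\mathrm{ex}(F')+q$ (here $q\notin\mathrm{ex}(F')$, so this is a genuine addition, and $\tau(C)=F'$). Then $\tau(C)-q=F'-q\notin{\cal A}^{*}$, and for $s\in C-q=\mathrm{ex}(F')$ one checks $\tau(C)-q-s=F'-q-s\in{\cal A}^{*}$: otherwise its closure $G$ satisfies $F'-q-s\subsetneq G\subseteq F'-s$ (as $F'-s$ is closed), forcing $G=F'-s$, whence $q\in G$ and $q\in\tau(G-q)$, so $G$ is a smaller member of the family than $F'$ — a contradiction. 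Thus $(C,q)$ is a critical circuit by definition. Granting the lemma, if ${\cal L}({\cal S})\setminus{\cal A}\neq\emptyset$ I would take a minimal such $X$; by accessibility of the antimatroid ${\cal L}({\cal S})$ there is $x\in X$ with $X-x\in{\cal L}({\cal S})$, hence $X-x\in{\cal A}$ by minimality. Then $F:=Q\setminus(X-x)$ is closed and $x\in F$, but $F-x=Q\setminus X$ is not closed (since $X\notin{\cal A}$), so $x\in\tau(F-x)$ and the lemma supplies a critical circuit $(C,x)$ with $C-x\subseteq Q\setminus X$; this contradicts $X\in{\cal L}({\cal S})$, because in any witnessing ordering of $X$ the elements preceding $x$ lie in $X$ and therefore miss $C-x$.

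For part~(2), fix a critical circuit $(C_{0},r_{0})$, set $F_{0}:=\tau(C_{0})$, and suppose ${\cal A}={\cal L}({\cal C})$ for a family ${\cal C}$ of circuits with $(C_{0},r_{0})\notin{\cal C}$. The structural point to establish is that $F_{0}$ is minimal among closed sets $F$ with $r_{0}\in F$ and $r_{0}\in\tau(F-r_{0})$: if $F\subsetneq F_{0}$ were such, then since $\mathrm{ex}(F_{0})=C_{0}-r_{0}$ generates $F_{0}$, some $s_{0}\in C_{0}-r_{0}$ lies outside $F$, so $F-r_{0}\subseteq F_{0}-r_{0}-s_{0}$, which is closed because $(C_{0},r_{0})$ is critical, giving the absurdity $r_{0}\in\tau(F-r_{0})\subseteq F_{0}-r_{0}-s_{0}$. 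It follows that $(C_{0},r_{0})$ is the only circuit with root $r_{0}$ contained in $F_{0}$: any such $(C,r_{0})$ has $\tau(C)\subseteq F_{0}$ with $r_{0}\in\tau(\tau(C)-r_{0})$, so $\tau(C)=F_{0}$ by minimality and then $C-r_{0}=\mathrm{ex}(F_{0})=C_{0}-r_{0}$. Now consider $W:=(Q\setminus F_{0})+r_{0}=Q\setminus(F_{0}-r_{0})$. Since $F_{0}-r_{0}\notin{\cal A}^{*}$ we have $W\notin{\cal A}$; but $Q\setminus F_{0}\in{\cal A}={\cal L}({\cal C})$, and appending $r_{0}$ to a witnessing ordering of $Q\setminus F_{0}$ still witnesses membership, because the only circuit of ${\cal C}$ that could obstruct this would have root $r_{0}$ and lie in $F_{0}$, i.e.\ equal $(C_{0},r_{0})$, which is excluded. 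Hence $W\in{\cal L}({\cal C})={\cal A}$, a contradiction; therefore $(C_{0},r_{0})\in{\cal C}$, and since $(C_{0},r_{0})$ was arbitrary, ${\cal S}\subseteq{\cal C}$.
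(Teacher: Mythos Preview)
The paper does not actually prove Theorem~\ref{thm:circuit}; it is quoted from \cite{KL1984,KLS1991} and used as a black box. So there is no ``paper's own proof'' to compare against. That said, your argument is correct and self-contained, and it is worth noting how it relates to what the paper \emph{does} prove nearby.

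For part~(1), your route via the lemma ``if $F$ is closed with $q\in\tau(F-q)$ then some critical circuit with root $q$ lies in $F$'' is the standard convex-geometry approach and is carried out cleanly; the minimality argument showing $F'-q-s\in{\cal A}^{*}$ is the heart of it, and your size count forcing $G=F'-s$ is right. For part~(2), your proof is essentially the circuit-specialized version of the paper's Lemma~\ref{lem:critical_in_rep}: both consider $W=(Q\setminus\tau(C_{0}))+r_{0}$, observe $W\notin{\cal A}$, and argue that omitting $(C_{0},r_{0})$ from the representing family would wrongly place $W$ in ${\cal L}({\cal C})$. The paper's lemma is slightly more general (it allows ${\cal R}$ to be an arbitrary set of rules, not just circuits), and correspondingly its conclusion is weaker ($A\subseteq A'$ rather than equality); your extra uniqueness step, showing that $(C_{0},r_{0})$ is the \emph{only} circuit with root $r_{0}$ inside $F_{0}$ via the minimality of $F_{0}$ and the identity $C-r_{0}=\mathrm{ex}(\tau(C))$, is exactly what is needed to upgrade that weaker conclusion to ${\cal S}\subseteq{\cal C}$ when ${\cal C}$ consists of circuits.
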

We develop in Section~\ref{sec:critical_o2} an algorithm 
to construct the set ${\cal S}$ of all critical circuits of ${\cal A} = {\cal A}({\cal R})$  
from ${\cal R}$.
For this purpose, we use the following Boolean function theoretic characterization of circuits.
A nontrivial implicate $(A,q)$ of ${\cal A}$ is said to be {\em prime} 
if for every $a \in A$, the rule $(A - a, q)$ is not an implicate of ${\cal A}$.
This definition is consistent with prime implicates of the corresponding Horn CNF~\cite[Definition 1.21]{Booleanfunctions}.
The set of prime implicates is shown to be equal to 
the {\em canonical direct unit implicational basis} in~\cite{BM10}.
In the case of an antimatroid (convex geometry), 
Wild~\cite{Wild94} showed that this basis is exactly the set of  all circuits.
\begin{prop}[{implied by \cite[Theorem 15, Corollary 22]{BM10} and \cite[Corollary 13]{Wild94}}]\label{prop:Wild}
	Let  ${\cal A}$ be an antimatroid.
	\begin{itemize}
		\item[{\rm (1)}] For a circuit $(C,r)$ of ${\cal A}$, the rule $(C - r, r)$ is a prime implicate of ${\cal A}$.
		\item[{\rm (2)}] For a prime implicate $(A,q)$ of ${\cal A}$, 
		the rooted set $(A + q, q)$ 
		is a circuit of ${\cal A}$.
	\end{itemize}
\end{prop}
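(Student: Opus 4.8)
\textbf{Proof proposal for Proposition~\ref{prop:Wild}.}

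The plan is to translate between the combinatorial language of free sets/circuits and the Boolean-function language of (prime) implicates, using Lemma~\ref{lem:imp_equiv} as the bridge. Recall that, writing $\tau$ for the closure operator of ${\cal A}^*$, Lemma~\ref{lem:imp_equiv} says that $(A,q)$ is an implicate of ${\cal A}$ precisely when $q \in \tau(A)$, equivalently $q \notin (Q\setminus A)^\circ$. So the first step is to reformulate freeness of a set $C$ in these terms: $C$ is free iff for every $c \in C$ one has $c \notin \tau(C - c)$, i.e., iff for each $c$ the rule $(C - c, c)$ is \emph{not} an implicate of ${\cal A}$. (One direction is immediate; for the other, if $c \notin \tau(C-c)$ for all $c$ then one builds, for any target $S \subseteq C$, a member $K \in {\cal A}$ with $K \cap C = S$ by picking for each $c \notin S$ a member of ${\cal A}$ disjoint from $\{c\}$ but containing $C - c$, and taking an appropriate union.) In the same vein, a circuit is exactly a non-free set all of whose proper subsets are free, and the characterization of its root $r$ quoted from \cite[Lemma 3.2]{KLS1991} says $r$ is the unique element with $\tau(C) - r \notin {\cal A}^*$, i.e.\ with $r \in \tau(C - r)$ after passing through \eqref{eqn:tau}.

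For part (1), let $(C,r)$ be a circuit. Since $C$ is not free while the root $r$ witnesses non-freeness, we get $r \in \tau(C - r)$, so by Lemma~\ref{lem:imp_equiv} the rule $(C - r, r)$ is an implicate of ${\cal A}$; it is nontrivial because $r \notin C - r$. To see it is prime, fix $a \in C - r$ and consider $(C - r - a, r)$. I want $r \notin \tau(C - r - a)$. Now $C - a$ is a proper subset of the circuit $C$, hence free, so by the freeness reformulation no rule $(C - a - c, c)$ with $c \in C - a$ is an implicate; taking $c = r$ gives exactly $r \notin \tau(C - a - r) = \tau(C - r - a)$. Thus $(C - r - a, r)$ is not an implicate, and primeness follows.

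For part (2), let $(A,q)$ be a prime implicate; set $C := A + q$, so $q \in C$ and, by Lemma~\ref{lem:imp_equiv}, $q \in \tau(A) = \tau(C - q)$, showing $C$ is not free. It remains to show every proper subset of $C$ is free; since freeness is inherited by subsets, it suffices to show that for every $a \in C$ the set $C - a$ is free, i.e.\ that no $(C - a - c, c)$ with $c \in C - a$ is an implicate. Split into two cases. If $a \neq q$, then $q \in C - a$ and I must rule out, among others, the rule $(C - a - q, q) = (A - a, q)$ being an implicate — but that is precisely what primeness of $(A,q)$ forbids; for the remaining elements $c \in C - a$ with $c \neq q$ one has $c \in A$, and here I use that ${\cal A}$ is an antimatroid (not just any union-closed family): the translated anti-exchange condition, via Proposition~\ref{prop:AE} applied to the full entailment of ${\cal A}$, together with the fact that $(C - a - q, q)$ is not an implicate, forces $(C - a - c, c)$ not to be an implicate either. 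If instead $a = q$, then $C - a = A$, and I need $A$ itself to be free, i.e.\ $(A - c, c)$ not an implicate for every $c \in A$; again the primeness of $(A,q)$ handles the ``direction towards $q$'' and (AE$'$) propagates it to the directions towards the other elements of $A$. Concluding that $C$ is non-free with all proper subsets free exhibits $C$ as a circuit, and $q$ is its root since $q \in \tau(C - q)$; hence $(A + q, q) = (C, q)$ is a circuit of ${\cal A}$.

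The main obstacle I anticipate is the bookkeeping in part (2), case $a \neq q$, $c \neq q$: showing that a single non-implication $(C - a - q, q)$ together with the anti-exchange property (AE$'$) really does imply non-implication of all the sibling rules $(C - a - c, c)$. This is where convexity is genuinely used and where one has to be careful to invoke (AE$'$) with the correct choice of the ``base'' set $X$ (here $X = C - a - c - q$, with the pair $\{c, q\}$ playing the role of $\{y,z\}$) and to check that the hypotheses $(X + y){\cal R}z$, $(X+z){\cal R}y$ needed by (AE$'$) are available — one of them from $(A,q)$ being an implicate and (E1)/(E2), the other to be derived — so that its conclusion can be contraposed. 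All other steps are routine applications of Lemma~\ref{lem:imp_equiv} and the definitions of free set and circuit.
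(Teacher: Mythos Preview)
Your Part~(1) and the freeness reformulation are essentially correct and close in spirit to the paper (though your parenthetical sketch of the converse direction of the reformulation is garbled: one should pick, for each $c\in S$, a member $K_c\in{\cal A}$ with $K_c\cap C=\{c\}$ and take $K=\bigcup_{c\in S}K_c$; your ``disjoint from $\{c\}$ but containing $C-c$'' is the wrong witness). The case $a=q$ in Part~(2) also goes through, though not via (AE$'$): if $c\in\tau(A-c)$ then $A\subseteq\tau(A-c)$, whence $q\in\tau(A)=\tau(A-c)$, contradicting primeness. This is just monotonicity of $\tau$ (equivalently (E2)).

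The genuine gap is in Part~(2), case $a\neq q$, sub-case $c\in A-a$. Your plan to invoke (AE$'$) with base $X=A-a-c$ and pair $\{c,q\}$ cannot work. To extract anything from (AE$'$) you would need both premises $(X+c)\,{\cal R}\,q$ and $(X+q)\,{\cal R}\,c$, so that (AE$'$) yields $X\,{\cal R}\,q$ and you contradict primeness. But $(X+c)\,{\cal R}\,q$ is exactly $(A-a,q)$, which primeness says is \emph{not} an implicate. So one premise of (AE$'$) already fails, and (AE$'$) says nothing about the other premise $(X+q)\,{\cal R}\,c=(C-a-c,c)$. You cannot obtain $(A-a,q)$ ``from $(A,q)$ and (E1)/(E2)'' as you suggest: (E1)/(E2) only let you enlarge the body of an implicate, never shrink it.

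The paper handles this sub-case by leaving the rule language and working directly with members of ${\cal A}$. From primeness at $c$ one gets $K\in{\cal A}$ with $q\in K$ and $K\cap A=\{c\}$; choosing $K$ minimal with these two properties and applying \emph{accessibility} forces some $K-e\in{\cal A}$, and minimality together with the implicate $(A,q)$ rule out every $e$ except $e=q$. Then $K-q\in{\cal A}$ satisfies $(K-q)\cap(C-a)=\{c\}$, witnessing that $(C-a-c,c)$ is not an implicate. So the antimatroid hypothesis enters through accessibility applied to a carefully chosen minimal member, not through a contraposed (AE$'$) on rules.
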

For completeness, we give a direct proof in Appendix.
It should be noted that
the circuit concept is extended to general closure spaces/union-closed families~\cite{KN13}, 
and an analogous characterization holds~\cite[Section 3.3]{Wild14survey}.

In particular, 
for any union-closed family ${\cal K}$, the set ${\cal R}$ of all prime implicates 
determines ${\cal K}$ as ${\cal K} = {\cal K}({\cal R})$. 
Thus we have the following.
\begin{cor}[{\cite{KL1984}; see~\cite[Lemma 2]{Dietrich87}}]\label{cor:A(R)=K(R)}
	Let ${\cal A}$ be an antimatroid,  ${\cal C}$ the set of all circuits in ${\cal A}$, and ${\cal R} := \{(C - r, r) \mid (C, r) \in {\cal C}\}$ the corresponding set of rules.
   Then it holds
   \[
   {\cal A} = {\cal A}({\cal R}) = {\cal K}({\cal R}).
   \]
\end{cor}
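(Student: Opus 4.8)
The plan is to show that $\mathcal{R}$ is precisely the set of all prime implicates of $\mathcal{A}$, to deduce $\mathcal{A} = \mathcal{K}(\mathcal{R})$ directly from that, and then to read off $\mathcal{A} = \mathcal{A}(\mathcal{R})$ from the maximality statement in Theorem~\ref{thm:uniqueness}.

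First I would identify $\mathcal{R}$. Every circuit $(C,r)$ has $r \notin C - r$, so each rule $(C-r,r) \in \mathcal{R}$ is nontrivial, and under the bijection $(A,q) \mapsto (A+q,q)$ between nontrivial rules and rooted sets the set $\mathcal{R}$ corresponds exactly to $\mathcal{C}$. By Proposition~\ref{prop:Wild}(1), each $(C-r,r)$ is a prime implicate of $\mathcal{A}$; conversely, by Proposition~\ref{prop:Wild}(2), the rooted set $(A+q,q)$ attached to any prime implicate $(A,q)$ of $\mathcal{A}$ is a circuit, hence lies in $\mathcal{C}$, so $(A,q)\in\mathcal{R}$. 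Therefore $\mathcal{R}$ equals the set of all prime implicates of $\mathcal{A}$.

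Next I would prove $\mathcal{A} = \mathcal{K}(\mathcal{R})$. The inclusion $\mathcal{A}\subseteq\mathcal{K}(\mathcal{R})$ is immediate, since every rule of $\mathcal{R}$ is an implicate of $\mathcal{A}$ and so is accepted by every member of $\mathcal{A}$. For the reverse, take $X\in\mathcal{K}(\mathcal{R})$ and let $X^{\circ}$ be the largest member of $\mathcal{A}$ contained in $X$; I claim $X=X^{\circ}$. If not, pick $q\in X\setminus X^{\circ}$ and set $A:=Q\setminus X$. Then $(Q\setminus A)^{\circ}=X^{\circ}\not\ni q$, so by Lemma~\ref{lem:imp_equiv} the rule $(A,q)$ is an implicate of $\mathcal{A}$; choosing $A'\subseteq A$ inclusion-minimal with $(A',q)$ still an implicate yields a prime implicate $(A',q)\in\mathcal{R}$. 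Since $X$ accepts $(A',q)$ and $q\in X$, we get $X\cap A'\neq\emptyset$, contradicting $A'\subseteq A=Q\setminus X$. Hence $X=X^{\circ}\in\mathcal{A}$. (This is the antimatroid instance of the general fact, quoted just before the corollary, that the prime implicates form a unit implicational basis of a union-closed family; one may cite it instead of repeating the argument.) Finally, since $\mathcal{A}(\mathcal{R})=\check{\mathcal{K}}(\mathcal{R})$ is by Theorem~\ref{thm:uniqueness} the unique maximal antimatroid contained in $\mathcal{K}(\mathcal{R})=\mathcal{A}$, and $\mathcal{A}$ is itself an antimatroid contained in $\mathcal{A}$, we get $\mathcal{A}\subseteq\mathcal{A}(\mathcal{R})\subseteq\mathcal{K}(\mathcal{R})=\mathcal{A}$, which gives the desired chain $\mathcal{A}=\mathcal{A}(\mathcal{R})=\mathcal{K}(\mathcal{R})$.

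The only step with real content is $\mathcal{A}=\mathcal{K}(\mathcal{R})$; everything else is bookkeeping around the definitions. The pitfall I expect to have to handle carefully is the degenerate case $A'=\emptyset$, which occurs exactly when $q\notin Q^{\circ}$: one must check that $(\emptyset,q)$ is then a legitimate (nontrivial) prime implicate lying in $\mathcal{R}$, so that the same contradiction $X\cap\emptyset\neq\emptyset$ applies. An alternative route, which avoids Lemma~\ref{lem:imp_equiv}, is to combine Theorem~\ref{thm:rep_Horn_circuit} (giving $\mathcal{A}(\mathcal{R})=\mathcal{L}(\mathcal{C})$) with Theorem~\ref{thm:circuit}(1) and the antitonicity of $\mathcal{L}$ applied to $\mathcal{S}\subseteq\mathcal{C}$ for the inclusion $\mathcal{L}(\mathcal{C})\subseteq\mathcal{A}$; there the extra work is to verify $\mathcal{A}\subseteq\mathcal{L}(\mathcal{C})$ by converting a tight path for a given $X\in\mathcal{A}$ into an admissible ordering, using that each $(C-r,r)$ is an implicate of $\mathcal{A}$.
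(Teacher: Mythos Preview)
Your proposal is correct and follows the paper's own route essentially verbatim. The paper does not spell out a proof: it simply notes (just before the corollary) that for any union-closed family the set of all prime implicates is an implicational basis, combines this with Proposition~\ref{prop:Wild} to get $\mathcal{A}=\mathcal{K}(\mathcal{R})$, and leaves $\mathcal{A}=\mathcal{A}(\mathcal{R})$ implicit; you do exactly this, except that you unpack the cited ``prime implicates determine the family'' step via Lemma~\ref{lem:imp_equiv} rather than quoting it, and you close with the explicit appeal to Theorem~\ref{thm:uniqueness}. Your flagged degenerate case $A'=\emptyset$ and the alternative route through Theorems~\ref{thm:rep_Horn_circuit} and~\ref{thm:circuit} are both more care than the paper itself takes.
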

As remarked in \cite[p. 137]{Wild94} and \cite[Example 21]{KN13}, 
the set ${\cal R}^*$ of critical rules (circuits) of an antimatroid ${\cal A}$ is not necessarily 
an implicational basis of ${\cal A}$, i.e., 
${\cal A} \subseteq {\cal K}({\cal R}^*)$ possibly strict.

\section{Algorithms}\label{sec:algorithms}

\subsection{Membership and inference problems}

We give linear time algorithms for the membership and inference problems defined in the introduction. 
From now on, let $n$ denote the cardinality of the ground set $Q$. 
The size $l({\cal R}) := \sum_{(A,q)\in\mathcal{R}}(|A|+1)$ of input $\mathcal{R}$ is simply denoted by $l$. 
We tacitly assume that $l \geq n$.
We first provide a linear time algorithm to 
compute the maximum member $X^{\circ} \subseteq X$ in ${\cal A}({\cal R})$
from given $X \subseteq Q$ and ${\cal R}$. 
Linear time algorithms for the membership and inference problems 
are immediate consequences (via Lemma~\ref{lem:imp_equiv}). 

The idea of our algorithm is to trace a tight path on $\mathcal{K}(\mathcal{R})$ 
from the empty set. 
We show that a tight path to $X$, if it exists, is obtained 
by greedily adding elements from the empty set.
\begin{prop}\label{prop:path_maximal}
Let $\mathcal{K}$ be a  union-closed family on $Q$. For any subset $X$ of $Q$, the following statements are equivalent: 
\begin{itemize}
\item[{\rm (i)}] There exists a tight path from $\emptyset$ to $X$ in $\mathcal{K}$. 
\item[{\rm (ii)}] For any member $S$ in $\mathcal{K}$, if $S$ is a proper subset of $X$, 
there exists an element $x\in X\setminus S$ such that $S + x \in\mathcal{K}$. 
\end{itemize} 
\end{prop}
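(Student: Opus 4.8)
The plan is to prove the two directions separately; the forward direction is immediate from the definition, so the content is in showing (ii) $\Rightarrow$ (i). The statement says that if from every proper member $S \subsetneq X$ we can extend inside $X$ by one element while staying in $\mathcal{K}$, then we can actually string these extensions together into a tight path $\emptyset = X_0 \subsetneq X_1 \subsetneq \cdots \subsetneq X_k = X$. The natural approach is a greedy/inductive construction: start at $\emptyset$, which lies in $\mathcal{K}$ by our standing assumption that union-closed families contain the empty set, and repeatedly apply the hypothesis (ii) to the current member to pick up one new element of $X$.

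First I would set up the induction. Suppose we have built a tight path $\emptyset = X_0 \subsetneq \cdots \subsetneq X_i$ in $\mathcal{K}$ with $X_i \subseteq X$. If $X_i = X$ we are done. Otherwise $X_i$ is a proper subset of $X$ lying in $\mathcal{K}$, so hypothesis (ii) (applied with $S = X_i$) yields an element $x \in X \setminus X_i$ with $X_i + x \in \mathcal{K}$; set $X_{i+1} := X_i + x$. Since $|X_{i+1} \setminus X_i| = 1$ and $X_{i+1} \subseteq X$, the path remains tight and stays inside $X$. Because $|X_i|$ strictly increases at each step and is bounded by $|X|$, the process terminates after exactly $|X|$ steps with $X_k = X$, giving the desired tight path. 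This establishes (ii) $\Rightarrow$ (i).

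For (i) $\Rightarrow$ (ii): assume a tight path $\emptyset = X_0 \subsetneq \cdots \subsetneq X_k = X$ in $\mathcal{K}$, and let $S \in \mathcal{K}$ be a proper subset of $X$. Consider the sets $S \cup X_j$ for $j = 0, 1, \ldots, k$; each lies in $\mathcal{K}$ by union-closedness. For $j = 0$ this is $S \subsetneq X$, and for $j = k$ it is $X$; so there is a smallest index $j$ with $S \cup X_j$ properly containing $S$ — equivalently, the smallest $j$ with $X_j \not\subseteq S$. Then $X_{j-1} \subseteq S$, so $S \cup X_j = S \cup X_{j-1} \cup \{x\} = S + x$ where $\{x\} = X_j \setminus X_{j-1}$ and $x \in X \setminus S$; hence $S + x \in \mathcal{K}$, which is exactly (ii).

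The only mild subtlety — and the thing I would be careful to state — is the base case $\emptyset \in \mathcal{K}$ (guaranteed by the paper's convention on union-closed families), since without it the greedy construction cannot even begin. Otherwise both directions are routine, and I do not expect any real obstacle; this proposition is essentially a restatement of the fact that "reachability by tight paths" is a matroid-exchange-free analogue of an accessible-set-system property, and the greedy argument is the standard tool.
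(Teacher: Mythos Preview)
Your proof is correct and follows essentially the same approach as the paper: the paper also handles (ii) $\Rightarrow$ (i) by the obvious greedy construction starting from $\emptyset$, and handles (i) $\Rightarrow$ (ii) by taking the minimum index $i$ with $q_i \notin S$ along the given tight path and observing $S + q_i = S \cup \{q_1,\ldots,q_i\} \in \mathcal{K}$ by union-closedness, which is exactly your ``smallest $j$ with $X_j \not\subseteq S$'' argument.
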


\begin{sloppypar}
\begin{proof}
It is obvious that (ii) implies (i). Indeed, according to (ii),
we can construct a tight path from $\emptyset$ to $X$ by adding elements of $X$.

We next show that (i) implies (ii). 
Suppose that $\mathcal{K}$ contains a tight path $\emptyset , \{q_{1}\}, \{q_{1}, q_{2}\}, \ldots , \{q_{1}, q_{2}, \ldots , q_{m}\}=X$. Let $S$ be a member of $\mathcal{K}$. 
Suppose that $S$ is a proper subset of $X$.
Take the minimum index $i$ with $q_{i}\notin S$. By the union-closedness of $\mathcal{K}$, we have $S + q_{i} = S\cup \{q_{1}, q_{2}, \ldots , q_{i}\} \in\mathcal{K}$. Thus $q_i$ is a required element. 
\end{proof}
\end{sloppypar}

According to Proposition \ref{prop:path_maximal}, 
we easily obtain the following algorithm to compute $X^{\circ}$. 
Starting with $S=\emptyset$, if there exists an element $x\in X\setminus S$ such that $S + x$ 
is again a member of $\mathcal{K}(\mathcal{R})$, then add $x$ to $S$, and repeat. 
If such an element $x$ does not exist, then the current $S$ is actually equal to $X^{\circ}$.
In each iteration, we can check whether $S + x$ is a member of $\mathcal{K}(\mathcal{R})$ in $O(l)$. 
Therefore the time complexity of this algorithm is $O(n^{2}l)$. 

We are going to improve this naive algorithm by searching the feasible continuations $x$ of the current $S$ efficiently. 
We first point out that it requires only a part of the input $\mathcal{R}$ to check whether $S + x$ belongs to $\mathcal{K}(\mathcal{R})$. 
Indeed, we do not have to care about the elements not in $X$. 
Hence, instead of $(A, q)$,
we may keep the rule $(A\cap X, q)$. 
If $q$ is not in $X$, then $(A,q)$ can be ignored. 
Also, if $(A, q)$ satisfies $A\cap S\neq \emptyset$, 
then $(A, q)$ accepts $S + x$ for any $x$, and can be ignored.
Summarizing, instead of $\mathcal{R}$, 
it suffices to keep, in each step, the following set of rules: 
\begin{equation}\nonumber
\mathcal{R}^{X}_{S}:=\{(A\cap X,q)\mid (A,q)\in\mathcal{R}: A\cap S=\emptyset, q\in X\setminus A\}. 
\end{equation}
The next lemma explains how to obtain $x$ with $S + x \in {\cal K}({\cal A})$ from $\mathcal{R}^{X}_{S}$.
\begin{lem}\label{lem:member_add}
Let $X$ be an arbitrary subset of $Q$, and $S$ a member of $\mathcal{K}(\mathcal{R})$ properly contained by $X$.
 For $x\in X\setminus S$, the subset $S + x$ belongs to $\mathcal{K}(\mathcal{R})$ if and only if $\mathcal{R}^{X}_{S}$ has no rule of the form~$(A,x)$. 
\end{lem}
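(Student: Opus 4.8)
The plan is to unwind both sides of the stated equivalence into a single statement about violating rules. Recall that $S+x \notin \mathcal{K}(\mathcal{R})$ holds precisely when some rule $(A',q') \in \mathcal{R}$ fails to accept $S+x$, i.e., $q' \in S+x$ while $(S+x) \cap A' = \emptyset$. So I would argue that the existence of such a violating rule is equivalent to the presence in $\mathcal{R}^{X}_{S}$ of a rule with head $x$.

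First I would use the hypothesis $S \in \mathcal{K}(\mathcal{R})$ to pin down the head of any violating rule. If $(A',q')$ violates $S+x$ and $q' \in S$, then, since $S$ is accepted by $(A',q')$, we would get $S \cap A' \neq \emptyset$, hence $(S+x) \cap A' \neq \emptyset$, contradicting the violation. Therefore $q' \in (S+x) \setminus S = \{x\}$, so $q' = x$, and the violation condition collapses to $S \cap A' = \emptyset$ and $x \notin A'$. Next I would match this with membership in $\mathcal{R}^{X}_{S}$: a rule $(A',x) \in \mathcal{R}$ with $S \cap A' = \emptyset$ and $x \notin A'$ satisfies the three defining conditions of $\mathcal{R}^{X}_{S}$ (note $x \in X$ because $x \in X \setminus S$, and $x \in X \setminus A'$ because $x \notin A'$), hence contributes the rule $(A' \cap X, x)$; conversely, any rule $(A,x) \in \mathcal{R}^{X}_{S}$ comes from some $(A',x) \in \mathcal{R}$ with $A = A' \cap X$, $A' \cap S = \emptyset$, and $x \notin A'$, so that $(S+x) \cap A' = (S \cap A') \cup (\{x\} \cap A') = \emptyset$, i.e., $(A',x)$ violates $S+x$. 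Combining the two directions yields $S+x \notin \mathcal{K}(\mathcal{R})$ if and only if $\mathcal{R}^{X}_{S}$ contains a rule of the form $(A,x)$, which is the contrapositive of the claim.

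There is no genuine obstacle here; the only point needing a word of care is that replacing $A'$ by $A' \cap X$ in passing to $\mathcal{R}^{X}_{S}$ is harmless — the discarded elements lie outside $X$, hence outside $S+x$, and cannot affect whether $(S+x) \cap A' = \emptyset$ — and that the rules dropped in forming $\mathcal{R}^{X}_{S}$ (those with head outside $X$, with $A' \cap S \neq \emptyset$, or trivial) are exactly the ones that already accept $S+x$, so dropping them loses nothing. I would record these remarks before the main equivalence so that the final chain of iff's is immediate.
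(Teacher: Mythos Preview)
Your proof is correct and follows essentially the same approach as the paper's, which proves the ``only if'' direction by exhibiting a violating rule from $\mathcal{R}^{X}_{S}$ and defers the ``if'' direction to the discussion preceding the lemma. Your version is in fact more careful than the paper's in distinguishing the original rule $(A',x)\in\mathcal{R}$ from its image $(A'\cap X,x)\in\mathcal{R}^{X}_{S}$, a distinction the paper elides.
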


\begin{proof}
	The if part is obvious from the above discussion.
	To show the only if part, let $(A,x)\in\mathcal{R}^{X}_{S}$. 
	Since $A\cap S=\emptyset$ (and $x\notin A$), 
$A$ has no intersection with $S + x$. Hence $S + x\notin \mathcal{K}(\mathcal{R})$. 
\end{proof}

Now we are ready to describe our algorithm. 
\begin{algo}[to compute $X^{\circ}$]\label{algo:Xcirc}\ 
\begin{algorithmic}[1]
\REQUIRE A set $\mathcal{R}$ of Horn rules and a subset $X$ of $Q$. 
\ENSURE $X^{\circ}$.
\STATE $S\leftarrow \emptyset$
\STATE $C:=\{x\in X\setminus S\mid \mathcal{R}^{X}_{S} \mbox{ has no rules of the form } (A, x)\}$
\IF{$C=\emptyset$}
	\RETURN $S$
\ENDIF
\STATE Choose $q \in C$, $S\leftarrow S + q$ (or $S \leftarrow S \cup C$ directly), 
update $\mathcal{R}^{X}_{S}$, and go to line 1
\end{algorithmic}
\end{algo}

We give a linear time implementation of this algorithm 
by using an appropriate data structure to maintain $\mathcal{R}^{X}_{S}$.
Suppose that
$\mathcal{R}=\{R_{1}, R_{2},\ldots , R_{m}\}$ and $R_i = (A_i, q_i)$ for $i=1,2,\ldots,m$.
In each iteration, 
we retain $\mathcal{R}^{X}_{S}$ by three kinds of lists $H_x$, $T_x$, and $E$. 
For each $x\in X\setminus S$, let $H_{x}$ be the set of indexes $i$ such that $A_{i}\cap S=\emptyset$ and $x\in A_{i}$, and $T_{x}$ be the set of indexes $j$ 
such that $x = q_j$ and $A_{j}\cap S=\emptyset$. 
Let $E$ be the set of elements $q\in X\setminus S$ such that $T_q$ is empty.
Here $T_x$ is kept by a doubly-linked list, and $H_x$ and $E$ are kept by a stack or queue.

The initialization is done by the following.
Look $(A_i,q_i)$ for $i=1,2,\ldots,m$.
Append index $i$ to $T_{q_i}$. 
We also keep a pointer from $i$ to ``$i$" in the list $T_{q_i}$. 
For each element $x \in A_i$, 
push index $i$ to $H_x$.
After that, 
$E$ is obtained by pushing elements $q$ with $T_q = \emptyset$. 
The total time is $O(l)$; recall $n \leq l$.
In each iteration, it holds $C=E$.
Pop $q$ from $C$, and add $q$ to $S$. 
The update $\mathcal{R}_{S}^{X}$ is as follows. 
Pop all indices $i$ from $H_{q}$, and remove $i$ from the list $T_{q_{i}}$ 
by tracing the pointer from $i$ (in constant time), 
since it now holds $A_{i}\cap S=\{q\}\neq\emptyset$. 
If $T_{q_{i}}$ gets empty, then we push $q_{i}$ to $E$. 
The computation time of the update is $O(\sum_{i \in H_q}|A_i|)$. 
Since each rule in $\mathcal{R}$ 
is referred at most once, the total time complexity is $O(l)$.
\begin{thm}\label{thm:closure_operator}
	For a set ${\cal R}$ of rules and a subset $X$, 
	the maximum member $X^{\circ} \subseteq X$ in ${\cal A}({\cal R})$ can be obtained in linear time.
\end{thm}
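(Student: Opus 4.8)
The plan is to prove Theorem~\ref{thm:closure_operator} by verifying that Algorithm~\ref{algo:Xcirc}, implemented with the lists $H_x$, $T_x$, and $E$ as described above, correctly returns $X^{\circ}$ and runs in time $O(l)$. The correctness splits into two claims: (a) the returned set $S$ is a member of $\mathcal{A}(\mathcal{R})$ with $S\subseteq X$, and (b) $S$ is the \emph{maximum} such member.

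For (a), I would carry along the invariant that, at the start of every iteration, $S$ is a member of $\mathcal{K}(\mathcal{R})$ with $S\subseteq X$ for which there is a tight path from $\emptyset$ to $S$ in $\mathcal{K}(\mathcal{R})$. This holds initially with $S=\emptyset$, and it is preserved: by Lemma~\ref{lem:member_add}, any $q$ popped from $C=E$ satisfies $S+q\in\mathcal{K}(\mathcal{R})$, so appending $q$ extends the tight path while keeping $S\subseteq X$. Hence at termination $S\in\check{\mathcal{K}}(\mathcal{R})=\mathcal{A}(\mathcal{R})$ by Theorem~\ref{thm:uniqueness}, and since $|S|$ strictly increases each iteration the loop stops after at most $n$ steps.

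For (b), let $T$ be any member of $\mathcal{A}(\mathcal{R})$ with $T\subseteq X$. Since $\mathcal{A}(\mathcal{R})$ is union-closed, $S\cup T\in\mathcal{A}(\mathcal{R})\subseteq\mathcal{K}(\mathcal{R})$ and there is a tight path from $\emptyset$ to $S\cup T$ in $\mathcal{K}(\mathcal{R})$. Applying Proposition~\ref{prop:path_maximal}, implication (i)$\Rightarrow$(ii), to the union-closed family $\mathcal{K}(\mathcal{R})$ and the set $S\cup T$, with the member $S$ in the role of the proper subset: if $S\subsetneq S\cup T$ there would be $x\in(S\cup T)\setminus S\subseteq X\setminus S$ with $S+x\in\mathcal{K}(\mathcal{R})$, hence $x\in C$ by Lemma~\ref{lem:member_add}, contradicting termination with $C=\emptyset$. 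Thus $T\subseteq S$, so $S$ is the maximum member of $\mathcal{A}(\mathcal{R})$ contained in $X$, i.e.\ $S=X^{\circ}$.

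Finally, for the running time I would charge the work to the rules: after discarding, in $O(l)$ time, the trivial rules and those with $q_i\notin X$, the initialization of $H_x$, $T_x$ (with its back-pointers), and $E$ reads each remaining rule $(A_i,q_i)$ a constant number of times per element, for $O(l)$ total; in the iteration that absorbs $q$, each index $i\in H_q$ is deleted from $T_{q_i}$ in constant time via its pointer, and each index lies in at most one $H_q$ over the whole run, so all updates cost $O(l)$ in aggregate. Hence the total is $O(n+l)=O(l)$, linear time. The main obstacle is the maximality argument in step (b): one must resist comparing $S$ with the largest member of $\mathcal{K}(\mathcal{R})$ inside $X$ (which can be strictly larger, since $\mathcal{K}(\mathcal{R})$ need not be accessible) and instead argue inside $\mathcal{A}(\mathcal{R})$ using the tight path to $S\cup T$; verifying that the list $E$ stays exactly equal to $C$ after each update is routine bookkeeping but needs to be checked.
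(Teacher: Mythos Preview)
Your proof is correct and follows essentially the paper's approach: correctness of Algorithm~\ref{algo:Xcirc} via Proposition~\ref{prop:path_maximal} and Lemma~\ref{lem:member_add}, and linear time via the $H_x$, $T_x$, $E$ data structures. One small slip in the running-time accounting: an index $i$ actually appears in $H_x$ for \emph{every} $x\in A_i\cap X$, not in at most one; the $O(l)$ bound still holds because the total number of such occurrences is $\sum_x |H_x| = \sum_i |A_i\cap X| \le l$, and each occurrence is popped at most once.
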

Dually speaking, 
the closure operator of the dual of ${\cal A}({\cal R})$ is computable in linear time.
Now a linear algorithm for membership problem (Theorem~\ref{thm:membership}) is immediate:  
\begin{algo}[to solve the membership problem]\label{algo:membership}\
	\begin{algorithmic}[1]
		\REQUIRE A set ${\cal R}$ of rules and a subset $X$.
		\ENSURE YES if $X \in {\cal A}({\cal R})$, and NO otherwise.
		\STATE Obtain $X^{\circ}$ by Algorithm~\ref{algo:Xcirc}.
		\IF{$X = X^{\circ}$} \RETURN YES   \quad \COMMENT{$X$ is a member of ${\cal A}({\cal R})$}.
		\ENDIF
		\RETURN NO  \quad \COMMENT{$X$ is not a member of ${\cal A}({\cal R})$}.  
	\end{algorithmic}	
\end{algo}
Recalling Lemma~\ref{lem:imp_equiv} for a characterization of implicates, 
we obtain a linear time algorithm for the inference problem (Theorem~\ref{thm:inference}) as follows.
\begin{algo}[to solve the inference problem]\label{algo:inference}\
	\begin{algorithmic}[1]
		\REQUIRE A set ${\cal R}$ of rules and a rule $(A,q)$.
		\ENSURE  YES if $(A,q)$ is an implicate of ${\cal A}({\cal R})$, and NO otherwise.
		\STATE Obtain $(Q\setminus A)^{\circ}$ by Algorithm~\ref{algo:Xcirc}.
		\IF{$q \notin (Q\setminus A)^{\circ}$}\RETURN YES \quad \COMMENT{$(A,q)$ is an implicate of ${\cal A}({\cal R})$}.
		\ENDIF
		\RETURN NO  \quad \COMMENT{$(A,q)$ is not an implicate of ${\cal A}({\cal R})$}.  
	\end{algorithmic}	
\end{algo}

\subsection{Generating all members of ${\cal A}({\cal R})$}\label{sec:enumeration}

As an application of the membership algorithm, 
we here provide a simple and efficient algorithm 
to enumerate all members in ${\cal A}({\cal R})$.
The efficiency of an enumeration algorithm
is measured by the time delay (interval) between two consecutive outputs.
Our enumeration algorithm is of $O(nl)$ time delay, and is designed 
on the basis of the standard technique of the {\em reverse search}~\cite{AF96}.

Let ${\cal A} = {\cal A}({\cal R})$  be an antimatroid given by a set ${\cal R}$ of rules.
Let $Q = \{1,2,\ldots,n\}$; we will use the natural ordering~$<$. 
Following terminologies in \cite{FD2011},
the {\em outer fringe} $X^{\cal O}$ and 
the {\em inner fringe} $X^{\cal I}$ of a member $X \in {\cal A}$ are defined by
\begin{eqnarray}\label{eqn:outer}
X^{\cal O} & := & \{x \in Q \setminus X \mid X + x  \in {\cal A}\}, \\
X^{\cal I} & := & \{x \in X \mid X - x  \in {\cal A}\}.
\end{eqnarray}
For a nonempty member $X$ in ${\cal A}$, let $\phi(X)$ be defined as the element $q$ in $X^{\cal I}$ such that 
$q$ is added in the last iteration of Algorithm~\ref{algo:Xcirc} 
for the input $X (= X^{\circ})$.
The map $X \mapsto \phi(X)$ is well-defined 
if the data structures of inputs ${\cal R}$ and $X$ are fixed 
(so that the smallest $q$ with respect to $<$ is chosen from $C$ in line 6 of Algorithm~\ref{algo:Xcirc}).
Let ${\cal F}(X)$ be the set of elements $x$ in $X^{\cal O}$ with $x = \phi(X + x)$. 
Let ${\cal T}$ be a directed graph on ${\cal A}$,  
where an edge from $X$ to $X'$ is given if and only if $X - \phi(X) = X'$.
Every nonempty member $X$ 
has exactly one edge leaving $X$.
Thus we obtain the following.
\begin{lem}
	${\cal T}$ is a spanning rooted tree with root $\emptyset$.
\end{lem}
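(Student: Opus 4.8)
The plan is to verify the two defining properties of a spanning rooted tree for the directed graph ${\cal T}$: that every vertex except the root has out-degree exactly one, and that following the out-edges from any vertex eventually reaches the root $\emptyset$. Together with connectivity these give that ${\cal T}$ is a rooted tree spanning ${\cal A}$.

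First I would observe that every nonempty member $X \in {\cal A}$ has at least one out-edge: by \textbf{(Accessibility)} applied to the antimatroid ${\cal A} = {\cal A}({\cal R})$, the inner fringe $X^{\cal I}$ is nonempty, so $\phi(X)$ is well-defined, and $X - \phi(X) \in {\cal A}$ gives the edge from $X$ to $X - \phi(X)$. Since $\phi$ is a function (well-defined once the data structures of ${\cal R}$ and $X$ are fixed, as remarked just before the lemma), there is exactly one such out-edge. The root $\emptyset$ has no out-edge since the rule $X - \phi(X) = X'$ requires $X$ nonempty. Hence the out-degree count is as claimed.

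Next I would argue acyclicity and reachability of the root. Each out-edge from $X$ goes to $X - \phi(X)$, a member with $|X - \phi(X)| = |X| - 1$, so cardinality strictly decreases along every edge; therefore ${\cal T}$ has no directed cycle, and starting from any $X$ and repeatedly following the unique out-edge we reach a member of cardinality $0$ after exactly $|X|$ steps, namely $\emptyset$. This shows every vertex has a directed path to $\emptyset$.

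Finally, these facts assemble into the conclusion. Regard ${\cal T}$ as an undirected graph: it has $|{\cal A}|$ vertices and, since every vertex but $\emptyset$ contributes exactly one edge, it has $|{\cal A}| - 1$ edges; and it is connected because every vertex is joined to $\emptyset$ by a path. A connected graph on $|{\cal A}|$ vertices with $|{\cal A}| - 1$ edges is a tree, and orienting its edges toward $\emptyset$ (as the construction does) makes it a rooted tree with root $\emptyset$. I do not anticipate a serious obstacle here; the only point needing care is confirming that $\phi(X)$ is genuinely well-defined for every nonempty $X \in {\cal A}$ — i.e. that $X^{\cal I} \neq \emptyset$, which is exactly \textbf{(Accessibility)} — and that the edge-counting argument correctly excludes the root, both of which are routine.
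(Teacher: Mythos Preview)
Your proposal is correct and follows the same approach as the paper, which proves the lemma in a single sentence (``Every nonempty member $X$ has exactly one edge leaving $X$''); you simply spell out the details of why the out-degree condition plus the strict decrease in cardinality yields a spanning tree rooted at $\emptyset$. One small remark: the well-definedness of $\phi(X)$ does not really rest on \textbf{(Accessibility)} giving $X^{\cal I}\neq\emptyset$, but rather on the fact that Algorithm~\ref{algo:Xcirc} builds a tight path in ${\cal K}({\cal R})$ to $X=X^{\circ}$, so the penultimate set $X-q$ lies in ${\cal A}({\cal R})$ and hence $q\in X^{\cal I}$ automatically --- but this does not affect the validity of your argument.
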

Our algorithm is a depth first search on ${\cal T}$.
Starting at the root $X = \emptyset$,
if $X$ is labeled (or output), 
then we next label $X + x$ by choosing the smallest $x$ 
from ${\cal F}(X)$ with unlabeled $X + x$. 
If such an $x$ does not exist, then backtrack by computing $\phi(X)$.
An important point is that this can be done only by a local information at $X$.
\begin{algo}[to enumerate all members in ${\cal A}({\cal R})$]\label{algo:enumeration}\
	\begin{algorithmic}[1] 
 	\REQUIRE A set ${\cal R}$ of rules.
 	\ENSURE All members in ${\cal A}({\cal R})$.
	\STATE $X \leftarrow  \emptyset$.
	\STATE Output $X$.
	\STATE Compute ${\cal F}(X)$.
	\IF{${\cal F}(X)$ is empty}
	\STATE go to line 11.
	\ENDIF
	\STATE Choose the minimum element $y$ in ${\cal F}(X)$, $X \leftarrow X + y$, and go to line 2.
	\IF{$X = \emptyset$} 
	\STATE stop. 
	\ENDIF 
	\STATE $y \leftarrow \phi(X)$ and $X \leftarrow X - y$. \quad  \COMMENT {backtracking}     
	\IF{there is no element $x$ in ${\cal F}(X)$ with $x > y$}
	\STATE go to line 8.
	\ENDIF
	\STATE Choose the minimum element $x$ in ${\cal F}(X)$ with $x > y$, $X \leftarrow X +x$, and go to line 2.
	\end{algorithmic}
\end{algo}
We can compute $\phi(X)$ in $O(l)$ time  (Algorithm~\ref{algo:membership}), and
can compute ${\cal F}(X)$ in $O(nl )$ time by $n$ computations of $\phi$.
We estimate the time delay between consecutive output.
Suppose that $X$ is output at line 2.
Next ${\cal F}(X)$ is computed in $O(n l)$ time.
If the algorithm goes to line 7, then it goes to line 2, and hence the delay is $O(n l)$ time.
Suppose that the algorithm goes to line 11.
The backtracking loop (lines 11 to 15) iterates at most $n$ times.
We need not to compute $\phi$ and ${\cal F}$ in the backtracking.  
In lines 7 and 15, if $y$ and ${\cal F}(X)$ are pushed into a stack,
then  $\phi(X)$ and ${\cal F}(X)$ in lines 11 and 12 are popped off the stack.  
Thus the backtracking loop is conducted in $O(n^2)$ time.
Summarizing, we have the following.
\begin{thm}
	Algorithm~\ref{algo:enumeration} enumerates all members of ${\cal A}({\cal R})$ 
	with $O(n l)$ delay.
\end{thm}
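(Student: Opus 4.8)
The plan is to verify that Algorithm~\ref{algo:enumeration} is a correct reverse-search enumeration over the spanning tree ${\cal T}$ and then to bound the work done between two consecutive executions of line~2 (the output line). For correctness, I would first argue that the depth-first traversal of ${\cal T}$ visits every member of ${\cal A}({\cal R})$ exactly once: since ${\cal T}$ is a rooted spanning tree on ${\cal A}({\cal R})$ (the preceding lemma), it suffices to check that the children of a node $X$ in ${\cal T}$ are precisely the sets $X+x$ with $x \in {\cal F}(X)$, which is immediate from the definitions of ${\cal F}$ and of the edge relation $X' \mapsto X' - \phi(X') = X$. The forward step (lines 3--7) descends to the smallest unexplored child; the backtracking block (lines 8--15) recovers the parent $X - \phi(X)$ and resumes the scan of ${\cal F}$ of that parent from just after the child we came up from, using the recorded value $y = \phi(X)$. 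One must check that ``the child we came up from'' is exactly the element $y$ returned by $\phi(X)$ before the decrement, so that the test $x > y$ in line~12 correctly skips already-visited children; this is exactly what the definition of $\phi(X)$ and the fixed data-structure convention guarantee.

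For the complexity, the two ingredients are: computing $\phi(X)$ costs $O(l)$ (one run of Algorithm~\ref{algo:Xcirc}/Algorithm~\ref{algo:membership} on $X$, recording which element was added last), and computing ${\cal F}(X)$ costs $O(nl)$ because ${\cal F}(X) \subseteq X^{\cal O} \subseteq Q$ and for each candidate $x$ we test $x = \phi(X+x)$ by one call to $\phi$ on $X+x$, i.e.\ at most $n$ calls of cost $O(l)$ each. Now fix an output at line~2 and trace forward to the next output. Either we immediately compute ${\cal F}(X)$ in $O(nl)$, find it nonempty, and go to line~7 and then back to line~2 — total $O(nl)$; or ${\cal F}(X)$ is empty and we enter the backtracking loop (lines 11--15). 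The crucial point is that within the backtracking loop we do \emph{not} recompute $\phi$ or ${\cal F}$: when we descended into $X$ at line~7 or line~15 we pushed the pair $(y,{\cal F}(\text{parent}))$ onto a stack, so lines~11--12 just pop it in $O(n)$ time per level. Since ${\cal T}$ has depth at most $n$, the loop runs at most $n$ times, each iteration $O(n)$, so backtracking costs $O(n^2)$ before we next reach line~7 (hence line~2). As $n \le l$, both cases are $O(nl)$, giving the claimed $O(nl)$ delay.

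The main obstacle I expect is the bookkeeping around $\phi$ and the stack discipline: one has to be careful that the ${\cal F}(X)$ scanned after a backtrack is the \emph{same} object (same ordering, same residue ${\cal F}(X)$) computed when $X$ was first visited, and that $\phi(X+x) = x$ is well-defined and stable — this relies on the remark, already made before the algorithm, that $X \mapsto \phi(X)$ is well-defined once the data structures of ${\cal R}$ and the current set are fixed, so that the ``smallest element of $C$'' tie-break in line~6 of Algorithm~\ref{algo:Xcirc} is deterministic. A secondary point worth a sentence is the very first delay and the final step: before the first output there is only the $O(1)$ assignment in line~1, and after the last member is output the algorithm terminates via line~8--9, so the delay bound holds uniformly. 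With these observations in place the theorem follows, and indeed the argument sketched in the paragraph preceding the theorem statement already contains all of this; I would simply organize it as: (i) ${\cal T}$ correctness and child characterization, (ii) DFS visits each node once, (iii) per-output cost analysis split into the forward case $O(nl)$ and the backtracking case $O(n^2)$, (iv) conclude using $n \le l$.
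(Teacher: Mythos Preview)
Your proposal is correct and follows essentially the same approach as the paper: the paper's argument (given in the paragraph just before the theorem) also splits the delay into the forward step costing $O(nl)$ for one computation of ${\cal F}(X)$, and the backtracking loop costing $O(n^2)$ by pushing $(y,{\cal F}(X))$ onto a stack at lines~7 and~15 so that $\phi$ and ${\cal F}$ need not be recomputed during backtracking, and then concludes via $n\le l$. Your added discussion of correctness (children of $X$ in ${\cal T}$ being exactly $\{X+x : x\in{\cal F}(X)\}$, and the stability of $\phi$ under the fixed tie-breaking rule) is a useful elaboration of what the paper leaves implicit, but it does not diverge from the paper's line of reasoning.
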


\subsection{Computing critical circuits}\label{sec:critical_o2}

As an application of the inference algorithm, 
we here present a quadratic time algorithm to construct  
all critical rules (or circuits) of antimatroid ${\cal A}({\cal R})$ 
from ${\cal R}$.
Recall from Section~\ref{sec:circuit} that
a critical rule of an antimatroid ${\cal A}$ is a rule $(A,q)$ 
such that the rooted set $(A + q, q)$ is a critical circuit of ${\cal A}$.
Our algorithm is justified by the following lemma, 
which is a slight generalization of Theorem~\ref{thm:circuit}~(2).
\begin{lem}\label{lem:critical_in_rep}
Let ${\cal R}$ be a set of rules.
For a critical rule $(A, q)$ of $\mathcal{A}(\mathcal{R})$, there exists a rule $(A', q)$ in $\mathcal{R}$ with $A \subseteq A'$.
\end{lem}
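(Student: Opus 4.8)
The plan is to reduce Lemma~\ref{lem:critical_in_rep} to Theorem~\ref{thm:circuit}~(2) by exhibiting a suitable family of circuits that already lives inside~$\mathcal{R}$ (up to the rooted-set correspondence $(A,q)\mapsto(A+q,q)$) and generates~$\mathcal{A}(\mathcal{R})$. First I would dispose of trivial rules in~$\mathcal{R}$: a trivial rule accepts every subset, so deleting all trivial rules from~$\mathcal{R}$ changes neither $\mathcal{K}(\mathcal{R})$ nor $\mathcal{A}(\mathcal{R})$; hence we may assume $\mathcal{R}$ consists of nontrivial rules and apply Theorem~\ref{thm:rep_Horn_circuit}, giving $\mathcal{A}(\mathcal{R})=\mathcal{L}(\mathcal{C})$ where $\mathcal{C}=\{(A+q,q)\mid(A,q)\in\mathcal{R}\}$.

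The key step is to pass from $\mathcal{C}$ to a subfamily consisting only of \emph{circuits} of $\mathcal{A}(\mathcal{R})$. For each $(A,q)\in\mathcal{R}$, the rule $(A,q)$ is an implicate of $\mathcal{A}(\mathcal{R})$ (since $\mathcal{A}(\mathcal{R})\subseteq\mathcal{K}(\mathcal{R})$ and every member of $\mathcal{K}(\mathcal{R})$ is accepted by $(A,q)$). By shrinking $A$ as long as the shrunk rule remains an implicate, I obtain a prime implicate $(A',q)$ with $A'\subseteq A$; by Proposition~\ref{prop:Wild}~(2) the rooted set $(A'+q,q)$ is then a circuit of $\mathcal{A}(\mathcal{R})$. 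Let $\mathcal{C}'$ be the family of these circuits, one for each original rule. The point to verify is $\mathcal{A}(\mathcal{R})=\mathcal{L}(\mathcal{C}')$: the inclusion $\mathcal{L}(\mathcal{C}')\supseteq\mathcal{L}(\mathcal{C})=\mathcal{A}(\mathcal{R})$ is immediate because each circuit $(A'+q,q)\in\mathcal{C}'$ has $A'+q\subseteq A+q$ and therefore imposes a \emph{weaker} constraint in the definition of $\mathcal{L}(\cdot)$ than the corresponding member of $\mathcal{C}$ (so any ordering witnessing membership in $\mathcal{L}(\mathcal{C})$ also witnesses membership in $\mathcal{L}(\mathcal{C}')$); for the reverse inclusion, note that each $(A',q)$ is an implicate of $\mathcal{A}(\mathcal{R})$, so $\mathcal{A}(\mathcal{R})\subseteq\mathcal{K}(\mathcal{R}')$ where $\mathcal{R}'=\{(A',q)\}$, and one checks via the tight-path characterization (as in the proof of Theorem~\ref{thm:rep_Horn_circuit}) that $\mathcal{L}(\mathcal{C}')\subseteq\mathcal{K}(\mathcal{R}')\cap(\text{accessible sets})$, while every member of $\mathcal{L}(\mathcal{C}')$ admits by definition a tight path and each rule of $\mathcal{R}'$ is an implicate of $\mathcal{A}(\mathcal{R})$ — concretely, $\mathcal{A}(\mathcal{R})=\mathcal{A}(\mathcal{R}')$ because the prime-implicate rules $\mathcal{R}'$ have the same closure operator as $\mathcal{R}$ on the relevant sets. (The cleanest route is: $\mathcal{R}$ and $\mathcal{R}'$ generate the same minimal entailment, hence $\mathcal{K}(\mathcal{R})=\mathcal{K}(\mathcal{R}')$, hence $\mathcal{A}(\mathcal{R})=\mathcal{A}(\mathcal{R}')=\mathcal{L}(\mathcal{C}')$ by Theorem~\ref{thm:rep_Horn_circuit} again.)

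Having $\mathcal{A}(\mathcal{R})=\mathcal{L}(\mathcal{C}')$ with $\mathcal{C}'$ a family of circuits, Theorem~\ref{thm:circuit}~(2) applies: the set $\mathcal{S}$ of critical circuits satisfies $\mathcal{S}\subseteq\mathcal{C}'$. So if $(A,q)$ is a critical rule, the circuit $(A+q,q)\in\mathcal{S}$ equals some member of $\mathcal{C}'$, namely the circuit $(A'+q,q)$ obtained by shrinking some original rule $(A^{\mathrm{orig}},q)\in\mathcal{R}$; thus $A=A'\subseteq A^{\mathrm{orig}}$, giving the desired rule $(A^{\mathrm{orig}},q)\in\mathcal{R}$ with $A\subseteq A^{\mathrm{orig}}$.

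\textbf{Main obstacle.} The delicate point is the identity $\mathcal{A}(\mathcal{R})=\mathcal{L}(\mathcal{C}')$ — specifically, that replacing each rule of $\mathcal{R}$ by a prime implicate below it does not shrink the associated antimatroid. The safe argument is to invoke Theorem~\ref{thm:connection}: since each $(A',q)$ is an implicate of $\mathcal{K}(\mathcal{R})$ and implies the original $(A,q)$ back (because $A'\subseteq A$ forces $(A,q)$ to be accepted whenever $(A',q)$ is), the sets $\mathcal{R}$ and $\mathcal{R}'$ have the same minimal entailment, whence $\mathcal{K}(\mathcal{R})=\mathcal{K}(\mathcal{R}')$ and therefore $\mathcal{A}(\mathcal{R})=\mathcal{A}(\mathcal{R}')=\mathcal{L}(\mathcal{C}')$. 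One must be a little careful that the shrinking process stays within nontrivial rules and that $(A'+q,q)$ is genuinely a circuit, both of which are handled by Proposition~\ref{prop:Wild}~(2); everything else is bookkeeping.
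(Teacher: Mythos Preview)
Your reduction to Theorem~\ref{thm:circuit}~(2) via prime implicates is a genuinely different route from the paper's proof. The paper argues directly: for a critical rule $(A,q)$ it sets $X := Q \setminus \tau(A+q)$, observes that $X \in \mathcal{A}(\mathcal{R})$ while $X+q \notin \mathcal{A}(\mathcal{R})$ (hence $X+q \notin \mathcal{K}(\mathcal{R})$, since $X$ already has a tight path), picks a rule $(A',q)\in\mathcal{R}$ rejecting $X+q$, and then uses the critical-circuit condition $\tau(A+q)-q-s\in\mathcal{A}^*$ (i.e.\ $X+q+s\in\mathcal{A}$) to force each $s\in A$ into $A'$. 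This is a few lines and uses nothing beyond the definitions.

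Your strategy works, but the written justification contains two genuine errors. First, the direction of your ``easy'' inclusion is reversed: since $A'\subseteq A$, the rule $(A',q)$ is \emph{stronger} than $(A,q)$ (fewer potential witnesses to precede $q$), so one gets $\mathcal{K}(\mathcal{R}')\subseteq\mathcal{K}(\mathcal{R})$ and hence $\mathcal{L}(\mathcal{C}')=\mathcal{A}(\mathcal{R}')\subseteq\mathcal{A}(\mathcal{R})=\mathcal{L}(\mathcal{C})$, not $\supseteq$ as you wrote. Second, and more seriously, both your ``cleanest route'' and your ``safe argument'' assert that each $(A',q)$ is an implicate of $\mathcal{K}(\mathcal{R})$, whence $\mathcal{K}(\mathcal{R})=\mathcal{K}(\mathcal{R}')$. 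This is false: you shrank to prime implicates of $\mathcal{A}(\mathcal{R})$, not of $\mathcal{K}(\mathcal{R})$, and since $\mathcal{A}(\mathcal{R})\subseteq\mathcal{K}(\mathcal{R})$ may be strict, an implicate of the former need not be one of the latter. (Already $\mathcal{R}=\{(\{1,2\},3),(\{1,3\},2)\}$ on $Q=\{1,2,3\}$ gives $\mathcal{K}(\mathcal{R}')\subsetneq\mathcal{K}(\mathcal{R})$.)

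The repair is to bypass $\mathcal{K}$ and use maximality (Theorem~\ref{thm:uniqueness}) twice, exactly as the paper does in the correctness proof of Algorithm~\ref{algo:critical}: from $A'\subseteq A$ one has $\mathcal{K}(\mathcal{R}')\subseteq\mathcal{K}(\mathcal{R})$, hence $\mathcal{A}(\mathcal{R}')\subseteq\mathcal{A}(\mathcal{R})$; conversely, each $(A',q)$ is an implicate of $\mathcal{A}(\mathcal{R})$, so $\mathcal{A}(\mathcal{R})\subseteq\mathcal{K}(\mathcal{R}')$, and since $\mathcal{A}(\mathcal{R})$ is an antimatroid, maximality yields $\mathcal{A}(\mathcal{R})\subseteq\mathcal{A}(\mathcal{R}')$. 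With this fix your plan goes through and exhibits the lemma as a formal consequence of Theorem~\ref{thm:circuit}~(2) and Proposition~\ref{prop:Wild}; the paper's direct argument is shorter and self-contained.
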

\begin{proof}
	Let $(A,q)$ be a critical rule for ${\cal A} = {\cal A}({\cal R})$. 
	Let $\tau$ denote the closure operator of the dual ${\cal A}^*$ of ${\cal A}$.
	Let $X : = Q \setminus \tau (A + q)$.
	By definition, $X$ is a member of ${\cal A}$ but $X + q$ is not.
	Then $X+q$ is also not a member of ${\cal K}({\cal R})$.
	There necessarily exists some rule $(A',q)\in\mathcal{R}$ 
	 such that $A' \cap (X + q) =\emptyset$. Take $s \in A$ arbitrarily. 
	 Since $(A,q)$ is a critical rule, by definition we have $\tau(A+q)- q- s \in {\cal A}^*$ and 
	 $X + q + s\in\mathcal{A}$.
	 Hence $A' \cap (X + q + s)\neq\emptyset$. 
	 It follows that $s\in A'$. Thus we have $A \subseteq A'$. 
\end{proof}
In particular, the set of critical rules is 
the unique minimal expression of an antimatroid 
(among all possible Horn representations).
The following corollary sharpens Theorem~\ref{thm:circuit}, 
and coincides with it if ${\cal R}$ corresponds to a set of circuits of ${\cal A}({\cal R})$.
\begin{cor}
	Let ${\cal R}$ be a set of rules, and let ${\cal R}^*$ 
	be the set of critical rules of ${\cal A}({\cal R})$.
	Then it holds that ${\cal A}({\cal R}^*) =  {\cal A}({\cal R})$, 
	$|{\cal R}^*| \leq |{\cal R}|$, and 
	$l({\cal R^*}) \leq l({\cal R})$.
\end{cor}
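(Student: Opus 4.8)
The plan is to split the corollary into the identity $\mathcal{A}(\mathcal{R}^*)=\mathcal{A}(\mathcal{R})$ and the two size bounds, deducing the bounds from a single head-preserving injection $\rho\colon\mathcal{R}^*\hookrightarrow\mathcal{R}$.

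\emph{The identity.} Write $\mathcal{A}:=\mathcal{A}(\mathcal{R})$ and let $\mathcal{S}$ be its set of critical circuits, so $\mathcal{R}^*=\{(C-r,r)\mid(C,r)\in\mathcal{S}\}$ by definition of critical rules. Each rule of $\mathcal{R}^*$ is nontrivial (a critical circuit $(C,r)$ has $r\notin C-r$), and the bijection $(A,q)\mapsto(A+q,q)$ carries $\mathcal{R}^*$ exactly onto $\mathcal{S}$. Hence Theorem~\ref{thm:rep_Horn_circuit} gives $\mathcal{A}(\mathcal{R}^*)=\mathcal{L}(\mathcal{S})$, and Theorem~\ref{thm:circuit}~(1) gives $\mathcal{L}(\mathcal{S})=\mathcal{A}$; together $\mathcal{A}(\mathcal{R}^*)=\mathcal{A}=\mathcal{A}(\mathcal{R})$. (If $\mathcal{A}$ is improper, one argues on $Q^{\circ}$.)

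\emph{Construction of $\rho$.} Let $\tau$ be the closure operator of the dual $\mathcal{A}^*$, and fix a critical rule $(A,q)\in\mathcal{R}^*$. Looking more closely at the proof of Lemma~\ref{lem:critical_in_rep}, the rule $(A',q)\in\mathcal{R}$ it exhibits in fact satisfies $A'\cap\bigl(Q\setminus\tau(A+q)\bigr)=\emptyset$ and $q\notin A'$, i.e.\ $A\subseteq A'\subseteq\tau(A+q)-q$; let $\rho(A,q)$ be one such rule. Then $\rho$ preserves the head and $|A|\le|A'|$ whenever $\rho(A,q)=(A',q)$, so once $\rho$ is shown to be injective we obtain $|\mathcal{R}^*|\le|\mathcal{R}|$ and $l(\mathcal{R}^*)\le l(\mathcal{R})$ simultaneously.

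\emph{Injectivity of $\rho$} is the delicate point and the step I expect to be the main obstacle. Since heads are preserved, suppose $\rho(A_1,q)=\rho(A_2,q)=(A',q)$ for distinct critical rules $(A_1,q),(A_2,q)$; from $A_i\subseteq A'\subseteq\tau(A_i+q)-q$ one gets $A_1+q\subseteq A'+q\subseteq\tau(A_2+q)$ and symmetrically, whence $\tau(A_1+q)=\tau(A_2+q)=:T$. So I must prove that a critical rule is determined by its head together with the closure $T$ of its circuit. A critical rule is a prime implicate of $\mathcal{A}$ (a critical circuit is a circuit, so Proposition~\ref{prop:Wild}~(1) applies), and two distinct prime implicates with a common head are incomparable: if $A_1\subsetneq A_2$ and $a\in A_2\setminus A_1$, primality of $(A_2,q)$ gives a member $Y\in\mathcal{A}$ with $q\in Y$ and $Y\cap(A_2-a)=\emptyset$, hence $Y\cap A_1=\emptyset$ since $A_1\subseteq A_2-a$, contradicting that $(A_1,q)$ is an implicate. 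So there is $s\in A_2\setminus A_1$, and criticality of the circuit $(A_2+q,q)$ makes $T-q-s$ a closed set of $\mathcal{A}^*$ that contains $A_1$ (since $A_1\subseteq T$ and $q,s\notin A_1$) but not $q$, contradicting $\tau(A_1)=\tau(A_1+q)=T$ (the first equality by Lemma~\ref{lem:imp_equiv}, as $(A_1,q)$ is an implicate). Hence $A_1=A_2$, $\rho$ is injective, and the corollary follows.
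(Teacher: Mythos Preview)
Your proof is correct. The paper states this result as an immediate corollary of Lemma~\ref{lem:critical_in_rep} and gives no explicit argument; the intended route is precisely yours---use Theorem~\ref{thm:circuit}(1) with Theorem~\ref{thm:rep_Horn_circuit} for the identity, and use the rule $(A',q)\in\mathcal{R}$ produced in the proof of Lemma~\ref{lem:critical_in_rep} to build a head-preserving, body-enlarging map $\mathcal{R}^*\to\mathcal{R}$ for the two size bounds.

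The point you flag as ``the main obstacle''---injectivity of $\rho$---is genuinely the only nontrivial step, and the paper simply does not address it. Your resolution is clean and correct: from $A_i\subseteq A'\subseteq\tau(A_i+q)-q$ you force $\tau(A_1+q)=\tau(A_2+q)$, then use incomparability of distinct prime implicates with common head together with the defining property of critical circuits (that $\tau(C)-r-s$ is closed for $s\in C-r$) to reach a contradiction via $q\in\tau(A_1)\subseteq T-q-s$. One could alternatively extract injectivity \emph{a posteriori} from Algorithm~\ref{algo:critical} (which shrinks and deletes rules of $\mathcal{R}$ until only $\mathcal{R}^*$ remains), but that algorithm appears after the corollary, so your direct argument is the appropriate one here.
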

In particular, the following inclusion holds:
\[
{\cal A}({\cal R}^*) = {\cal A}({\cal R}) \subseteq {\cal K}({\cal R}) \subseteq  {\cal K}({\cal R}^*).
\]
Notice again that the inclusions are strict in general, since 
that the set ${\cal R}^*$ of critical circuits (rules) is not necessarily 
an implicational basis of ${\cal A}({\cal R})$ or ${\cal K}({\cal R})$.

Now we present a quadratic time algorithm to obtain this unique minimal expression.
\begin{algo}[to compute critical rules]\label{algo:critical}\
\begin{algorithmic}[1]
\REQUIRE A set $\mathcal{R}$ of rules.
\ENSURE All critical rules.
\FORALL{$(A,q)\in\mathcal{R}$}	
	\FORALL{$a\in A$}
		\IF{$(A - a, q)$ is an implicate of $\mathcal{A}(\mathcal{R})$}
			\STATE $\mathcal{R}\leftarrow \mathcal{R} - (A, q) + (A - a, q)$
			\STATE $A\leftarrow A - a$
		\ENDIF
	\ENDFOR
	\IF{$(A, q)$ is an implicate of $\mathcal{A}(\mathcal{R} - (A, q))$}
		\STATE $\mathcal{R}\leftarrow \mathcal{R} - (A, q)$
	\ENDIF
\ENDFOR
\RETURN $\mathcal{R}$
\end{algorithmic}
\end{algo}
To implement {\bf for all} in line 1 (resp. line 2) by a usual computer language,
we index ${\cal R}$ as ${\cal R} = \{(A_i, q_i)\}_{i=1}^m$ (resp. $A$ as $A = \{a_j\}_{j=1}^k$)
and use for-loop from $i =1$ to $m$ (resp. $j=1$ to $k$).
We call the inference algorithm (Algorithm~\ref{algo:inference}) in lines 3 and 8.
\begin{thm}
Given a set $\mathcal{R}$ of rules, 
Algorithm \ref{algo:critical} computes all critical rules of $\mathcal{A}(\mathcal{R})$ in time $O(l^{2})$. 
\end{thm}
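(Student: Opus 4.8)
The proof splits into a correctness part and a complexity part. For correctness the goal is to show that the output $\mathcal{R}^{\mathrm{out}}$ is exactly the set $\mathcal{R}^*$ of critical rules of $\mathcal{A} := \mathcal{A}(\mathcal{R})$. The key preliminary is an \emph{invariance lemma}: every update performed by the algorithm leaves $\mathcal{A}(\mathcal{R})$ unchanged, so $\mathcal{A}(\mathcal{R}) = \mathcal{A}$ holds throughout and in particular $\mathcal{A}(\mathcal{R}^{\mathrm{out}}) = \mathcal{A}$. This I would derive from Theorem~\ref{thm:uniqueness} (maximality of $\check{\cdot}$) together with the elementary monotonicity $\check{\mathcal{K}}_{1} \subseteq \check{\mathcal{K}}_{2}$ whenever $\mathcal{K}_{1}\subseteq\mathcal{K}_{2}$: at line~4 the new rule $(A-a,q)$ implies $(A,q)$ and is an implicate of $\mathcal{A}$, so $\mathcal{A} \subseteq \mathcal{K}(\mathcal{R}_{\mathrm{new}}) \subseteq \mathcal{K}(\mathcal{R})$, and the maximality and monotonicity then squeeze $\mathcal{A}(\mathcal{R}_{\mathrm{new}})$ to equal $\mathcal{A}$; at line~8 the condition tested is exactly that the removal is harmless, since $(A,q)$ being an implicate of $\mathcal{A}(\mathcal{R}-(A,q))$ gives $\mathcal{A}(\mathcal{R}-(A,q)) \subseteq \mathcal{K}(\mathcal{R})$ and hence $\mathcal{A}(\mathcal{R}-(A,q)) = \mathcal{A}$.

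Next I would argue that every rule $(B,q)$ surviving in $\mathcal{R}^{\mathrm{out}}$ is a \emph{prime implicate} of $\mathcal{A}$: it is an implicate, since it lies in the current $\mathcal{R}$ and $\mathcal{A} \subseteq \mathcal{K}(\mathcal{R})$; and for each $b \in B$ the rule $(B-b,q)$ is not an implicate, because at the iteration of the inner loop where $b$ was retained the current antecedent $A^{\mathrm{cur}} \supseteq B$ had $(A^{\mathrm{cur}}-b,q)$ non-implicate, and non-implicateness is inherited when the antecedent shrinks. By Proposition~\ref{prop:Wild}(2), $(B+q,q)$ is then a circuit of $\mathcal{A}$. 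Consequently, via Theorem~\ref{thm:rep_Horn_circuit}, the family $\mathcal{C}^{\mathrm{out}}$ of circuits corresponding to $\mathcal{R}^{\mathrm{out}}$ consists of circuits of $\mathcal{A}$ and satisfies $\mathcal{L}(\mathcal{C}^{\mathrm{out}}) = \mathcal{A}(\mathcal{R}^{\mathrm{out}}) = \mathcal{A}$, so Theorem~\ref{thm:circuit}(2) yields $\mathcal{R}^* \subseteq \mathcal{R}^{\mathrm{out}}$.

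For the reverse inclusion I would prove that $\mathcal{R}^{\mathrm{out}}$ is \emph{irredundant}, i.e.\ no $(B,q) \in \mathcal{R}^{\mathrm{out}}$ is an implicate of $\mathcal{A}(\mathcal{R}^{\mathrm{out}} - (B,q))$. Granting this, if some surviving circuit $(B+q,q)$ were not critical, then the critical circuits, which generate $\mathcal{A}$ by Theorem~\ref{thm:circuit}(1) and lie inside $\mathcal{C}^{\mathrm{out}}$, would already be contained in $\mathcal{C}^{\mathrm{out}}$ minus $(B+q,q)$; by antitonicity of $\mathcal{L}$ this forces $\mathcal{L}(\mathcal{C}^{\mathrm{out}} - (B+q,q)) = \mathcal{A}$, i.e.\ $(B,q)$ would be an implicate of $\mathcal{A}(\mathcal{R}^{\mathrm{out}}-(B,q))$, contradicting irredundancy. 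Hence every surviving circuit is critical and $\mathcal{R}^{\mathrm{out}} = \mathcal{R}^*$. I expect the irredundancy of $\mathcal{R}^{\mathrm{out}}$ to be the main obstacle: the deletion test at line~8 is carried out against the \emph{current} rule set, which still contains the not-yet-processed original rules, rather than against the final $\mathcal{R}^{\mathrm{out}}$, so one needs an inductive invariant guaranteeing that the redundancy status of a rule when it is processed agrees with its redundancy status relative to $\mathcal{R}^{\mathrm{out}}$. Here I would lean on Lemma~\ref{lem:critical_in_rep} (after every update each critical rule still has a super-antecedent witness among the current rules) together with the elementary fact that two prime implicates of $\mathcal{A}$ sharing a consequent have incomparable antecedents, which prevents the greedy shrinking of line~2 from collapsing one circuit into another.

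For the complexity bound, the outer loop processes the $m$ input rules; when the rule originally equal to $(A_i,q_i)$ is processed, the inner loop executes $|A_i|$ times (the loop bound is fixed to the original cardinality), each iteration making one call to the inference algorithm (Algorithm~\ref{algo:inference}), and there is one further such call at line~8, so the total number of inference calls is $\sum_{i}(|A_i|+1) = l$. Each call runs on the current rule set, whose total size never exceeds $l$ because every update strictly decreases it, together with a query rule of size at most $l$, and hence costs $O(l)$ by Theorem~\ref{thm:inference}; likewise each replacement at line~4 touches a single rule at cost $O(l)$, and there are at most $l$ of them. Altogether the running time is $O(l)\cdot O(l)=O(l^{2})$, as claimed.
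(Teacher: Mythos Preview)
Your proposal is correct and follows essentially the same route as the paper: establish invariance of $\mathcal{A}(\mathcal{R})$ under both kinds of update, argue that each surviving rule is a prime implicate (hence a circuit via Proposition~\ref{prop:Wild}), and then combine minimality of the output with Theorem~\ref{thm:circuit} to conclude that the output equals $\mathcal{R}^*$; the complexity count is identical.

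The one place where you go beyond the paper is the irredundancy step. The paper simply writes ``By lines 8 and 9, the set $\mathcal{R}$ becomes minimal'' and moves on; you correctly observe that the deletion test at line~8 is made against the \emph{current} rule set rather than the final one, and you sketch the right fix: since the survivor $(A,q)$ is already prime, if it were non-critical then (by Lemma~\ref{lem:critical_in_rep} applied to the current $\mathcal{R}$ and the incomparability of antecedents among prime implicates with the same head) every critical rule would still have a super-antecedent witness in $\mathcal{R}_{\mathrm{cur}}-(A,q)$, forcing $\mathcal{A}(\mathcal{R}_{\mathrm{cur}}-(A,q))=\mathcal{A}$ and hence deletion at line~8. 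This is exactly the argument needed to fill what the paper leaves implicit, so your proof is if anything more careful than the paper's at this point.
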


\begin{proof}
	The total time of the algorithm is $O(l^2)$ 
	since the algorithm calls the inference algorithm $l$ times.
	We show that the output is actually the set of critical rules.
	Notice that ${\cal A}({\cal R})$ does not change in each step.	
	Indeed, if $(A - a, q)$ is an implicate of ${\cal A}({\cal R})$,
	then ${\cal A}({\cal R}) \subseteq {\cal K}({\cal R} - (A,q) + (A -a, q)) \subseteq {\cal K}({\cal R})$ holds, implying ${\cal A}({\cal R}) = {\cal A}({\cal R} - (A,q) + (A -a, q))$ by maximality (Theorem~\ref{thm:uniqueness}).
	Similarly, if $(A, q)$ is an implicate of ${\cal A}({\cal R} - (A,q))$,
	then ${\cal A}({\cal R}) \subseteq {\cal A}({\cal R} - (A,q)) = {\cal A}({\cal R} - (A,q)) \cap {\cal K}(\{(A,q)\}) \subseteq {\cal K}({\cal R})$, 
	implying ${\cal A}({\cal R}) = {\cal A}({\cal R} - (A,q))$.
	
	In lines 3 to 6, the rule $(A,q)$ becomes prime.
	Indeed, suppose to the contrary 
	that  $(A - a, q)$ is an implicate of ${\cal A}({\cal R})$ for some $a \in A$.
	Then $(A' - a, q)$ is not an implicate for $A \subseteq A'$, 
	where $A'$ is equal to $A$ at the step of $a$ chosen.
	However, by $A-a \subseteq A' - a$ and (E2), 
	the rule $(A - a, q)$ cannot be an implicate, a contradiction.
	Therefore, the output ${\cal R}$ consists of prime implicates.
	By lines 8 and 9, the set ${\cal R}$ becomes minimal.
	By Theorem~\ref{thm:circuit}, this must be equal to the set of critical rules.  
\end{proof}

By using Algorithm~\ref{algo:critical}, 
we can efficiently check whether two given sets of rules define the same antimatroid.
Indeed, make both sets of rules critical, and compare them; 
they are equal if and only if they define the same antimatroid.
\begin{cor}
 Given two sets ${\cal R}$ and ${\cal R}'$ of rules, 
 we can determine whether ${\cal A}({\cal R}) = {\cal A}({\cal R}')$ in time $O(l^{2})$ 
 with $l = \max \{l({\cal R}), l({\cal R'})\}$.
\end{cor}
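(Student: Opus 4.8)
The plan is to reduce the test $\mathcal{A}(\mathcal{R}) = \mathcal{A}(\mathcal{R}')$ to a purely syntactic comparison of canonical representations. First I would run Algorithm~\ref{algo:critical} on $\mathcal{R}$ and, independently, on $\mathcal{R}'$, obtaining the sets $\mathcal{R}^{*}$ and $\mathcal{R}'^{*}$ of critical rules of $\mathcal{A}(\mathcal{R})$ and $\mathcal{A}(\mathcal{R}')$ respectively. By Theorem~\ref{thm:critical} each of these computations takes $O(l^{2})$ time, so the two together cost $O(l^{2})$.

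The core logical step is to show that $\mathcal{A}(\mathcal{R}) = \mathcal{A}(\mathcal{R}')$ if and only if $\mathcal{R}^{*} = \mathcal{R}'^{*}$ as sets of rules. One direction is immediate, since $\mathcal{A}(\mathcal{R}^{*}) = \mathcal{A}(\mathcal{R})$ and $\mathcal{A}(\mathcal{R}'^{*}) = \mathcal{A}(\mathcal{R}')$. For the converse I would invoke the uniqueness of the critical representation recorded after Lemma~\ref{lem:critical_in_rep} (equivalently, Theorem~\ref{thm:circuit}(2)): an antimatroid determines its set of critical rules. Thus if $\mathcal{A}(\mathcal{R}) = \mathcal{A}(\mathcal{R}')$, their critical-rule sets must coincide. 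The point that needs care — and which is exactly what Theorem~\ref{thm:critical} provides — is that the output of Algorithm~\ref{algo:critical} depends only on $\mathcal{A}(\mathcal{R})$, not on the particular Horn representation $\mathcal{R}$ supplied; once this is granted, "same antimatroid implies same canonical form'' is automatic.

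Finally I would compare $\mathcal{R}^{*}$ and $\mathcal{R}'^{*}$. Since $l(\mathcal{R}^{*}) \le l(\mathcal{R}) \le l$ and likewise $l(\mathcal{R}'^{*}) \le l$, this is cheap: for instance, sort each antecedent, then sort the list of rules into a canonical lexicographic order and scan the two sorted lists in parallel, which costs $O(l \log l)$, comfortably within the $O(l^{2})$ budget. Summing the three contributions yields the claimed $O(l^{2})$ bound. I do not expect any real obstacle here — beyond the two invocations of Algorithm~\ref{algo:critical} the argument is essentially a bookkeeping of running times plus the uniqueness statement, so the proof should be brief.
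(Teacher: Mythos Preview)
Your proposal is correct and follows essentially the same approach as the paper: compute the critical-rule sets of both inputs via Algorithm~\ref{algo:critical}, invoke the uniqueness of the critical representation, and compare. The paper dispatches this corollary in a single sentence without spelling out the comparison step or the equivalence argument, so your version is in fact more detailed than the original.
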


\subsection{Generating all nontrivial implicates of ${\cal A}({\cal R})$}

It is a natural problem to construct a superset $\mathcal{R}'$ of given ${\cal R}$ 
that satisfies $\mathcal{K}(\mathcal{R}')=\mathcal{A}(\mathcal{R})$. 
We do not know a polynomial time algorithm to construct such a set ${\cal R}'$; 
see Section~\ref{sec:concluding} for further discussion.
We here provide a simple algorithm to a related problem 
of generating all implicates of $\mathcal{A}(\mathcal{R})$ from ${\cal R}$.
As was seen in Theorem~\ref{thm:connection},  
the set ${\cal E}$ of all implicates of ${\cal A}({\cal R})$ 
obviously satisfies ${\cal K}({\cal E}) = {\cal A}({\cal R})$.
Our algorithm sharpens Dietrich's construction~\cite[Proposition 8]{Dietrich87} of 
all circuits from critical circuits, and 
may be comparable with the {\em resolution principle} 
(or the {\em consensus procedure}) 
in Boolean function theory, 
where the resolution is used to generate (prime) implicates of a Horn formula
(or ${\cal K}({\cal R})$); see \cite[Chapter 6]{Booleanfunctions}.

Our algorithm is based on the following variation of Proposition~\ref{prop:AE}.
\begin{lem}\label{lem:amres}
For a set ${\cal R}$ of rules, 
it holds $\mathcal{K}(\mathcal{R})=\mathcal{A}(\mathcal{R})$ if and only if 
for every pair of nontrivial implicates $(A,q)$,$(A', q')$  of $\mathcal{K}(\mathcal{R})$, 
both $((A\cup A') - q - q', q)$ and $((A\cup A') - q - q', q')$ are implicates of $\mathcal{K}(\mathcal{R})$. 
\end{lem}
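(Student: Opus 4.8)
The plan is to establish both directions by relating the condition on pairs of nontrivial implicates to the characterization (AE$'$) of Proposition~\ref{prop:AE}. Let ${\cal E}$ be the set of all implicates of ${\cal K}({\cal R})$; by Theorem~\ref{thm:connection}~(2), ${\cal E}$ is an entailment and ${\cal K}({\cal E}) = {\cal K}({\cal R})$. Note that ${\cal K}({\cal R}) = {\cal A}({\cal R})$ holds if and only if ${\cal K}({\cal R})$ is an antimatroid, which by Proposition~\ref{prop:AE} (applied to the entailment ${\cal E}$) is equivalent to ${\cal E}$ satisfying (AE$'$). So it suffices to show: ${\cal E}$ satisfies (AE$'$) if and only if for every pair of nontrivial implicates $(A,q), (A',q')$, both $((A \cup A') - q - q', q)$ and $((A \cup A') - q - q', q')$ are implicates.

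For the ``only if'' direction (assume (AE$'$)), I would take nontrivial implicates $(A,q)$ and $(A',q')$. If $q = q'$ the claim reduces to a monotonicity/resolution-type fact that follows from (E1) and (E2): since $A \subseteq (A \cup A') - q'$ (using $q = q' \notin A$ as $(A,q)$ is nontrivial), the rule $((A \cup A') - q - q', q) = ((A \cup A') - q, q)$ is an implicate by (E1)+(E2). The interesting case is $q \neq q'$. Set $X := (A \cup A') - q - q'$. From $(A,q) \in {\cal E}$ and $A \subseteq X + q'$ we get $(X + q'){\cal E}q$ by (E1)+(E2); symmetrically $(X + q){\cal E}q'$. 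Now apply (AE$'$) with $y = q'$, $z = q$ to conclude $X{\cal E}q$ and $X{\cal E}q'$, which is exactly the desired conclusion.

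For the ``if'' direction, I would verify (AE$'$) directly: suppose $(X + y){\cal E}z$ and $(X + z){\cal E}y$ with $y \neq z$. I want $X{\cal E}y$ and $X{\cal E}z$. The natural move is to apply the hypothesis to the two implicates $(X + y, z)$ and $(X + z, y)$; these are nontrivial provided $z \notin X + y$ and $y \notin X + z$, i.e.\ $y, z \notin X$ (the case where $y$ or $z$ lies in $X$ makes $X{\cal E}y$ or $X{\cal E}z$ immediate by (E1), and one then recovers the other coordinate via (E2) as in the proof of Proposition~\ref{prop:AE}, so it can be dispatched separately). With $A = X + y$, $q = z$, $A' = X + z$, $q' = y$, we compute $(A \cup A') - q - q' = (X + y + z) - z - y = X$, so the hypothesis yields $X{\cal E}z$ and $X{\cal E}y$ as required.

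The main obstacle I anticipate is the bookkeeping around trivial rules and the degenerate cases where elements coincide or already lie in $X$: one must be careful that $A \cup A'$, after removing $q$ and $q'$, genuinely equals the set $X$ appearing in (AE$'$), and that the rules fed into the hypothesis are nontrivial (or else handle them by the (E1)/(E2) argument of Proposition~\ref{prop:AE}). Once the correspondence $X \leftrightarrow (A \cup A') - q - q'$ is pinned down, the equivalence is essentially a restatement of (AE$'$) together with the monotonicity consequences of being an entailment.
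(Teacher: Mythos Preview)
Your proposal is correct and follows essentially the same approach as the paper: reduce to Proposition~\ref{prop:AE} by taking the entailment ${\cal E}$ of all implicates, then show the stated condition on pairs of nontrivial implicates is equivalent to (AE$'$) via the substitution $X = (A\cup A') - q - q'$ in one direction and $A = X+y$, $A' = X+z$ in the other. You are a bit more careful than the paper about the degenerate cases ($q=q'$; $y$ or $z$ in $X$), but the argument is the same.
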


\begin{proof}
The condition $\mathcal{K}(\mathcal{R})=\mathcal{A}(\mathcal{R})$ is equivalent to 
that ${\cal K}({\cal R})$ itself is an antimatroid.
By Proposition \ref{prop:AE}, it suffices to 
show that the condition of the statement is equivalent to (AE$'$). 

(Only-if part).
Here $((A \cup A') - q', q)$ and $((A \cup A') - q, q')$ 
are also implicates (by (E1), (E2)).
By (AE$'$) with $X=(A\cup A') - q - q'$, $x = q$, and $y = q'$, 
both $((A\cup A') - q - q', q)$ and $((A\cup A') - q - q', q')$ are implicates. 

(If part).
Notice that (AE$'$) trivially holds if $z \in X$ or $y \in X$.
Nontrivial cases of (AE$'$) follow from 
letting $A = X + y$, $A' = X + z$,  $y = q'$, and $z = q$.
\end{proof}
For two rules $(A,q)$ and $(A', q')$, define a rule $R(A,q;A',q')$ by
\[
R(A,q;A',q') := ((A \cup A') - q -q', q).
\]\
Consider the following simple procedure:
\begin{algo}[to generate all nontrivial implicates of ${\cal A}({\cal R})$]\label{algo:resolution}\ 
\begin{algorithmic}[1]
\REQUIRE A set $\mathcal{R}_0$ of (nontrivial) rules. 
\ENSURE All nontrivial implicates.
\STATE $\mathcal{R} \leftarrow \mathcal{R}_0$
\IF {there exist nontrivial rules $(A,q), (A',q')\in\mathcal{R}$ such that
$R(A,q;A',q')$ or $R(A',q';A,q)$ does not belong to ${\cal R}$} 
\STATE $
{\cal R} \leftarrow {\cal R} + R(A,q;A',q') + R(A',q';A,q)
$, 
\STATE and go to line 2. \ENDIF
\RETURN ${\cal R}$.
\end{algorithmic}
\end{algo}
According to the analogy of the resolution, the procedure in line 3 
is called an {\em antimatroidal resolution}.
The following is a sharpening of \cite[Proposition 8]{Dietrich87}.
\begin{thm}~\label{thm:amres}
Algorithm~\ref{algo:resolution} 
computes all nontrivial implicates of ${\cal A}({\cal R}_0)$.
In particular, it holds ${\cal K}({\cal R}) = {\cal A}({\cal R}_0)$.
\end{thm}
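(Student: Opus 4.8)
The plan is to show that Algorithm~\ref{algo:resolution} terminates with a set ${\cal R}$ that is closed under the antimatroidal resolution, and then to identify this ${\cal R}$ with the entailment ${\cal E}$ of all nontrivial implicates of ${\cal A}({\cal R}_0)$. Termination is immediate: every rule added has the form $((A\cup A')-q-q',q)$ with $A,A'\subseteq Q$, so all rules ever produced live in the finite set $2^Q\times Q$, and since line~3 strictly enlarges ${\cal R}$, the loop halts. Let ${\cal R}_\infty$ denote the output. The two containments to prove are ${\cal R}_\infty\subseteq{\cal E}$ and ${\cal E}\subseteq{\cal R}_\infty$.

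For ${\cal R}_\infty\subseteq{\cal E}$ I would argue inductively that every rule added is an implicate of ${\cal K}({\cal R}_0)$, hence of ${\cal A}({\cal R}_0)\subseteq{\cal K}({\cal R}_0)$. The base case ${\cal R}_0\subseteq{\cal E}$ holds because each rule in ${\cal R}_0$ trivially accepts every member of ${\cal K}({\cal R}_0)$. For the inductive step, suppose $(A,q)$ and $(A',q')$ are implicates of ${\cal K}({\cal R}_0)$; I need $R(A,q;A',q')=((A\cup A')-q-q',q)$ to be an implicate as well. This is exactly the content of the only-if direction reasoning in Lemma~\ref{lem:amres}, but here ${\cal K}({\cal R}_0)$ need not be an antimatroid, so instead I will invoke it differently: pass to ${\cal A}({\cal R}_0)$. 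Since $(A,q)$, $(A',q')$ are implicates of ${\cal K}({\cal R}_0)$, they are implicates of ${\cal A}({\cal R}_0)$; as ${\cal A}({\cal R}_0)$ is an antimatroid, its entailment satisfies (AE$'$) by Proposition~\ref{prop:AE}, and the only-if computation in the proof of Lemma~\ref{lem:amres} (with $X=(A\cup A')-q-q'$) shows $R(A,q;A',q')$ is an implicate of ${\cal A}({\cal R}_0)$. But we want it to be an implicate of ${\cal K}({\cal R}_0)$, which is stronger. The resolution does hold at the level of ${\cal K}({\cal R}_0)$ too, however: an implicate $(A,q)$ of ${\cal K}({\cal R})$ corresponds, via Lemma~\ref{lem:imp_equiv}, to $q\notin(Q\setminus A)^\circ$ in the union-closed family, and one checks that $((A\cup A')-q-q',q)$ accepts any $X\in{\cal K}({\cal R}_0)$: if $q\in X$ and $X\cap((A\cup A')-q-q')=\emptyset$, then $X\cap A\subseteq\{q'\}$, so if $q'\notin X$ we contradict $(A,q)$, and if $q'\in X$ then $X\cap A'=\emptyset$ (as $q\notin A'$ by nontriviality of $(A',q')$ modulo the trivial-rule handling), contradicting $(A',q')$. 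Thus ${\cal R}_\infty\subseteq{\cal E}$, and consequently ${\cal A}({\cal R}_0)={\cal K}({\cal E})\subseteq{\cal K}({\cal R}_\infty)\subseteq{\cal K}({\cal R}_0)$, forcing ${\cal K}({\cal R}_\infty)={\cal K}({\cal E})={\cal A}({\cal R}_0)$.

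For the reverse inclusion ${\cal E}\subseteq{\cal R}_\infty$, I would show ${\cal R}_\infty$ is an entailment and that ${\cal K}({\cal R}_\infty)$ is an antimatroid; then by Theorem~\ref{thm:connection}(2) the entailment of ${\cal K}({\cal R}_\infty)$ equals ${\cal R}_\infty$, and since that entailment contains all implicates of ${\cal A}({\cal R}_0)={\cal K}({\cal R}_\infty)$ — i.e.\ equals ${\cal E}$ — we get ${\cal E}={\cal R}_\infty$. The termination condition guarantees ${\cal R}_\infty$ is closed under $R(\cdot;\cdot)$ on nontrivial rules, which by Lemma~\ref{lem:amres} is precisely (AE$'$) for ${\cal R}_\infty$; so once ${\cal R}_\infty$ is known to be an entailment, Proposition~\ref{prop:AE} yields that ${\cal K}({\cal R}_\infty)$ is an antimatroid. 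The remaining point — and the one I expect to be the main obstacle — is that Algorithm~\ref{algo:resolution} as written only closes under resolution, not under the entailment axioms (E1), (E2); so ${\cal R}_\infty$ need not literally be an entailment. I would handle this by weakening the target: rather than claim ${\cal R}_\infty$ is the full entailment ${\cal E}$, apply Lemma~\ref{lem:amres} directly to ${\cal R}={\cal R}_\infty$. Since ${\cal R}_\infty$ is closed under the antimatroidal resolution and ${\cal K}({\cal R}_\infty)={\cal A}({\cal R}_0)$ from the previous paragraph, and since resolution closure is equivalent (via Lemma~\ref{lem:amres}, now applied with the roles of implicates of ${\cal K}({\cal R}_\infty)$) to ${\cal K}({\cal R}_\infty)$ being an antimatroid — which we already know — the genuine content is just that every nontrivial implicate of ${\cal A}({\cal R}_0)$ appears in ${\cal R}_\infty$. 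This I would extract from Dietrich's result \cite[Proposition 8]{Dietrich87}: starting from any generating family of circuits, iterated resolution produces all circuits; the present statement is the analogous assertion for the larger set of all implicates, and the proof is the same closure argument applied to prime implicates together with the observation that any implicate is obtained from a prime one by adding spurious elements to its body, a step compatible with the resolution operator. I would present this final reduction carefully, as it is the only nonroutine part.
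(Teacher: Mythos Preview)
Your proposal has the right architecture but leaves the essential step unproved, and one of the workarounds contains an actual error.

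First, a concrete mistake: the claim that antimatroidal resolution preserves implicates of ${\cal K}({\cal R}_0)$ is false. Take $Q=\{1,2\}$, ${\cal R}_0=\{(\{1\},2),(\{2\},1)\}$; then $\{1,2\}\in{\cal K}({\cal R}_0)$, but the resolution yields $(\emptyset,2)$, which $\{1,2\}$ does not accept. In your element-level argument the bug is the parenthetical ``$q\notin A'$ by nontriviality of $(A',q')$'': nontriviality only gives $q'\notin A'$, not $q\notin A'$. So the sentence ``The resolution does hold at the level of ${\cal K}({\cal R}_0)$ too'' must be dropped, and with it the chain ${\cal A}({\cal R}_0)\subseteq{\cal K}({\cal R}_\infty)\subseteq{\cal K}({\cal R}_0)$ no longer forces ${\cal K}({\cal R}_\infty)={\cal A}({\cal R}_0)$ (it never did, even granting the chain --- that inclusion is in general strict).

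Second, the real gap. You correctly isolate the crux: one must show that ${\cal R}_\infty$ together with the trivial rules is an entailment, after which Theorem~\ref{thm:connection} and Lemma~\ref{lem:amres} finish the job. But your attempts to sidestep this are circular (Lemma~\ref{lem:amres} speaks of \emph{all} implicates of ${\cal K}({\cal R}_\infty)$, not just the rules in ${\cal R}_\infty$, so closure of ${\cal R}_\infty$ under resolution is not yet the hypothesis of the lemma until you know ${\cal R}_\infty$ already contains all implicates) and the final appeal to Dietrich is a placeholder, not an argument. The paper does not sidestep: it proves (E2) directly. Given $(A,b_1),\ldots,(A,b_k)\in{\cal R}_\infty\cup{\cal O}$ and $(B,q)\in{\cal R}_\infty\cup{\cal O}$ with $B=\{b_1,\ldots,b_k\}$, one shows by induction on $i$ that $(A\cup(B\setminus\{b_1,\ldots,b_i\}),q)\in{\cal R}_\infty$, since each such rule is $R\bigl(A,b_{i+1};\,A\cup(B\setminus\{b_1,\ldots,b_i\}),q\bigr)$ and ${\cal R}_\infty$ is resolution-closed. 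At $i=k'$ (after exhausting the nontrivial $(A,b_i)$) this yields $(A,q)$. Once (E2) is in hand, ${\cal R}_\infty\cup{\cal O}$ is the full set of implicates of ${\cal K}({\cal R}_\infty)$, so Lemma~\ref{lem:amres} now legitimately applies to give ${\cal K}({\cal R}_\infty)={\cal A}({\cal R}_\infty)={\cal A}({\cal R}_0)$, and both conclusions follow.
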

\begin{proof}
	By Lemma~\ref{lem:amres}, the output ${\cal R} (\supseteq {\cal R}_0)$ 
	consists of implicates of ${\cal A}({\cal R}_0)$, and
	it holds ${\cal A}({\cal R}) = {\cal A}({\cal R}_0)$.
	Let ${\cal O}$ be the set of all trivial rules (implicates).
	Obviously ${\cal K}({\cal R}) = {\cal K}({\cal R} \cup {\cal O})$ and ${\cal A}({\cal R}) = {\cal A}({\cal R} \cup {\cal O})$. 
	We claim that ${\cal R} \cup {\cal O}$ is an entailment.
	 Suppose that this is true.
	 Then the entailment ${\cal R} \cup {\cal O}$
	 is the set of all implicates of ${\cal K}({\cal R} \cup {\cal O}) = {\cal K}({\cal R})$ by Theorem~\ref{thm:connection}, and satisfies the condition of Lemma~\ref{lem:amres} 
	 since ${\cal R}$ is closed under antimatroidal resolutions.
	 Hence ${\cal K}({\cal R} \cup {\cal O}) = {\cal A}({\cal R} \cup {\cal O}) = {\cal A}({\cal R}) = {\cal A}({\cal R}_0)$, and
	 ${\cal R}$ is the set of all nontrivial implicates of ${\cal A}({\cal R}_0)$.

	Thus it suffices to show that ${\cal R} \cup {\cal O}$ satisfies (E2) 
	(since (E1) follows from (E2) and trivial implicates).
Let $(A, b_{1})$, $(A, b_{2}), \ldots , (A, b_{k})$ and $(B,q)$ be rules in ${\cal R} \cup {\cal O}$ 
with  $B=\{b_{1}, b_{2}, \ldots , b_{k}\}$.
We show that $(A,q) \in {\cal R} \cup {\cal O}$.
We may assume that $q \notin A \cup B$ (otherwise $(A,q) \in {\cal R} \cup {\cal O}$).
Suppose that $(A, b_{1}),\ldots , (A, b_{k'}) \in {\cal R}$ and 
$(A, b_{k'+1}), \ldots , (A, b_{k}) \in {\cal O}$ for some $k'\in \{1,2,\ldots,k\}$.
We show by induction that for $i=1,\ldots , k'$, 
one can deduce $(A\cup (B\setminus \{b_{1}, \ldots , b_{i}\}), q)$ by antimatroidal resolutions, 
i.e., it holds $(A\cup (B\setminus \{b_{1}, \ldots , b_{i}\}), q) \in {\cal R}$.
The case $i=k'$ is the desired claim (since $A\cup (B\setminus \{b_{1}, \ldots , b_{i}\}) = A$). 
First, nontrivial rule $(A\cup (B\setminus \{b_{1}\}),q)$ is deduced from nontrivial $(A, b_{1})$ and $(B, q)$. 
Next, assume that $(A\cup(B\setminus \{b_{1}, \ldots , b_{i}\}), q)$ was deduced for some $i$ with $2\leq i \leq k'-1$ by antimatroidal resolutions. 
Then $(A\cup (B\setminus \{b_{1},\ldots , b_{i+1}\}),q)$ 
is deduced from $(A, b_{i+1})$ and $(A\cup(B\setminus \{b_{1}, \ldots , b_{i}\}), q)$. 
Thus $(A,q) \in {\cal R} \cup {\cal O}$ as required.
\end{proof}
\renewcommand{\proofname}{Proof}

\section{Application to educational systems}\label{sec:application}
In this section, we mention possible applications 
of our results to the design of computer-aided educational systems.
As mentioned in the introduction, 
an antimatroid is used as a mathematical model of the space of knowledge states of learners, 
and is called a learning space in the literature of Knowledge Space Theory (KST).
The ground set $Q$ is a set of questions in a certain domain, 
and the knowledge state of a learner is associated with a subset $X \subseteq Q$
which he/she answers correctly.
The collection of all possible knowledge states forms a family ${\cal L}$ of subsets of $Q$.
A KST-based educational system gives questions to a learner, estimates his/her knowledge state $X \in {\cal L}$ 
according to the answers, and poses questions for the subjects that he/she can acquire next.
If the state of the learner reaches $Q$, then it might be said that 
the learner masters all subjects in the domain.
The hypothesis that 
$\mathcal{L}$ is an antimatroid (a learning space) is reasonable as well as useful 
in the above learning process. 
Indeed, for the state $X$ of a learner, the outer fringe $X^{\cal O}$ (defined in (\ref{eqn:outer}) in Section~\ref{sec:enumeration})
is always nonempty, provided ${\cal L}$ is an antimatroid. 
Therefore the system naturally chooses a question $q$ from $X^{\cal O}$ and poses $q$ to the learner.

To realize the above learning process, 
the educational system needs to know, in advance, the space ${\cal L}$ of knowledge states.
The space ${\cal L}$ is constructed with the help of a human expert (teacher) of the domain of the questions.
It is practically impossible to ask to the expert whether $X$ is a state in ${\cal L}$ 
for all subsets $X \subseteq Q$, 
because this needs a huge number $2^{|Q|}$ of queries. 
Koppen~\cite{K1993}, Koppen and Doignon~\cite{KD1990}, and Dowling-M\"{u}ller~\cite{Dowling93,M1989} 
introduced an alternative procedure to construct ${\cal K}$ by 
querying rules $(A_1,q_1), (A_2,q_2),\ldots$ to the expert, 
where the query $(A,q)$ means: 
\begin{description}
	\item[{\rm (Q$_{A,q}$)}] ``Does a learner fail the question $q$, provided he/she fails every question in $A$ ?"
\end{description}
Then the space of knowledge states is estimated as ${\cal K}({\cal P})$ 
for the set ${\cal P}$ of queries to which the expert said ``yes".
The point is that it is not necessary to ask all queries, thanks to the inference rules (E1) (E2). 
Let ${\cal P}$ and ${\cal N}$ be 
the sets of the queries to which the expert said ``yes" and ``no", respectively.
If $(A,q)$ is an implicate of ${\cal K}({\cal P})$, 
then the query $(A,q)$ is automatically determined to be `yes' at this moment.
Such a query $(A,q)$ is called a {\em positive inference} of ${\cal P}$.
Let ${\cal P}^{\star}$ denote the set of all positive inferences of ${\cal P}$. 
Similarly, there are queries automatically determined to be `no' 
by the following rules obtained by (E1) and (E2):
\begin{description}
\item[{\rm (NI-1)}] If $(A,p)$ is `yes', $(A,q)$ is `no', and $(A+p,b)$ is `yes' for all $b \in B$, then  $(B,q)$ is `no'.
\item[{\rm (NI-2)}] If $(A,p)$ is `yes', $(B,p)$ is `no', and $(B+q,a)$ is `yes' for all $a \in A$, then  $(B,q)$ is `no'.
\item[{\rm (NI-3)}]  If $(A,p)$ is `no', $(B+q,p)$ is `yes', and $(A,b)$ is `yes' for all $b \in B$, then $(B,q)$ is `no'.
\end{description}
A {\em negative inference} of ${\cal P}$ and ${\cal N}$ is a `no' query $(B,q)$ 
obtained by repeated applications of~(NI-1), (NI-2) and (NI-3).
Let ${\cal N}^{\star}$ denote the set of all negative inferences of ${\cal P}$ and ${\cal N}$.
Dowling~\cite{Dowling93} suggests a useful characterization of negative inferences: 
\begin{description}
	\item[{\rm (NI$'$)}] 
	$(A,q) \in {\cal N}^{\star}$ if and only if some query in ${\cal N}$ 
	is an implicate of ${\cal K}({\cal P} + (A,q))$. 
\end{description}
Thus there are three types of queries in each step: positive inferences, negative inferences, 
and other queries that are called {\em undetermined}. 
Notice again that positive inferences and negative inferences are redundant to be asked.

The QUERY algorithm updates positive and negative inferences
by using the inference rules, once the expert returns the answer. 
The algorithm next chooses and gives an undetermined query to the expert.
Koppen~\cite{K1993} suggested the following selection rule of queries.
In the first stage, queries are of form $(\{p\},q)$.
In the second stage, queries are of form $(\{p,p'\},q)$.
In the $i$-th stage, queries are of form $(A,q)$ with $|A| = i$. 
He also suggested a stopping criterion 
that after the $i$-th stage there is no undetermined query $(A,q)$ with $|A| = i+1$.
Positive and negative inferences are collected in a table, and are used 
by the selection of queries asked to the expert.
There are several selection rules of queries~\cite[Section 15.2.9]{FD2011}.
The resulting space ${\cal K}({\cal P})$ of knowledge states 
is constructed from the table; see~\cite[Section 15.2]{FD2011}.

Dowling~\cite{Dowling93} developed 
a sophisticated version of the QUERY algorithm.
Instead of keeping all positive and negative inferences, 
her algorithm keeps the base of current ${\cal K}({\cal P})$
and the set $m({\cal N}^{\star})$ of maximal negative inferences.
A {\em maximal} negative inference is a negative inference $(A,q)$ with the property that  
$(A',q)$ is not a negative inference for every $A' \supset A$.
Then a query $(B,q)$ is a negative inference if and only if $B \subseteq A$ 
for some maximal negative inference $(A,q)$. 
Thus the set ${\cal N}^{\star}$ of negative inferences is manageable 
by the set $m({\cal N}^{\star})$ of maximal negative inferences.
Recall the the base ${\cal B}$ of ${\cal K}({\cal P})$ 
is the set of members that cannot be the union of other members of ${\cal K}({\cal P})$.
The inference problem (i.e., checking whether $(A,q) \in {\cal P}^{\star}$) 
can be easily solved by the base; see~\cite[Proposition 3.2]{Dowling93}.
Dowling gave explicit formulas of updating ${\cal B}$ and $m({\cal N}^{\star})$,
once the expert returns an answer~\cite[Theorems 4.1, 4.2, 4.3]{Dowling93}.
All states of
the resulting space ${\cal K}({\cal P})$ are 
efficiently generated from~${\cal B}$; see~\cite{Dowling93a}.

\paragraph{Revised QUERY algorithm.}
The QUERY algorithm was designed for the case where the target space
is assumed to be a union-closed family (a knowledge space).
Therefore the output ${\cal K}$ is not necessarily an antimatroid.
We present a simple revision of the QUERY algorithm to output an antimatroid.
Our revision is obtained by replacing ${\cal K}({\cal P})$ with ${\cal A}({\cal P})$, 
and is understood as a concrete realization of Doignon's adjusted QUERY algorithm~\cite{Doignon14ICFCA}.

As above, let ${\cal P}$ and ${\cal N}$ 
denote the sets of queries to which the expert said ``yes" and ``no", respectively.
We can naturally define positive/negative inferences for the revised QUERY.
A {\em strong positive inference} of ${\cal P}$ is an implicate of ${\cal A}({\cal P})$, 
and a {\em strong negative inference} of ${\cal P}$ and ${\cal N}$ is 
a query $(A,q)$ such that some query in ${\cal N}$ is 
an implicate of ${\cal A}({\cal P} + (A,q))$.
Let ${\cal P}^{\star \star}$ and ${\cal N}^{\star \star}$ denote the sets of all strong positive and negative inferences, respectively.
We remark
\[
{\cal P}^{\star} \subseteq {\cal P}^{\star \star},\ 
{\cal N}^{\star} \subseteq {\cal N}^{\star \star}
\]
by ${\cal A}({\cal P}) \subseteq {\cal K}({\cal P})$ and 
${\cal A}({\cal P} + (A,q)) \subseteq {\cal K}({\cal P} + (A,q))$.
Strong positive or negative inferences are redundant to be asked, since 
the estimated space is now ${\cal A}({\cal P})$.
The revision of QUERY is as follows.
\begin{description}
	\item[Revised QUERY algorithm] 
	\item[0:] Let ${\cal P} = {\cal P}^{\star \star} = {\cal N} = {\cal N}^{\star \star}:= \emptyset$
	\item[1:] Choose a query 
	$(A,q) \not \in {\cal P}^{\star \star} \cup {\cal N}^{\star \star}$ 
	(with smallest $|A|$), and ask the question (Q$_{A,q}$) to the expert.
	\item[2:] If the answer is ``yes", then add $(A,q)$ to ${\cal P}$
	(, and make ${\cal P}$ critical). 	
	If the answer is ``no", then add $(A,q)$ to ${\cal N}$.
	\item[3:] Update ${\cal P}^{\star \star}$ and ${\cal N}^{\star \star}$.  
	\item[4:] If a stopping criterion is fulfilled, then output ${\cal P}$ (or ${\cal A}({\cal P})$); stop. Otherwise go to step 1. 
\end{description}
Let us look details of this algorithm. 
In step 3, we can use  Algorithm~\ref{algo:resolution} to generate all strong positive inferences, i.e., 
all implicates of ${\cal A}({\cal P})$. 
In the practical situation where the storage for queries is limited (or $Q$ is large), 
we can use the inference algorithm (Algorithm~\ref{algo:inference}) 
to generate strong positive inferences $(A,q)$ with a specified size of $|A|$.
Similarly, we can generate strong negative inferences 
by applying the inference algorithm to queries in ${\cal N}$ for ${\cal A}({\cal P} + (A,q))$.
Koppen's termination criterion and other selection rules of queries are implementable.
By the use of Algorithm~\ref{algo:critical}, 
we may keep ${\cal P}$ compact.
All states in ${\cal A}({\cal P})$ can be efficiently generated by Algorithm~\ref{algo:enumeration}.

In the case where the output is required to be a proper antimatroid, 
we may modify step 2 as: 
If the answer is ``yes" and $Q \in {\cal A}({\cal P} + (A,q))$, then add $(A,q)$ to ${\cal P}$. 
Actually Doignon's adjusted QUERY adopts this rule.
The condition $Q \in {\cal A}({\cal P} + (A,q))$ can be 
checked by the membership algorithm (Algorithm~\ref{algo:membership}).

In the idealistic case where the answers of the expert correctly follows 
his/her latent antimatroid ${\cal L}$, by Theorem~\ref{thm:uniqueness} it always holds
\[
{\cal L} \subseteq {\cal A}({\cal P}) \subseteq {\cal K}({\cal P}).
\]
Therefore the resulting ${\cal A}({\cal P})$ might be 
a reasonable outer approximation of the true antimatroid ${\cal L}$.
Also sufficiently many queries uncover
${\cal L}$ as ${\cal L} = {\cal A}({\cal P})$; see \cite[Proposition 13]{Doignon14ICFCA}. 
\begin{rem}
	J.-P. Doignon asked what about the revised QUERY is applied 
	to the expert following a union-closed family ${\cal L}$.
	Also, in this case,  ${\cal L} \subseteq {\cal K}({\cal P})$ holds throughout iterations.
	Therefore, by maximality (Theorem~\ref{thm:uniqueness}), 
	$\check{\cal L} \subseteq {\cal A}({\cal P})$ holds.
	We could not guarantee that 
	the revised QUERY reaches $\check{\cal L} = {\cal A}({\cal P})$.
	The reason is the existence of a query in ${\cal N}$ that is `no' for ${\cal L}$ but is `yes' for $\check{\cal L}$. Consequently,
	even if a query $(A,q)$ satisfies
	$\check{\cal L} \subseteq  {\cal A}({\cal P} + (A,q)) \subset {\cal A}({\cal P})$,
	the query $(A,q)$ may fall into ${\cal N}^{\star \star}$ and is never posed. 
	
	This is not the case if a query is chosen outside ${\cal P}^{\star \star} \cup {\cal N}^{\star}$ in each iteration.
%
	In fact, if ${\cal A}({\cal P}) \setminus \check{\cal L} \neq \emptyset$, there is 
	$(A,q) \not \in {\cal P}^{\star \star} \cup {\cal N}^{\star}$ such that $(A,q)$ is an implicate of ${\cal L}$ and
	$\check{\cal L} \subseteq {\cal A}({\cal P} + (A,q)) \subset {\cal A}({\cal P})$;
	such a query is posed to decrease ${\cal A}({\cal P})$.
	To see this, consider minimal $X \in {\cal A}({\cal P}) \setminus \check{\cal L}$,  
	and consider $X^{\circ}$ with respect to $\check{\cal L}$. By the minimality and Proposition~\ref{prop:path_maximal}, 
	it must hold $|X \setminus X^{\circ}| = 1$.
	Therefore $X$ is not in ${\cal L}$, 
	and there is an implicate $(A,q)$ 
	of ${\cal L}$ (and of $\check{\cal L}$) not accepting $X (\in {\cal A}({\cal P}))$. 
	Obviously $(A,q) \not \in {\cal P}^{\star \star}$. Also $(A,q) \not \in {\cal N}^{\star}$
	by ${\cal L} \subseteq {\cal K}({\cal P} + (A,q))$. 
	Thus $(A,q)$ is a desired query.
	\end{rem}
\begin{rem}
	Our revision can incorporate 
	Dowling's update of maximal negative inferences. 
	Her formulas~\cite[Theorems 4.2, 4.3]{Dowling93}　
	only involve the closure operator of (dual of) ${\cal K}({\cal P})$.
	Thus the desired update formulas are obtained simply
	by changing the closure operator of ${\cal K}({\cal P})$ to the closure operator of ${\cal A}({\cal P})$.
	By Theorem~\ref{thm:closure_operator}, the closure operator of ${\cal A}({\cal P})$ is efficiently computable.
	On the other hand, the base update~\cite[Theorems 4.1]{Dowling93}　
	seems not to be adapted directly to ${\cal A}({\cal P})$.
	This issue is left to future work.
\end{rem}
 \begin{ex}
 	We give one small but instructive example.
 	Let $Q = \{0,1,2,3\}$ and ${\cal R}$ consist of two rules $(\{0,2\},1)$ and $(\{1,3\},0)$. 
 	The goal is to identify ${\cal L} := {\cal A}({\cal R})$ by QUERY algorithms.
 	Queries are examined, in the same order, for both original and revised QUERY algorithms.
 	Algorithms terminates if 
 	${\cal L}= {\cal K}({\cal P})$ for original and ${\cal L}= {\cal A}({\cal P})$ for revised.
 	Table~\ref{tab:ex} describes the behavior of two algorithms.
 	\begin{table}
 		\caption{Behavior of original and revised QUERY algorithms}
 		\begin{center}
 			\begin{tabular}{|c||c c|}
 				\hline
 				query &   original   & revised \\
 				\hline
 				$(A,q)$: $|A| \leq 1$ & posed:NO & posed:NO \\
 				$(\{0, 1 \},2)$ & posed:NO & posed:NO \\
 				$(\{0, 1 \},3)$ & posed:NO & posed:NO \\ 
 				$(\{0, 2 \},1)$ & posed:YES & posed:YES  \\
 				$(\{0, 2 \},3)$ & posed:NO &  posed:NO \\
 				$(\{0, 3 \},1)$ & posed:NO &  posed:NO\\
 				$(\{0, 3 \},2)$ & negainf & negainf \\
 				$(\{1, 2 \},0)$ & posed:NO &  negainf  \\
 				$(\{1, 2 \},3)$ & negainf & negainf  \\
 				$(\{1, 3 \},0)$ & posed:YES & posed:YES  \\
 				$(\{1, 3 \},2)$ & posed:NO  &   \\
 				$(\{2, 3 \},0)$ & posed:YES & \\
 				\hline
 			\end{tabular}\label{tab:ex}
 		\end{center}
 	\end{table}
 	The first column indicates queries, which are examined from top to down, 
 	and the second and third columns indicate 
 	the actions of the original and revised QUERY, respectively. 
 	Here posed:YES (resp. posed:NO) means that the query 
 	in the same row  is posed to the expert and the answer is ``yes" (resp. ``no"), 
 	and negainf means that 
 	the query is a (strong) negative inference (in revised QUERY) and is not posed.
 	The first $16$ (nontrivial) queries of form $(A,q)$ with $|A| \leq 1$ are posed for both original and revised, and the answers are all ``no''.
 	In total, 
 	the original QUERY posed $25$ queries to the expert. 
 	The revised QUERY posed  $22$ queries, and finished two queries earlier 
 	than the original QUERY, which is caused by ${\cal L} = {\cal A}({\cal R})  \subset {\cal K}({\cal R})$. 
 	Query $(\{1,2\}, 0)$ is not a negative inference but a strong negative inference, and hence is not posed in the revised QUERY. 
 \end{ex}

\paragraph{Preliminary experimental results.}
We conducted preliminary computer experiments to investigate how
the revision contributes to the reduction of the number of queries asked.
We prepare, in computer, a target antimatroid ${\cal L}$
and an idealistic expert who answers query $(A,q)$ correctly.
Namely the expert answers ``yes" if  $(A,q)$ is an implicate of ${\cal L}$, and ``no" otherwise. 
The goal is to identify ${\cal L}$ by queries. 
We compare two QUERY algorithms.
The first algorithm is (a simpler version of ) the original QUERY algorithm. 
In each step, the algorithm poses a query 
$(A,q) \not \in {\cal P}^{\star} \cup {\cal N}^{\star}$, 
where  ${\cal P}$ and ${\cal N}$ are 
the sets of `yes' queries and `no' queries, respectively, obtained so far. 
The algorithm terminates when ${\cal K}({\cal P}) = {\cal L}$.
The second algorithm is a simpler version of our revised QUERY algorithm, 
which is obtained from the first one  
by replacing ${\cal P}^{\star} \cup {\cal N}^{\star}$ 
with ${\cal P}^{\star \star} \cup {\cal N}^{\star \star}$
and replacing the termination criterion ${\cal K}({\cal P}) = {\cal L}$
with ${\cal A}({\cal P}) = {\cal L}$.
Thanks to the algorithms in the previous section, they are efficiently implementable in computer.
We compare the numbers of queries posed to the expert.

The experiment was done as follows.
The ground set $Q$ consists of $10$ elements. 
We applied the above two algorithms to $200$ instances of target antimatroids ${\cal L}$, 
and compared the numbers of queries posed to identify ${\cal L}$, where 
the target antimatroid ${\cal L}$ is given 
by ${\cal L} := {\cal A}({\cal R}_0)$ for a set ${\cal R}_0$ of randomly chosen $10$ rules.
The both algorithms examine,  in the same order, queries $(A,q)$ from smaller $|A|$.
We count the number of queries that are posed to the expert. 

The result is summarized as follows.
In average, 
the first algorithm posed $2128$ queries and the second algorithm posed $1525$ queries.
Table~\ref{tab:rate} shows the distribution of instances 
with respect to the reduction of queries.
\begin{table}
	\caption{Result}
	\begin{center}
		\begin{tabular}{|l||c|}
			\hline
			Rate $r$ of cut & Number of instances \\
			\hline
			$ 0 \leq   r \leq 10$ &  1 \\
			$10 < r \leq 15$  & 2  \\
			$15 < r  \leq 20$ & 7 \\
			$20 < r \leq 25$ & 37 \\
			$25 < r  \leq 30$ & 55 \\
			$30 < r \leq 35$  & 70 \\
			$35 < r \leq 40$ & 24 \\
			$40 < r$    & 4 \\
			\hline
		\end{tabular}\label{tab:rate}
	\end{center}
\end{table}
Here the rate~$r$ of cut of queries (from original to revised) 
is defined as
\begin{equation*}
\frac{N_1 - N_2}{N_1} \times 100,
\end{equation*}
where $N_1$ and $N_2$ denote the numbers of queries posed, respectively, by the original algorithm and 
by the revised algorithm (for the same target).
In particular, the revision achieves the average cut rate $29 \%$.
This reduction of queries was mostly caused by negative inferences.
For queries $(A,q)$ with smaller $A$, the answers tend to be `no'. 
Indeed, $99 \%$ of queries posed are `no' (i.e., posed:NO);
the average number of queries that the expert said ``no" is $2107$ for the first algorithm
and is $1505$ for the second algorithm.
Consequently, the query reduction by (strong) negative inferences is 
more powerful than that by (strong) positive inferences.
In the most successful instance, the cut rate is $42 \%$. 

We also conducted the same experiment for the reverse ordering of queries, 
where queries are examined from larger $|A|$. 
(This is a difficult situation for a human expert.)
Also in this case, our revision effectively reduces the number of queries posed.
In average, the original QUERY posed $434$ queries and the revised QUERY posed $207$ queries.
Thus the cut rate is $52 \%$ in average. 
Compared with the above ordering, 
the numbers of required queries are considerably 
small for both the original and revised.
This may be caused by our random construction of instances.
Contrary to the above ordering of queries,  
posed queries tend to be `yes';
the average number of queries 
that the expert said ``yes" is $390$ for the original
and is $163$ for the revised.
Consequently strong positive inferences contribute 
the reduction of queries effectively.

These experimental results show that the revised QUERY 
has a potential to drastically reduce 
the burden of human experts 
in the antimatroid design in KST-based educational system.

\section{Concluding remarks}\label{sec:concluding}
In this paper, we have studied the representation ${\cal R} \mapsto {\cal A}({\cal R})$ of an antimatroid 
from algorithmic and Boolean function theoretic points of view, and mentioned its potential applications to 
actual educational system designs.
There remain several algorithmic questions 
that are interesting from both theoretical and practical sides.
We end this paper with some open problems and future research issues.
\paragraph{How to recognize whether ${\cal K}({\cal R})$ is an antimatroid.}
We have mainly focused on ${\cal A}({\cal R})$ that is always an antimatroid.
As we mentioned in the introduction,
an antimatroid ${\cal A}$ always 
admits a set ${\cal R}$ of rules with ${\cal A} = {\cal K}({\cal R})$.
Thus it is natural to give a characterization of a set ${\cal R}$ of rules 
such that ${\cal K}({\cal R})$ is an antimatroid, or equivalently, 
${\cal K}({\cal R}) = {\cal A}({\cal R})$.
Related to such a characterization, 
it is quite natural to consider the following decision problem:
\begin{description}
	\item[Input:] A set ${\cal R}$ of rules.
	\item[Task:] Decide whether ${\cal K}({\cal R})$ is an antimatroid. 
\end{description}
We do not know whether this problem is in NP, 
though it is not difficult to show that it is in co-NP.
\begin{prop}
	The problem of deciding whether ${\cal K}({\cal R})$ is an antimatroid is in co-NP.
\end{prop}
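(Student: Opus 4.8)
The plan is to exhibit a short certificate witnessing that $\mathcal{K}(\mathcal{R})$ fails to be an antimatroid. Since $\mathcal{K}(\mathcal{R})$ is automatically union-closed (and contains $\emptyset$), the only way it can fail to be an antimatroid is via a violation of accessibility: there is a nonempty $X \in \mathcal{K}(\mathcal{R})$ such that $X - x \notin \mathcal{K}(\mathcal{R})$ for every $x \in X$. So I would take the certificate to be such a set $X$, which has size at most $n = |Q|$, hence polynomial in $l(\mathcal{R})$.

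First I would argue that this certificate is verifiable in polynomial time. Given a candidate $X$, one checks $X \in \mathcal{K}(\mathcal{R})$ in linear time (this is the easy membership test for $\mathcal{K}(\mathcal{R})$ noted in the introduction), and then for each of the at most $n$ elements $x \in X$ one checks $X - x \notin \mathcal{K}(\mathcal{R})$, again each in linear time; the total is $O(nl)$. If all these checks pass, $X$ is a genuine witness that accessibility fails, so $\mathcal{K}(\mathcal{R})$ is not an antimatroid. Conversely, if $\mathcal{K}(\mathcal{R})$ is not an antimatroid, then by union-closedness it must violate accessibility, so such an $X$ exists and serves as a certificate. This establishes membership in co-NP.

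I do not expect any real obstacle here; the argument is essentially a direct unwinding of the definition of antimatroid together with the fact that union-closedness of $\mathcal{K}(\mathcal{R})$ is free. The one point to state carefully is why a \emph{bounded-size} witness of non-accessibility always exists when the family fails to be an antimatroid — but this is immediate, since the witness $X$ is itself a subset of the ground set $Q$, hence of size at most $n$, and need not be accompanied by any further data. (The contrast with the open NP question is instructive: a \emph{positive} certificate — a tight path from $\emptyset$ to each member — would need to cover exponentially many members of $\mathcal{K}(\mathcal{R})$, which is why membership in NP is unclear.) So the proof is short: define the certificate, verify it in polynomial time in both directions, and conclude.
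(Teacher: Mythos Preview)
Your proof is correct. Both you and the paper take a single subset $X \subseteq Q$ as the NO-certificate, but the verification steps differ. The paper uses the characterization ``$\mathcal{K}(\mathcal{R})$ is an antimatroid iff $\mathcal{K}(\mathcal{R}) = \mathcal{A}(\mathcal{R})$'' and takes as certificate any $X \in \mathcal{K}(\mathcal{R}) \setminus \mathcal{A}(\mathcal{R})$, verifying this via the linear-time membership algorithm for $\mathcal{A}(\mathcal{R})$ developed earlier (Theorem~\ref{thm:membership}). You instead take the more restrictive certificate of a direct accessibility violator --- a nonempty $X \in \mathcal{K}(\mathcal{R})$ with $X - x \notin \mathcal{K}(\mathcal{R})$ for all $x \in X$ --- and verify it using only the trivial membership test for $\mathcal{K}(\mathcal{R})$. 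Your approach is more elementary and self-contained, not relying on any of the paper's algorithmic machinery; the paper's approach has the virtue of accepting a larger class of certificates (any $X$ outside $\mathcal{A}(\mathcal{R})$, not just one whose immediate predecessors all fail), at the cost of invoking Theorem~\ref{thm:membership}. Either route establishes co-NP membership equally well.
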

\begin{proof}
	Suppose that ${\cal K}({\cal R})$ is not an antimatroid, i.e., ${\cal K}({\cal R}) \neq {\cal A}({\cal R})$.
	There is $X \in {\cal K}({\cal R}) \setminus {\cal A}({\cal R})$.
	Therefore we can check that $X$ is in ${\cal K}({\cal R}) \setminus {\cal A}({\cal R})$
	by linear time membership algorithms for ${\cal K}({\cal R})$ and for ${\cal A}({\cal R})$.
	This gives a polynomial certificate for a NO instance.
\end{proof}
We have seen in Corollary~\ref{cor:A(R)=K(R)} a sufficient condition for ${\cal K}({\cal R}) = {\cal A}({\cal R})$.
Namely, if ${\cal R}$ corresponds to
the set of circuits of an antimatroid, 
then ${\cal K}({\cal R}) = {\cal A}({\cal R})$ holds.
By Dietrich's characterization of an antimatroid by circuits~\cite{Dietrich87}
(see \cite[Theorem 3.9]{KLS1991}) 
we can  determine in polynomial time 
whether ${\cal R}$ corresponds to the set of circuits of an antimatroid.
Also if all nontrivial (prime) implicates of ${\cal A}({\cal R})$
are given (e.g., by Algorithm~\ref{algo:resolution}), 
then we check whether ${\cal K}({\cal R}) = {\cal A}({\cal R})$.
But this approach never gives a polynomial time algorithm, since the number of 
all nontrivial (prime) implicates may be exponential in input size $l({\cal R})$.
Another approach is to find the base ${\cal B}$ of ${\cal K}({\cal R})$, 
and to use an algorithm of 
Eppstein, Falmagne, and Uzun~\cite{EFU09} for checking whether ${\cal B}$ 
is the base of an antimatroid. 
However $|{\cal B}|$ may be exponential of $l({\cal R})$.
See \cite[Section 3.6]{Wild14survey} 
for computational issues on prime implicants, bases, and rules (implications).

Adaricheva and Nation~\cite{AN14} introduced a notion of 
a {\em closure system with unique criticals}  ({\em UC-systems}), where 
a convex geometry (antimatroid) is a particular example of a UC-system; 
see also~\cite{Adaricheva12}. 
They showed a polynomial time  algorithm 
to decide whether a given set of rules defines a UC-system~\cite[Proposition 45]{AN04}.
This algorithm checks a necessary condition for ${\cal K}({\cal R})$ to be an antimatroid. 

\paragraph{Toward computational learning theory for antimatroids.}

Building an antimatroid by querying an expert 
in Section~\ref{sec:application} 
should also be discussed and analyzed 
from the view point of computational learning theory, 
particularly from Angluin's framework~\cite{Angluin87} of learning Boolean functions by queries; 
see a survey~\cite{SST10}. 
The problem formulation is the following.
The task is to identify ({\em learn}) a family ${\cal L}$ of subsets 
(or a Boolean function)
by a certain (logical) expression, such as CNF.
Here we are allowed to use a certain kind of an {\em oracle} that returns information 
of target ${\cal L}$.
Typical oracles are: 
\begin{description}
\item[Membership oracle:]  The query is a subset $X$. The oracle returns ``yes" if $X \in {\cal L}$, and ``no" otherwise.
\item[Equivalence oracle:] The query is a family ${\cal L}'$. The oracle returns ``yes" if ${\cal L}' = {\cal L}$, and ``no" otherwise.
If the answer is ``no", then a subset $X \in {\cal L} \triangle {\cal L}'$ is also returned.
\end{description}
There are several results on query learning of Horn functions, 
or equivalently, union-closed families.
Angluin, Frazier, and Pitt~\cite{AFP92} gave an algorithm to learn  
a Horn function (a union-closed family ${\cal L}$)
by $O(mn)$ membership and $O(m^2n)$ equivalence queries, 
where $n$ is the number of variables (the cardinality of the ground set $Q$) 
and $m$ is the number of clauses of the Horn formula (the number of a set ${\cal R}$ of rules with ${\cal L} = {\cal K}({\cal R})$). See also a recent related work~\cite{AB11}.
Frazier and Pitt~\cite{FP93} considered the entailment oracle for a Horn function, and gave an algorithm to learn  
a Horn function 
by a polynomial number of entailment and equivalence queries,
where the entailment oracle is:
\begin{description}
\item[Entailment oracle:]  The query is a rule $(A,q)$. The oracle returns ``yes" 
if $(A,q)$ is an implicate of ${\cal L}$, and ``no" otherwise.
\end{description}
The problem of building a space of knowledge states by querying an expert, considered in Section~\ref{sec:application}, may be formulated mathematically as the problem of 
learning a Horn function by the entailment oracle.
In our setting, it is practically impossible to let a human expert play the equivalence oracle, and
it may be difficult to apply these results to actual educational system designs. 
Nevertheless it is quite interesting   
to develop
a practically feasible and theoretically efficient learning algorithm for
spaces of knowledge states, particularly antimatroids, 
from the viewpoint of computational learning theory.
It should be noted that the above learning algorithms identify target 
space ${\cal L}$ by ${\cal L} = {\cal K}({\cal R})$.
In the case where ${\cal L}$ is assumed to be an antimatroid,
it is natural to identify ${\cal L}$ by ${\cal L} = {\cal A}({\cal R})$, 
as in our revised QUERY algorithm.
In this setting, an alternative learning algorithm 
with a better theoretical guarantee may be possible.

\paragraph{Largest extension of an antimatroid.}
Adaricheva and Nation~\cite{AN04} showed (in dual form) that
for any antimatroid ${\cal A}$ on $Q$
there exists a unique maximal antimatroid $\overline{\cal A}$ 
on $Q$ containing ${\cal A}$ as a sublattice, 
where $\overline{\cal A}$ is called  the {\em largest extension} of ${\cal A}$.
This suggests a way of associating a set $\cal R$ 
of rules with the largest extension $\overline{{\cal A}({\cal R})}$ of ${\cal A}({\cal R})$. 
A natural question is:
How can we handle $\overline{{\cal A}({\cal R})}$ by ${\cal R}$ efficiently ?
A naive membership algorithm for $\overline{{\cal A}({\cal R})}$ 
obtained from the definition \cite[p.199 (E)]{AN04}
requires checking a condition for all members of ${\cal A}({\cal R})$, 
and is far from polynomial. 
An algorithmic theory 
for association ${\cal R} \mapsto \overline{{\cal A}({\cal R})}$ 
as well as its application to KST
will deserve an interesting future research.

\section*{Acknowledgments}
We thank Jean-Paul Doignon for helpful comments and references~\cite{Doignon14ICFCA,Dowling93,K1998,Wild14} of KST, and  
the referees for helpful comments and references~\cite{AN04,AN14,BM10,KN13} of implicational systems. 
We also thank Marcel Wild for remarks.
The first author was partially supported by JSPS KAKENHI Grant Numbers 25280004, 26330023, 26280004.
The second author was partially supported by JSPS KAKENHI Grant Number 26330023 and 
the JST, ERATO, Kawarabayashi Large Graph Project.
The third author was partially supported by JSPS KAKENHI Grant Numbers 24106002, 26280001.　


\appendix
\section{Appendix}
\subsection{Relation to Horn functions}
We summarize a relation to Horn functions 
in Boolean function theory; see \cite[Chapter 6]{Booleanfunctions} for detail.
A {\em Boolean function} is a $\{0,1\}$-valued function $f$ defined on $\{0,1\}^n$, i.e.,  $f: \{0,1\}^n \to \{0,1\}$.
A family ${\cal K}$ on $Q = \{1,2,\ldots,n\}$ is identified with a Boolean function $f_{\cal K}$ 
defined by $f_{\cal K}(x_1,x_2,\ldots,x_n) = 1$ if $\{i \mid x_i = 1\} \in {\cal K}$ and $0$ otherwise.

A Boolean function $f$ is called {\em Horn} if it is represented as a {\em Horn CNF}:
\[
f(x_1,x_2,\ldots,x_n) = \bigwedge_{i=1}^m \left( 
\bigvee_{j \in P_i} x_j \bigvee_{j \in N_i} \bar x_j \right),
\]
where $P_i$ and $N_i$ are disjoint subsets of $\{1,2,\ldots,n\}$ with $|P_i| \leq 1$ for $i=1,2,\ldots,m$.
Here $\bar x_i = 1 - x_i$,  $x_i \vee x_j := \max \{x_i, x_j\}$, and $x_i \wedge x_j := \min \{x_i, x_j\}$.
For $A  = \{i_1,i_2,\ldots,i_k\}$ and $B = \{j_1,j_2,\ldots,j_l\}$,  
$\bigvee_{i \in A} x_j \bigvee_{j \in B} \bar x_j$ denotes $x_{i_1} \vee x_{i_2} \vee \cdots \vee x_{i_k} 
\vee \bar x_{j_1} \vee \cdots \vee \bar x_{j_l}$.

Suppose that $Q = \{1,2,\ldots,n\}$.
For a Boolean function $f$, 
let $T(f)$ denote the set of points $x \in \{0,1\}^n = 2^{Q}$ with $f(x) = 1$.
For a rule $(A,q)$ with $A = \{p_1,p_2,\ldots,p_k\}$, the Boolean function $f_{A,q}$ is defined by
\[
f_{A,q}(x_1,x_2,\ldots,x_n) = \bar x_{p_1} \vee \bar x_{p_2} \vee \cdots \vee \bar x_{p_k} \vee x_q.
\]
For a set ${\cal R}$ of rules, we obtain a Horn function $f_{\cal R}$ by
\begin{equation*}
f_{\cal R} := \bigwedge_{(A,q) \in {\cal R}} f_{A,q}
\end{equation*} 
Then the family ${\cal K}({\cal R})$ and the Horn function $f_{\cal R}$ are equivalent objects in the following sense.
For $x \in \{0,1\}^n$, let $I^0(x)$ denote the set of elements $i \in Q$ with $x_i = 0$ ($0$-supports).
\begin{lem}
	For a set ${\cal R}$ of rules, it holds
	$
	{\cal K}({\cal R}) = \{  I^0(x) \mid x \in T(f_{\cal R})\}.
	$
\end{lem}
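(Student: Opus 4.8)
The plan is to unwind both sides of the claimed identity and to observe that the map $x \mapsto I^0(x)$ is a bijection between $\{0,1\}^n$ and $2^Q$ which carries the true points of $f_{\cal R}$ exactly onto the members of ${\cal K}({\cal R})$. That $x \mapsto I^0(x)$ is a bijection is immediate, since its inverse sends a set $X \subseteq Q$ to the vector whose $i$-th coordinate is $0$ when $i \in X$ and $1$ otherwise. So it suffices to prove, for every $x \in \{0,1\}^n$ with $X := I^0(x)$, that $f_{\cal R}(x) = 1$ if and only if $X \in {\cal K}({\cal R})$.

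The core of the argument is a one-rule computation. Fix a rule $(A,q)$ with $A = \{p_1,\dots,p_k\}$, and let $X = I^0(x)$. By the definition of $I^0$, for each $i \in Q$ we have $x_i = 0$ (equivalently $\bar x_i = 1$) exactly when $i \in X$, and $x_i = 1$ exactly when $i \notin X$. Hence $f_{A,q}(x) = \bar x_{p_1} \vee \cdots \vee \bar x_{p_k} \vee x_q$ equals $1$ if and only if $p_j \in X$ for some $j$ or $q \notin X$, that is, if and only if $A \cap X \neq \emptyset$ or $q \notin X$. By contraposition, the latter is precisely the condition ``$q \in X$ implies $A \cap X \neq \emptyset$'', which is the definition of $(A,q)$ accepting $X$. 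Therefore $f_{A,q}(x) = 1$ if and only if $(A,q)$ accepts $X$.

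To finish, I would conjoin over the rules of ${\cal R}$: since $f_{\cal R} = \bigwedge_{(A,q) \in {\cal R}} f_{A,q}$, the value $f_{\cal R}(x)$ equals $1$ if and only if $f_{A,q}(x) = 1$ for every $(A,q) \in {\cal R}$, if and only if every rule of ${\cal R}$ accepts $X$, if and only if $X \in {\cal K}({\cal R})$. Transporting this equivalence through the bijection $x \leftrightarrow X = I^0(x)$ yields $\{\, I^0(x) \mid x \in T(f_{\cal R}) \,\} = {\cal K}({\cal R})$, as claimed. I do not anticipate any real obstacle here: the whole proof is a transcription of the De Morgan duality between the clause $f_{A,q}$ and the acceptance condition, and the only point demanding a little care is the sign convention — the true points of a Horn CNF correspond to $0$-supports, so the indicator vector involved is the complemented one.
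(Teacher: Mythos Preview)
Your proof is correct. The paper actually states this lemma without proof in the appendix, treating it as an immediate observation; your argument supplies exactly the natural definitional unwinding the authors are implicitly relying on, and there is nothing further to compare.
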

Notice that if we associate a rule $(A,q)$ $(A = \{p_1,p_2,\ldots,p_k\})$ 
with $x_{p_1} \vee x_{p_2} \vee \cdots \vee x_{p_k} \vee \bar x_q$, 
then a set ${\cal R}$ of rules corresponds to a {\em dual Horn function} $g_{\cal R}$, 
and ${\cal K}({\cal R})$ corresponds to the set of $1$-supports of $T(g_{\cal R})$.

In particular we can use known results of Horn functions to ${\cal K}({\cal R})$.
For example, deciding whether $(A,q)$ is an implicate for ${\cal K}({\cal R})$ 
is equivalent to deciding whether an arbitrary $x \not \in T(f_{A,q})$ satisfies $x \not \in T(f_{\cal R})$.
This is solved by the following. Fix variables $x_q = 0$ and $x_i = 1$ for $i \in A$. 
Substitute them to $f_{\cal R}$ and then obtain another Horn CNF $f'$.
If $f'$ is satisfiable (i.e., $\exists x \in T(f')$), then the answer is NO. 
Otherwise the answer is YES.
It is well-known that the satisfiability problem for Horn CNF is efficiently solved; 
see~\cite[Section 6.4.1]{Booleanfunctions}.

\subsection{Proof of Proposition~\ref{prop:AE}}
	(1).  For the set ${\cal C}$ of all circuits and the corresponding set ${\cal R}$ of rules, 
	it holds ${\cal A} = {\cal L}({\cal C}) = {\cal A}({\cal R}) \subseteq {\cal K}({\cal R})$ by Theorems~\ref{thm:rep_Horn_circuit} and \ref{thm:circuit}.
	This means that for any circuit $(C,r)$, 
	the rule $(C - r ,r )$ is actually an implicate of ${\cal A}$.
	We next show that $(C - r ,r )$ is prime.
	Let $a$ be an arbitrary element of $C - r$.
	By definition, 
	$C - a$ is free, 
	and hence there exists a member $K$ of $\mathcal{A}$ 
	such that $K\cap (C - a)=\{r\}$.
	Then $K$ contains $r$ and satisfies $K\cap ( C - a - r)=\emptyset$.
	This means that $(C - a -r, r)$ is not implicate of ${\cal A}$.
	Thus $(C - r ,r )$ is a prime implicate.

	(2). Next, suppose that $(A,q)$ is a prime implicate. 
	Namely, for any element $a$ of $A$, the rule $(A - a, q)$ is not an implicate of $\mathcal{A}$. 
	Notice that $A+q$ is not free,  
	since there is no member $K$ of ${\cal A}$ with $(A + q) \cap K = \{q\}$.
	Choose an arbitrary element $x$ of $A + q$.
	We are going show that $A + q - x$ is free; 
	then it follows that $(A + q, q)$ is a circuit by definition.
	Here  $\mathcal{A} \cap X$ denotes $\{X\cap K\mid K\in\mathcal{A}\}$ for simplicity.
	We first consider the case where $x=q$. Let $a\in A$. 
	By assumption, $(A - a,q)$ is not an implicate of $\mathcal{A}$. 
	Then there exists a member $K$ of $\mathcal{A}$ such that $q\in K$ and 
	$K \cap (A - a)=\emptyset$. But $K\cap A \neq \emptyset$, since $(A, q)$ is an implicate of $\mathcal{A}$. Therefore, $K\cap A=\{a\}$ and $\mathcal{A} \cap A$ contains $\{a\}$. 
	Since $\mathcal{A}$ is closed under union, so is $\mathcal{A} \cap A$. 
	It follows that $A + q - q=A$ is free.

	We next consider the case where $x\in A$. Let $a\in A - x$. By the same discussion as above, there exists a member $K$ of $\mathcal{A}$ such that $q\in K$ and $K\cap A=\{a\}$. We take a minimal such $K$. 
	By the minimality of $K$ and the accessibility of ${\cal A}$, 
	at least $K - a$ or $K - q$ must be a member of $\mathcal{A}$. 
	But $K - a$ is not a member of $\mathcal{A}$, 
	since $(A, q)$ is an implicate of $\mathcal{A}$ and 
	we have $q\in K - a$ and $(K - a)\cap A=\emptyset$. 
	Hence $K - q$ is a member of $\mathcal{A}$. 
	Since $(K - q)\cap (A - x + q)=K\cap (A- x)=\{a\}$, 
	the family $\mathcal{A} \cap (A - x+ q)$ contains $\{a\}$. 
	We show that $\mathcal{A} \cap (A - x + q)$ also contains $\{q\}$. Since $(A, q)$ is an implicate of $\mathcal{A}$ but $(A - x,q)$ is not, there exists a member $L$ of $\mathcal{A}$ such that $q \in L$ and $L\cap A=\{x\}$. 
	We have $L\cap (A - x + q)=\{q\}$. 
	Therefore, $\mathcal{A} \cap (A - x + q)$ contains all singletons $\{a\}$ 
	with $a\in A - x + q$. Since $\mathcal{A} \cap (A - x + q)$ is closed under union, 
	the set $A - x + q= A+q - x$ is free. 
	Thus we conclude that $(A + q, q)$ is a circuit.

\end{document}